\providecommand{\tensor}{\otimes}
\providecommand{\affiliation}{\address}
\providecommand{\acknowledgments}{\section*{Acknowledgements}}
\newcommand{\C}{\mathbb{C}}
\newcommand{\Z}{\mathbb{Z}}
\newcommand{\R}{\mathbb{R}}
\renewcommand{\P}{\mathbb{P}}
\newcommand{\cC}{\mathcal{C}}
\newcommand{\cK}{\mathcal{K}}
\newcommand{\cO}{\mathcal{O}}
\newcommand{\CAT}{\mathrm{CAT}}
\newcommand{\Gr}{\mathrm{Gr}}
\newcommand{\Hom}{\mathrm{Hom}}
\newcommand{\Inv}{\mathrm{Inv}}
\newcommand{\Irr}{\mathrm{Irr}}
\newcommand{\PGL}{\mathrm{PGL}}
\newcommand{\Sch}{\mathrm{Sch}}
\newcommand{\SL}{\mathrm{SL}}
\renewcommand{\top}{\mathrm{top}}
\newcommand{\pt}{{\mathrm{pt}}}
\newcommand{\cf}{\C_c}
\newcommand{\fg}{\mathfrak{g}}
\newcommand{\Gv}{{G^\vee}}
\newcommand{\alphav}{\alpha^\vee}
\newcommand{\rhov}{\rho^\vee}
\newcommand{\vlambda}{{\vec{\lambda}}}
\newcommand{\vgamma}{{\vec{\gamma}}}
\newcommand{\vmu}{{\vec{\mu}}}
\newcommand{\vnu}{{\vec{\nu}}}
\newcommand{\vp}{\vec{p}}
\renewcommand{\tensor}{\otimes}
\newcommand{\longto}{\longrightarrow}
\newcommand{\defeq}{\stackrel{\mathrm{def}}{=}}
\newcommand{\eps}{\epsilon}
\renewcommand{\bar}{\overline}
\newcommand{\braket}[1]{{\langle #1 \rangle}}
\newcommand{\twistedprod}{\tilde{\times}}
\renewcommand{\min}{\textrm{min}}
\newcommand{\concat}{\sqcup}
\newcommand{\hconv}{\mathrm{\mathbf{hconv}}}
\newcommand{\econv}{\mathrm{\mathbf{econv}}}
\newcommand{\fsp}{\mathrm{\mathbf{fsp}}}
\newcommand{\perv}{\mathrm{\mathbf{perv}}}
\newcommand{\rep}{\mathrm{\mathbf{rep}}}
\newcommand{\spd}{\mathrm{\mathbf{spd}}}
\newcommand{\vect}{\mathrm{\mathbf{vect}}}
\newcommand{\svect}{\mathrm{\mathbf{svect}}}
\newtheorem{theorem}{Theorem}[section]
\newtheorem{proposition}[theorem]{Proposition}
\newtheorem{lemma}[theorem]{Lemma}
\newtheorem{question}[theorem]{Question}
\newtheorem{corollary}[theorem]{Corollary}
\newtheorem{conjecture}[theorem]{Conjecture}
\newenvironment{fullfigure}[2]
    {\begin{figure}[htb]\begin{center}\def\fullfiga{#1}\def\fullfigb{#2}}
    {\vspace{\baselineskip}\caption{\fullfigb.}\label{\fullfiga}
    \end{center}\end{figure}}
\newenvironment{fullfigure*}[2]
    {\begin{figure*}[htb]\begin{center}\def\fullfiga{#1}\def\fullfigb{#2}}
    {\vspace{\baselineskip}\caption{\fullfigb.}\label{\fullfiga}
    \end{center}\end{figure*}}
\newcommand{\eq}[2]{\begin{equation}\label{#1}#2\end{equation}}
\newcommand{\cor}[1]{Corollary~\ref{#1}}
\newcommand{\conj}[1]{Conjecture~\ref{#1}}
\newcommand{\fig}[1]{Figure~\ref{#1}}
\newcommand{\prop}[1]{Proposition~\ref{#1}}
\renewcommand{\sec}[1]{Section~\ref{#1}}
\newcommand{\thm}[1]{Theorem~\ref{#1}}
\newcommand{\lem}[1]{Lemma~\ref{#1}}
\newcommand{\ie}{\emph{i.e.}}
\newcommand{\Ie}{\emph{I.e.}}
\colorlet{lightgray}{black!15!white}
\colorlet{darkgreen}{green!75!black}
\colorlet{darkblue}{blue!75!black}
\colorlet{darkred}{red!75!black}
\tikzset{midto/.style={postaction={decorate,
    decoration={markings,mark=at position .5 with
    {\draw (-.035,-.07) -- (.035,0) -- (-.035,.07);}}}}}
\tikzset{midfrom/.style={postaction={decorate,
    decoration={markings,mark=at position .5 with
    {\draw (.035,-.07) -- (-.035,0) -- (.035,.07);}}}}}
\tikzset{web/.style={darkblue,semithick}}
\begin{document}

\title{Buildings, spiders, and geometric Satake}
\author{Bruce Fontaine}
\affiliation{University of Toronto}
\author{Joel Kamnitzer}
\thanks{The first and second authors were partly supported by NSERC}
\affiliation{University of Toronto}
\author{Greg Kuperberg}
\thanks{The third author was partly supported by
    NSF grants DMS-0606795 and CCF-1013079}
\affiliation{University of California, Davis}

\begin{abstract}
Let $G$ be a simple algebraic group.  Labelled trivalent graphs called
webs can be used to product invariants in tensor products of minuscule
representations.  For each web, we construct a configuration space of points
in the affine Grassmannian.  Via the geometric Satake correspondence,
we relate these configuration spaces to the invariant vectors coming
from webs.  In the case $G =\SL(3)$, non-elliptic webs yield a basis for
the invariant spaces.  The non-elliptic condition, which is equivalent to
the condition that the dual diskoid of the web is $\CAT(0)$, is explained
by the fact that affine buildings are $\CAT(0)$.
\end{abstract}
\maketitle

\section{Introduction}

\subsection{Spiders}

Let $G$ be a simple, simply-connected complex algebraic group. In previous
work \cite{Kuperberg:spiders}, the third author defined a pivotal
tensor category with generators and relations called a ``spider", for
$G$ of rank 2.  (The term ``spider" was originally intended to mean
any pivotal category, but in common usage only these categories are
called spiders.)  The Karoubi envelope of this category is equivalent
to the category $\rep^u(G)$ of finite-dimensional representations of $G$
with a modified pivotal structure.  Actually, the spider comes with a
parameter $q$ making it equivalent to the quantum deformation $\rep^u_q(G)$.
These results in rank 2 are analogous to the influential result of Kauffman
\cite{Kauffman:spinknot} and Penrose \cite{Penrose:negative} that the Karoubi
envelope of the Temperley-Lieb category (the category of planar matchings)
is equivalent to $\rep^u_q(\SL(2))$.  The Temperley-Lieb category can thus
be called the $\SL(2)$ spider.  Conjectural generalizations of spiders
were proposed for $\SL(4)$ by Kim \cite{Kim:thesis} and for $\SL(n)$
by Morrison \cite{Morrison:diagram}.

In this article, for any $G$ as above, we will define the free spider for
$G$ generated by the minuscule representations of $G$.  A morphism in the
free spider is given by a (linear combination) of labelled trivalent graphs
called webs.  For each web $w$ with boundary edges labelled $\vlambda$,
there is an invariant vector
$$\Psi(w) \in \Inv(V(\vlambda)) = \Inv_G(V(\lambda_1) \tensor V(\lambda_2)
    \tensor \cdots \tensor V(\lambda_n)).$$

If $G$ has rank 1 or 2, then the vectors $\Psi(w)$ coming from non-elliptic
webs $w$ (those whose faces have non-positive combinatorial curvature)
form a basis of each invariant space $\Inv(V(\vlambda))$ of $G$, called
a web basis.  The web basis for $\SL(2)$ is well-known as the basis
of planar matchings and it is known to be the same as Lusztig's dual
canonical basis \cite{FK:canonical}.  On the other hand, the $\SL(3)$
web bases are eventually not dual canonical \cite{Kuperberg:notdual},
even though many basis vectors are dual canonical.

\subsection{Affine Grassmannians}

The goal of this article is to introduce a new geometric interpretation
of webs and spiders using the geometry of affine Grassmannians.

Let $\cO = \C[[t]]$ and $\cK = \C((t))$.  In order to study the
representation theory of $G$, we will consider the affine Grassmannian of
its Langlands dual group
$$\Gr = \Gr(\Gv) = \Gv(\cK)/\Gv(\cO).$$
The geometric Satake correspondence of Lusztig \cite{Lusztig:qanalog},
Ginzburg \cite{Ginzburg:loop}, and Mirkovi\'c-Vilonen \cite{MV:geometric}
will be our main tool in this article.

\begin{theorem}  The category of equivariant perverse sheaves on the affine
Grassmannian $\Gr$ is equivalent as a symmetric and pivotal tensor category
to the tensor category $\rep^u(G)$ of representations of $G$ with a modified
pivotal and symmetric structure.
\label{th:satake} \end{theorem}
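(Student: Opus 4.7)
The plan is to follow the Mirković--Vilonen strategy: equip $\perv_{\Gv(\cO)}(\Gr)$ with a symmetric monoidal structure and a fiber functor making it neutral Tannakian, then identify the Tannakian group with $G$.

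First I would construct the tensor product by convolution. Let $\pi \colon \Gr \twistedprod \Gr \to \Gr$ denote the convolution morphism, where $\Gr \twistedprod \Gr = \Gv(\cK) \times^{\Gv(\cO)} \Gr$. For $\mathcal{F}_1, \mathcal{F}_2 \in \perv_{\Gv(\cO)}(\Gr)$, the equivariance lets the external tensor product descend to a sheaf $\mathcal{F}_1 \,\tilde{\boxtimes}\, \mathcal{F}_2$ on the twisted product, and one sets $\mathcal{F}_1 \star \mathcal{F}_2 = \pi_*(\mathcal{F}_1 \,\tilde{\boxtimes}\, \mathcal{F}_2)$. The critical input is that $\pi$ restricts to a semi-small map on closures of $\Gv(\cO)$-orbits, which guarantees that the convolution of perverse sheaves stays perverse; this reduces to a dimension estimate for Schubert fibers via the Bott--Samelson resolution.

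Second, I would obtain the symmetric commutativity constraint from the Beilinson--Drinfeld Grassmannian $\Gr_{X^2}$ over a smooth curve $X$. Its fiber over $(x_1,x_2)$ with $x_1 \ne x_2$ is a product $\Gr_{x_1} \times \Gr_{x_2}$, visibly symmetric, while its fiber over the diagonal recovers the convolution space. Applying unipotent nearby cycles yields a canonical ``fusion'' isomorphism $\mathcal{F}_1 \star \mathcal{F}_2 \cong \mathcal{F}_2 \star \mathcal{F}_1$ that inherits the symmetry of the generic fiber. Third, I would introduce the fiber functor $H^*(\Gr, -)$, made monoidal by a K\"unneth formula on the generic fiber of the Beilinson--Drinfeld family compatible with nearby cycles. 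The essential refinement is a weight grading: intersecting with the semi-infinite orbits $S_\mu$ of a unipotent subgroup of $\Gv(\cK)$, one shows $H^*(\Gr, \mathcal{F}) = \bigoplus_\mu H^*_c(S_\mu, \mathcal{F})$ with each summand concentrated in a single cohomological degree, producing a canonical grading by the coweight lattice that is compatible with convolution.

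The principal obstacle, and the heart of the theorem, is identifying the Tannakian dual with $G$. Tannakian reconstruction produces an affine group scheme with a maximal torus given by the weight grading, so one must match root data. Simple perverse sheaves $\mathrm{IC}_\lambda$ are indexed by dominant coweights of $\Gv$, and their weight multiplicities are counted by Mirković--Vilonen cycles in $\Gr^\lambda \cap S_\mu$, which reproduce the Kazhdan--Lusztig formula for the weight multiplicities of $V(\lambda)$; this forces the root datum to be that of $G$. Finally, the pivotal and symmetric structures must be compared: Verdier duality induces a canonical pivotal structure on the perverse side, and tracing it through the equivalence produces precisely the modification indicated by the superscript $u$ in $\rep^u(G)$.
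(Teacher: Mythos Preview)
The paper does not actually prove this theorem; it states it as the geometric Satake correspondence of Lusztig, Ginzburg, and Mirkovi\'c--Vilonen and uses it as a black box throughout.  There is therefore no ``paper's own proof'' to compare against.  Your sketch is a reasonable outline of the Mirkovi\'c--Vilonen argument (convolution as tensor product via semismallness, commutativity via the Beilinson--Drinfeld fusion/nearby-cycles construction, the weight-graded fiber functor from semi-infinite orbits, and Tannakian reconstruction pinning down the root datum), and it would be appropriate as an expository summary of the cited references rather than as an original proof.

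One small point of contact: while the paper does not prove \thm{th:satake}, it does engage with the pivotal-structure clause later, in the proof of \thm{th:convperv}.  There the sign discrepancy between $\rep(G)$ and $\rep^u(G)$ is verified concretely by computing $d_\lambda \circ b_\lambda$ as the self-intersection of $\Gr(\lambda)$ inside a neighborhood isomorphic to $T^*\Gr(\lambda)$, obtaining $(-1)^{\langle 2\lambda,\rhov\rangle}\dim V(\lambda)$.  Your final paragraph gestures at this (``Verdier duality induces a canonical pivotal structure \ldots produces precisely the modification indicated by the superscript $u$''), but if you want to match the level of precision the paper eventually provides in its own setting, you would need to say explicitly which sign arises and why, rather than asserting that tracing through gives the right answer.
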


As a consequence of this theorem, every invariant space $\Inv(V(\vlambda))$
for every $G$ can be constructed from the geometry of $\Gr$.  Given a
vector $\vlambda$ of dominant weights of $G$, there is a convolution morphism
$$m_\vlambda:\bar{\Gr(\vlambda)} = \bar{\Gr(\lambda_1)} \twistedprod
    \bar{\Gr(\lambda_2)} \twistedprod \cdots \twistedprod \bar{\Gr(\lambda_n)}
    \longto \Gr,$$
where each $\Gr(\lambda)$ is a sphere of radius $\lambda$ (in the sense of
weight-valued distances \cite{KLM:generalized}) in $\Gr$.  The fibre
$F(\vlambda) = m_\vlambda^{-1}(t^0)$ is a projective variety that we call
the Satake fibre.  In particular, we will use the following corollary of
the geometric Satake correspondence.

\begin{theorem} Every invariant space in $\rep^u(G)$ is canonically
isomorphic to the top homology of the corresponding geometric Satake fibre
with complex coefficients:
$$\Phi:\Inv(V(\vlambda)) \cong H_\top(F(\vlambda),\C).$$
Each top-dimensional component $Z \subseteq F(\vlambda)$ thus yields
a vector $[Z] \in \Inv(V(\vlambda))$.  These vectors form a basis, the
Satake basis.
\label{th:satbasis} \end{theorem}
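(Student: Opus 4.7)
The plan is to deduce Theorem~\ref{th:satbasis} from Theorem~\ref{th:satake} by translating the invariant space into a stalk computation on the affine Grassmannian. Under the Satake equivalence, the trivial representation corresponds to the skyscraper perverse sheaf $\delta_{t^0}$ supported at the basepoint, while each $V(\lambda_i)$ corresponds to the intersection cohomology sheaf $IC_{\lambda_i}$ on $\bar{\Gr(\lambda_i)}$. Since tensor product matches convolution of perverse sheaves, and convolution is computed via the twisted product and the proper morphism $m_\vlambda$, the tensor product $V(\vlambda)$ corresponds to $(m_\vlambda)_* IC_{\bar{\Gr(\vlambda)}}$. Applying the Satake functor to the invariant space and using the adjunction for skyscrapers therefore gives
\[
\Inv(V(\vlambda)) \;\cong\; \Hom\bigl(\delta_{t^0},\,(m_\vlambda)_* IC_{\bar{\Gr(\vlambda)}}\bigr),
\]
and proper base change along $m_\vlambda$ identifies the right-hand side with the perverse stalk of the pushforward at $t^0$, i.e.\ the hypercohomology of $IC_{\bar{\Gr(\vlambda)}}|_{F(\vlambda)}$ in the appropriate degree.

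The crucial geometric input is the semi-smallness of the convolution morphism: the dimension estimate $\dim F(\vlambda) \leq \tfrac{1}{2}\dim \bar{\Gr(\vlambda)}$, together with its stratum-by-stratum generalization, forces the hypercohomology above to be concentrated in a single cohomological degree, where it agrees with the top Borel--Moore homology of $F(\vlambda)$. Because $F(\vlambda)$ is closed in the projective variety $\bar{\Gr(\vlambda)}$, it is itself projective, so Borel--Moore and singular homology coincide. This yields the canonical isomorphism $\Phi\colon \Inv(V(\vlambda)) \cong H_\top(F(\vlambda),\C)$.

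For the basis assertion, the top singular homology of any complex algebraic variety has a canonical basis of fundamental classes $[Z]$, one for each top-dimensional irreducible component. Transporting this basis back through $\Phi$ produces the Satake basis of $\Inv(V(\vlambda))$.

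The main obstacle is the semi-smallness and parity step in the middle paragraph. This is not a formal consequence of Theorem~\ref{th:satake} but rather sits at the heart of its proof: it is exactly the Mirkovi\'c--Vilonen dimension estimate for intersections of spherical Schubert varieties with semi-infinite orbits, applied to the iterated convolution. In a self-contained treatment this would be the technical core of the argument, but here it is imported from the geometric Satake literature together with the equivalence itself.
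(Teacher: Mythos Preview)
Your argument is correct. You deduce the isomorphism directly: apply the Satake equivalence to turn $\Inv(V(\vlambda))$ into a $\Hom$ from the skyscraper at $t^0$ to the convolution pushforward, then use adjunction and semi-smallness of $m_\vlambda$ to identify that $\Hom$ with the top (Borel--Moore, hence singular) homology of the fibre. This is exactly the specialization of Ginzburg's theorem \cite[Thm.~8.6.7]{CG:complex} to the case where one of the two semismall maps is the inclusion of the base point.

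The paper takes a different, more structural route. Rather than computing this single $\Hom$ space, it builds an entire pivotal category $\hconv(\Gr)$ whose morphism spaces are the top homologies $H_\top(Z(\vlambda,\vmu))$ of polygon configuration spaces, with composition given by the homological convolution product. Ginzburg's theorem (quoted as \thm{th:ginzburg}) is invoked once, in full strength, to identify $\hconv(\Gr)$ with $\perv(\Gr)_\min$ as a category; combining with geometric Satake yields \thm{th:convperv}. Theorem~\ref{th:satbasis} then drops out as the special case $\Hom_{\hconv(\Gr)}(\Gr(\emptyset),\Gr(\vlambda)) = H_\top(F(\vlambda))$.

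The underlying analytic input is the same semi-smallness estimate you name; the difference is packaging. Your direct approach is shorter and self-contained for this one statement. The paper's categorical detour is longer here but pays off later: the convolution category $\hconv(\Gr)$ is precisely the target of the functor $\Psi$ from the free spider, and the tensor and pivotal structures on $\hconv(\Gr)$ are what make the proof of \thm{th:homclass2} go through. One small remark: the paper's derivation via $\hconv(\Gr)$ is literally only for minuscule $\vlambda$ (since that is where $\Gr(\vlambda)$ is smooth and closed), whereas your stalk argument, using $IC_{\bar{\Gr(\vlambda)}}$ rather than a shifted constant sheaf, covers the general dominant case stated in Theorem~\ref{th:satbasis}.
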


A goal of this article is to understand how the invariant vectors coming
from webs expand in this basis.  (Throughout, we will assume complex
coefficients for homology and cohomology.)

\subsection{Diskoids}

The orbits of $G(\cK)$ on the affine Grassmannian defines a notion
of distance on $\Gr$ with values in the set of dominant weights for
$G$.  Thus, we can interpret $F(\vlambda)$ as the (contractive, based)
configuration space in $\Gr$ of an abstract polygon $P(\vlambda)$ whose
side lengths are
$$\vlambda = (\lambda_1,\lambda_2,\ldots,\lambda_n).$$
One of our ideas is to generalize this type of configuration space
from polygons to diskoids.  For us, a diskoid $D$ is a contractible
piecewise linear region in the plane; in many cases it is a disk.  (See
\sec{s:config}.)  If $D$ is tiled by polygons and its edges are labelled
by dominant weights, then its vertices are a weight-valued metric space.
We will define a (based) configuration space $Q(D)$ which consists of maps
from the vertices of $D$ to $\Gr$ that preserves the lengths of edges
of $D$.  We will also define a special subset $Q_g(D)$ that consists of
maps that preserve all distances (globally isometric embeddings).

Assume that $\vlambda$ is a vector of minuscule highest weights.  If $w$
is a web with boundary $\vlambda$, then it has a dual diskoid $D = D(w)$
(or possibly a diskoid with bubbles).  The boundary of this diskoid is a
polygon $P(\vlambda)$ and so we get a map of configuration spaces $\pi:Q(D)
\to F(\vlambda)$.  Our first main result is that we can recover the vector
$\Psi(w)$ using this geometry.

\begin{theorem} There exists a homology class $c(w) \in H_*(Q(D))$ such
that $\pi_*(c(w)) \in H_{\top}(F(\vlambda))$ corresponds to $\Psi(w)$
under the isomorphism from \thm{th:satbasis}.
\label{th:homclass} \end{theorem}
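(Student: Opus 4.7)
The strategy is to define $c(w)$ by induction on the number of internal faces of the dual diskoid $D$, mirroring the inductive construction of $\Psi(w)$ from the generating trivalent vertices of the free spider. In parallel with the construction, I would verify the compatibility $\pi_*(c(w)) \leftrightarrow \Psi(w)$ under the isomorphism from \thm{th:satbasis}.

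For the base case, take $w$ to be a single trivalent vertex with minuscule labels $\mu_1, \mu_2, \mu_3$ admitting an invariant. The dual diskoid $D$ is a triangle, and $Q(D)$ coincides (up to basepoint normalization) with the triple Satake fibre $F(\mu_1, \mu_2, \mu_3)$. Because the labels are minuscule, the space $\Inv(V(\mu_1) \tensor V(\mu_2) \tensor V(\mu_3))$ is at most one-dimensional, and correspondingly $F(\mu_1, \mu_2, \mu_3)$ has at most one top-dimensional component $Z$. I set $c(w) = [Z]$. By \thm{th:satbasis}, $\pi_*(c(w))$ corresponds to the canonical generator of the invariant space, which is exactly the vector the free spider assigns to a trivalent generator.

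For the inductive step, suppose $w$ is obtained by composing two smaller webs $w_1$ and $w_2$ along a shared boundary arc, so that $D = D_1 \cup D_2$ meet along a common internal edge-path. Then $Q(D)$ should realize as a fibre product $Q(D_1) \times_Y Q(D_2)$, where $Y$ is the configuration space of the shared edge-path (a small convolution fibre of its own). I define $c(w)$ as the cross-product class built from $c(w_1)$ and $c(w_2)$ along $Y$. The key input is the monoidal structure of geometric Satake: tensor products of $\rep^u(G)$ translate to convolution in $\bar{\Gr(\lambda_1)} \twistedprod \cdots \twistedprod \bar{\Gr(\lambda_n)}$, and tensor contractions between invariants translate to push-pull along the corresponding fibre diagrams of Satake fibres. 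A direct diagram chase combining the inductive hypothesis applied to $w_1$ and $w_2$ with the functoriality of $\Phi$ yields $\pi_*(c(w)) \leftrightarrow \Psi(w)$.

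The main obstacle will be making the fibre-product description of $Q(D)$ precise and matching push-pull along convolution in $\Gr$ with composition of invariants in $\rep^u(G)$; in particular, one must unpack the twisted product carefully enough that the exterior class $c(w_1) \times c(w_2)$ actually restricts to a well-defined cycle on the fibre product $Q(D)$ of the expected degree. A secondary complication is the case of diskoids with bubbles, where the pivotal and symmetric structures enter nontrivially and one must track basepoints and dimension counts so that $\pi_*(c(w))$ still lands in top homology. Once these two bookkeeping issues are resolved, the inductive argument should run cleanly because every piece of the construction on both the web side and the Grassmannian side is built by the same operations: tensor product/convolution for disjoint union, and contraction/fibre product for gluing.
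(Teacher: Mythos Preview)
Your overall strategy---decompose the web into elementary pieces, assign to each piece a homology class, and assemble via a convolution/fibre-product operation---is the same as the paper's. The paper, however, organizes the decomposition linearly rather than recursively: it places $w$ in Morse position so that horizontal slices cut it into a sequence of elementary webs $w_1,\ldots,w_m$, each a single cup, cap, or trivalent vertex. Each $w_k$ determines an explicit component $X_k$ of the correspondence variety $Z(\vlambda^{(k-1)},\vlambda^{(k)})$, and $\Psi(w)=[X_m]*\cdots*[X_1]$ is then a multiconvolution computed inside the ambient smooth product $\Gr(\vlambda^{(0)})\times\cdots\times\Gr(\vlambda^{(m)})$. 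The class $c(w)$ is defined as the refined intersection product with support $[\pi_{0,1}^{-1}(X_1)]\cap\cdots\cap[\pi_{m-1,m}^{-1}(X_m)]$, which lives in $H_{2d(\vlambda)}(Q(D))$ by construction, and $Q(D)$ is identified with the set-theoretic intersection.

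The gap in your proposal is precisely the step you flag as the ``main obstacle'': there is no exterior or cross-product operation that takes $c(w_1)\in H_*(Q(D_1))$ and $c(w_2)\in H_*(Q(D_2))$ to a class on the fibre product $Q(D_1)\times_Y Q(D_2)$ in the correct degree. A K\"unneth-type cross product exists for Cartesian products, not fibre products; what is actually needed is a refined intersection or Gysin construction, and making that work is the entire technical content of the argument. The paper handles this by invoking Ginzburg's convolution formalism (the intersection product with support relative to an ambient smooth manifold, as in \cite[Sec.~2.6--2.7]{CG:complex} and \cite[Sec.~8.2]{Fulton:intersect}), together with the equivalence $\hconv(\Gr)\cong\perv(\Gr)_\min$ (\thm{th:convperv}) which guarantees that this homological convolution matches composition of perverse-sheaf morphisms and hence composition in $\rep^u(G)_\min$. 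Without that machinery, your inductive step does not go through: you cannot simply ``restrict'' an exterior class to the fibre product and expect it to land in the right degree, nor can you verify compatibility with $\Psi$ by a diagram chase alone. Your base case and the informal shape of the induction are fine, but the glue is exactly the refined intersection product, and that is what the paper supplies.
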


We prove this theorem as an application of the geometric Satake
correspondence.  In many cases, the class $c(w)$ is the fundamental class
of $Q(D)$, so that the coefficients of $\pi_*(c(w))$ (and hence $\Psi(w)$)
in the Satake basis are just the degrees of the map $\pi$ over the components
of $F(\vlambda)$.

\subsection{Buildings}

The affine Grassmannian $\Gr$ embeds isometrically into the affine
building $\Delta = \Delta(\Gv)$.  We can use this perspective to gain
greater insight into the variety $Q(D)$.

If $G = \SL(2)$, then a basis web is a planar matching (or cup diagram)
and its dual diskoid $D$ is a finite tree.  The affine Grassmannian $\Gr$
is the set of vertices of the affine building $\Delta$, which is an infinite
tree with infinite valence.  The configuration space $Q(D)$ is the space
of colored, based simplicial maps $f:D \to \Delta$; see \fig{f:a1example}.
It is known that
$$Q(D) = \P^1 \twistedprod \P^1 \twistedprod \cdots \twistedprod \P^1$$
is a twisted product of $\P^1$'s, and that these twisted products are
the components of the Satake fibre $F(\vlambda)$.  Moreover, $Q_g(D)$ is
the open dense subvariety of points in $Q(D)$ which are contained in no
other component of $F(\vlambda)$.  \fig{f:a1example} is an illustration
of the construction.

\begin{fullfigure*}{f:a1example}{From a non-elliptic $A_1$ web, to a tree,
    to part of an affine $A_1$ building}
\begin{tikzpicture}[baseline]
\draw (0,0) circle (1);
\foreach \th in {0,3,5} {
    \draw[web] (\th*45+67.5:1) .. controls (\th*45+67.5:.6)
        and (\th*45+112.5:.6) .. (\th*45+112.5:1); };
    \draw[web] (22.5:1) .. controls (22.5:.5)
        and (157.5:.5) .. (157.5:1);
\end{tikzpicture}
{\large $\longto$}
\begin{tikzpicture}[baseline]
\coordinate (a) at (0,0); \coordinate (a0) at (90:.75);
\coordinate (a1) at (210:.75); \coordinate (a2) at (330:.75);
\coordinate (a00) at (90:1.5);
\draw (a00) -- (a) -- (a1); \draw (a) -- (a2);
\fill (a) circle (.075); \fill (a00) circle (.075);
\draw[fill=white] (a0) circle (.075);
\draw[fill=white] (a1) circle (.075);
\draw[fill=white] (a2) circle (.075);
\end{tikzpicture}
{\large $\longto$}
\begin{tikzpicture}[baseline]
\coordinate (a) at (0,0);
\foreach \x in {0,1,2} {
    \coordinate (a\x) at (120*\x+90:.75);
    \foreach \y in {0,1} {
        \coordinate (a\x\y) at (120*\x+60*\y+60:1.5);
        \foreach \z in {0,1} {
            \coordinate (a\x\y\z) at (120*\x+60*\y+30*\z+45:2.25);
            \foreach \w in {0,1} {
                \coordinate (a\x\y\z\w) at (120*\x+60*\y+30*\z+10*\w+40:2.5);
}; }; }; };
\foreach \color/\r in {darkblue/.25,white/.225}
{
    \path[fill=\color] (a) circle (\r);
    \path[fill=\color] (a0) circle (\r);
    \path[fill=\color] (a1) circle (\r);
    \path[fill=\color] (a2) circle (\r);
    \path[fill=\color] (a00) circle (\r);
    \path[draw=\color,line width=\r*2cm] (a0) -- (a00);
    \path[draw=\color,line width=\r*2cm] (a) -- (a0);
    \path[draw=\color,line width=\r*2cm] (a) -- (a1);
    \path[draw=\color,line width=\r*2cm] (a) -- (a2);
}
\fill[white] (a00) circle (.225);
\fill (a) circle (.075);
\foreach \x in {0,1,2} {
    \foreach \r in {2.75,3,3.25} { \fill (120*\x+90:\r) circle (.035); };
    \draw (a) -- (a\x);
    \foreach \y in {0,1} {
        \draw (a\x) -- (a\x\y);
        \fill (a\x\y) circle (.075);
        \foreach \z in {0,1} {
            \draw (a\x\y) -- (a\x\y\z);
            \foreach \w in {0,1} {
                \draw (a\x\y\z) -- (a\x\y\z\w); };
            \draw[fill=white] (a\x\y\z) circle (.075); }; };
    \draw[fill=white] (a\x) circle (.075); };
\end{tikzpicture}
\end{fullfigure*}

\begin{fullfigure*}{f:a2example}{From a non-elliptic $A_2$ web, to a $\CAT(0)$
    diskoid, to part of an affine $A_2$ building}
\begin{tikzpicture}[baseline]
\foreach \t in {0,90,180,270} {
    \draw[web,midto] (\t+45:.75) -- (\t:.75);
    \draw[web,midto] (\t-45:.75) -- (\t:.75);
    \draw[web,midto] (\t:1.3) -- (\t:.75);
    \draw[web,midto] (\t+45:.75) -- (\t+45:1.3); };
\draw[web,midto] (30:1.6) -- (45:1.3);
\draw[web,midto] (60:1.6) -- (45:1.3);
\draw[black] plot[smooth cycle] coordinates{(0:1.3) (30:1.6) (45:1.7)
    (60:1.6) (90:1.3) (112.5:1.3) (135:1.3) (157.5:1.3) (180:1.3) (202.5:1.3)
    (225:1.3) (247.5:1.3) (270:1.3) (292.5:1.3) (315:1.3) (337.5:1.3)};
\end{tikzpicture}
{\large $\longto$}
\begin{tikzpicture}[baseline]
\draw[fill=lightgray] (22.5:1) -- (45:1.6) -- (67.5:1);
\foreach \t in {0,45,...,315} {
    \fill[lightgray] (0,0) -- (\t+22.5:1) -- (\t-22.5:1); };
\foreach \t in {0,45,...,315} {
    \draw (0,0) -- (\t+22.5:1) -- (\t-22.5:1); };
\fill[blue] (45:1.6) circle (.07);
\fill[blue] (0,0) circle (.07);
\foreach \t in {0,90,180,270} {
    \fill[red] (\t-22.5:1) circle (.07);
    \fill[darkgreen] (\t+22.5:1) circle (.07); };
\end{tikzpicture}
{\large $\longto$}
\begin{tikzpicture}[baseline]
\coordinate (p31) at (1.861,0.589); \coordinate (p27) at (-0.433,0.422);
\coordinate (p05) at (1.926,0.997); \coordinate (p12) at (1.974,-1.439);
\coordinate (p09) at (-1.926,1.514); \coordinate (p28) at (0.816,-0.892);
\coordinate (p17) at (1.315,1.641); \coordinate (p21) at (-0.448,-2.385);
\coordinate (p10) at (1.131,-0.561); \coordinate (p15) at (-1.974,-1.073);
\coordinate (p25) at (0.448,-1.274); \coordinate (p08) at (-1.592,0.870);
\coordinate (p01) at (0.000,0.000); \coordinate (p33) at (-1.861,-0.756);
\coordinate (p14) at (-2.309,-0.429); \coordinate (p19) at (0.000,-1.450);
\coordinate (p20) at (-1.362,-1.716); \coordinate (p18) at (-0.269,1.254);
\coordinate (p04) at (1.592,-0.703); \coordinate (p32) at (-0.612,-2.070);
\coordinate (p23) at (0.269,2.406); \coordinate (p13) at (-1.131,-1.113);
\coordinate (p29) at (0.414,0.653); \coordinate (p06) at (-0.653,1.315);
\coordinate (p35) at (-0.816,1.059); \coordinate (p22) at (-1.315,2.019);
\coordinate (p30) at (0.612,-0.442); \coordinate (p16) at (0.000,1.450);
\coordinate (p02) at (0.653,0.359); \coordinate (p11) at (2.309,0.261);
\coordinate (p24) at (1.362,-1.944); \coordinate (p26) at (-0.414,-0.653);
\coordinate (p07) at (-1.545,0.175); \coordinate (p03) at (1.545,-0.175);
\coordinate (p34) at (0.433,2.090);
\fill[lightgray,nearly opaque] (p27) -- (p02) -- (p26);
\draw[semithick] (p27) -- (p02) -- (p26) -- cycle;
\fill[blue] (p27) circle (.07);
\fill[lightgray,nearly opaque] (p28) -- (p02) -- (p26);
\draw[semithick] (p28) -- (p02) -- (p26) -- cycle;
\fill[blue] (p28) circle (.07);
\fill[lightgray,nearly opaque] (p18) -- (p02) -- (p16);
\draw[semithick] (p18) -- (p02) -- (p16) -- cycle;
\fill[blue] (p18) circle (.07);
\fill[lightgray,nearly opaque] (p04) -- (p02) -- (p03);
\draw[semithick] (p04) -- (p02) -- (p03) -- cycle;
\fill[blue] (p04) circle (.07);
\fill[lightgray,nearly opaque] (p17) -- (p02) -- (p16);
\draw[semithick] (p17) -- (p02) -- (p16) -- cycle;
\fill[blue] (p17) circle (.07);
\fill[lightgray,nearly opaque] (p33) -- (p13) -- (p26);
\draw[semithick] (p33) -- (p13) -- (p26) -- cycle;
\fill[blue] (p33) circle (.07);
\fill[lightgray,nearly opaque] (p05) -- (p02) -- (p03);
\draw[semithick] (p05) -- (p02) -- (p03) -- cycle;
\fill[blue] (p05) circle (.07);
\fill[lightgray,nearly opaque] (p01) -- (p02) -- (p26);
\draw[semithick] (p01) -- (p02) -- (p26) -- cycle;
\fill[lightgray,nearly opaque] (p32) -- (p13) -- (p26);
\draw[semithick] (p32) -- (p13) -- (p26) -- cycle;
\fill[blue] (p32) circle (.07);
\fill[lightgray,nearly opaque] (p01) -- (p02) -- (p16);
\draw[semithick] (p01) -- (p02) -- (p16) -- cycle;
\fill[lightgray,nearly opaque] (p01) -- (p02) -- (p03);
\draw[semithick] (p01) -- (p02) -- (p03) -- cycle;
\fill[darkgreen] (p02) circle (.07);
\fill[lightgray,nearly opaque] (p01) -- (p13) -- (p26);
\draw[semithick] (p01) -- (p13) -- (p26) -- cycle;
\fill[red] (p26) circle (.07);
\fill[lightgray,nearly opaque] (p14) -- (p13) -- (p07);
\draw[semithick] (p14) -- (p13) -- (p07) -- cycle;
\fill[blue] (p14) circle (.07);
\fill[lightgray,nearly opaque] (p22) -- (p06) -- (p16);
\draw[semithick] (p22) -- (p06) -- (p16) -- cycle;
\fill[blue] (p22) circle (.07);
\fill[lightgray,nearly opaque] (p01) -- (p06) -- (p16);
\draw[semithick] (p01) -- (p06) -- (p16) -- cycle;
\fill[lightgray,nearly opaque] (p21) -- (p13) -- (p19);
\draw[semithick] (p21) -- (p13) -- (p19) -- cycle;
\fill[blue] (p21) circle (.07);
\fill[lightgray,nearly opaque] (p01) -- (p13) -- (p07);
\draw[semithick] (p01) -- (p13) -- (p07) -- cycle;
\fill[lightgray,nearly opaque] (p23) -- (p06) -- (p16);
\draw[semithick] (p23) -- (p06) -- (p16) -- cycle;
\fill[red] (p16) circle (.07); \fill[blue] (p23) circle (.07);
\fill[lightgray,nearly opaque] (p01) -- (p13) -- (p19);
\draw[semithick] (p01) -- (p13) -- (p19) -- cycle;
\fill[lightgray,nearly opaque] (p01) -- (p10) -- (p03);
\draw[semithick] (p01) -- (p10) -- (p03) -- cycle;
\fill[lightgray,nearly opaque] (p12) -- (p10) -- (p03);
\draw[semithick] (p12) -- (p10) -- (p03) -- cycle;
\fill[blue] (p12) circle (.07);
\fill[lightgray,nearly opaque] (p09) -- (p06) -- (p07);
\draw[semithick] (p09) -- (p06) -- (p07) -- cycle;
\fill[blue] (p09) circle (.07);
\fill[lightgray,nearly opaque] (p01) -- (p06) -- (p07);
\draw[semithick] (p01) -- (p06) -- (p07) -- cycle;
\fill[lightgray,nearly opaque] (p15) -- (p13) -- (p07);
\draw[semithick] (p15) -- (p13) -- (p07) -- cycle;
\fill[blue] (p15) circle (.07);
\fill[lightgray,nearly opaque] (p11) -- (p10) -- (p03);
\draw[semithick] (p11) -- (p10) -- (p03) -- cycle;
\fill[red] (p03) circle (.07); \fill[blue] (p11) circle (.07);
\fill[lightgray,nearly opaque] (p20) -- (p13) -- (p19);
\draw[semithick] (p20) -- (p13) -- (p19) -- cycle;
\fill[darkgreen] (p13) circle (.07); \fill[blue] (p20) circle (.07);
\fill[lightgray,nearly opaque] (p01) -- (p10) -- (p19);
\draw[semithick] (p01) -- (p10) -- (p19) -- cycle;
\fill[lightgray,nearly opaque] (p01) -- (p06) -- (p29);
\draw[semithick] (p01) -- (p06) -- (p29) -- cycle;
\fill[lightgray,nearly opaque] (p24) -- (p10) -- (p19);
\draw[semithick] (p24) -- (p10) -- (p19) -- cycle;
\fill[blue] (p24) circle (.07);
\fill[lightgray,nearly opaque] (p01) -- (p10) -- (p29);
\draw[semithick] (p01) -- (p10) -- (p29) -- cycle;
\fill[blue] (p01) circle (.07);
\fill[lightgray,nearly opaque] (p08) -- (p06) -- (p07);
\draw[semithick] (p08) -- (p06) -- (p07) -- cycle;
\fill[red] (p07) circle (.07); \fill[blue] (p08) circle (.07);
\fill[lightgray,nearly opaque] (p34) -- (p06) -- (p29);
\draw[semithick] (p34) -- (p06) -- (p29) -- cycle;
\fill[blue] (p34) circle (.07);
\fill[lightgray,nearly opaque] (p25) -- (p10) -- (p19);
\draw[semithick] (p25) -- (p10) -- (p19) -- cycle;
\fill[red] (p19) circle (.07); \fill[blue] (p25) circle (.07);
\fill[lightgray,nearly opaque] (p31) -- (p10) -- (p29);
\draw[semithick] (p31) -- (p10) -- (p29) -- cycle;
\fill[blue] (p31) circle (.07);
\fill[lightgray,nearly opaque] (p35) -- (p06) -- (p29);
\draw[semithick] (p35) -- (p06) -- (p29) -- cycle;
\fill[darkgreen] (p06) circle (.07); \fill[blue] (p35) circle (.07);
\fill[lightgray,nearly opaque] (p30) -- (p10) -- (p29);
\draw[semithick] (p30) -- (p10) -- (p29) -- cycle;
\fill[darkgreen] (p10) circle (.07); \fill[red] (p29) circle (.07);
\fill[blue] (p30) circle (.07);
\foreach \th in {90,210,330} {
    \foreach \r in {2.8,3,3.2} { \fill (\th:\r) circle (.03); }; };
\end{tikzpicture}
\end{fullfigure*}

Our other main results are a generalization of this fact to $G = \SL(3)$.
In this case, $\Gr$ is again the vertex set of $\Delta$.  If $w$ is a
non-elliptic web with boundary $\vlambda$, then $Q(D(w))$ is again the space
of colored, based simplicial maps $f:D \to \Delta$, as in \fig{f:a2example}.
Then:

\begin{theorem} Let $G = \SL(3) = A_2$ and let $w$ be a non-elliptic web
with minuscule boundary $\vlambda$ and dual diskoid $D$.  Then the global
isometry configuration space $Q_g(D)$ is mapped isomorphically by $\pi$
to a dense subset of a component of the Satake fibre $F(\vlambda)$.
This inclusion yields a bijection between non-elliptic webs and the
components of $F(\vlambda)$.
\label{th:main} \end{theorem}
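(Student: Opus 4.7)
The plan is to combine the $\CAT(0)$ geometry of the non-elliptic dual diskoid $D$ and of the affine $A_2$ building $\Delta = \Delta(\Gv)$ with \thm{th:homclass} and the fact, established in \cite{Kuperberg:spiders}, that the non-elliptic web vectors $\{\Psi(w)\}$ form a basis of $\Inv(V(\vlambda))$.

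First I would prove that $\pi|_{Q_g(D)}$ is an open immersion into $F(\vlambda)$. A point of $Q_g(D)$ is a based, color-preserving, simplicial map $f:D \to \Delta$ that is a global isometry on vertices. Because $D$ is a $\CAT(0)$ diskoid (where the non-elliptic hypothesis enters) and $\Delta$ is $\CAT(0)$, the interior vertices of $D$ can be reconstructed from the boundary data by a median-like construction in the building: each trivalent interior vertex adjacent to three previously-placed vertices is forced into a unique position that depends algebraically on those positions. Peeling $D$ inward from the boundary in this manner reconstructs $f$ from $\pi(f)$, giving both injectivity of $\pi|_{Q_g(D)}$ and an algebraic inverse on its image.

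The same inductive peeling exhibits $Q_g(D)$ as an iterated $\P^2$- or $\P^1$-bundle over the configuration space of a smaller diskoid, paralleling the $\SL(2)$ description via twisted products of $\P^1$'s in \fig{f:a1example}. This shows $Q_g(D)$ is irreducible and that $\dim Q_g(D)$ equals the dimension of a top component of $F(\vlambda)$; hence $\pi(Q_g(D))$ is dense in a single such component, which we call $Z_w$. To upgrade $w \mapsto Z_w$ to a bijection I would apply \thm{th:homclass}: under the Satake isomorphism $\Psi(w) = \pi_\ast(c(w))$, and because $\pi|_{Q_g(D)}$ is an isomorphism onto a dense open of $Z_w$, the $[Z_w]$-coefficient of $\Psi(w)$ in the Satake basis is $1$. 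The number of non-elliptic webs with boundary $\vlambda$ equals $\dim \Inv(V(\vlambda))$, which matches the number of top components of $F(\vlambda)$. Injectivity of $w \mapsto Z_w$ follows because distinct diskoids $D \ne D'$ yield distinct subvarieties $\pi(Q_g(D)) \ne \pi(Q_g(D'))$ of $F(\vlambda)$ (the simplicial structure of $D$ is recoverable from any one globally isometric embedding), and a cardinality count then forces surjectivity.

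The main obstacle will be the rigidity step. Turning the informal $\CAT(0)$ slogan ``interior vertices are determined by the boundary'' into an algebraic open immersion requires a detailed study of medians and convex hulls of triples of vertices in the affine $A_2$ building, together with a careful identification of each trivalent interior vertex of a non-elliptic $\SL(3)$ web with the local pattern of such a median. The non-elliptic hypothesis is indispensable here: faces of positive combinatorial curvature would leave interior vertices underdetermined and cause $\dim Q_g(D)$ to drop below the dimension of any top component of $F(\vlambda)$, so that no bijection of the claimed form could hold.
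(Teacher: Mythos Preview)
Your rigidity step has a genuine gap. You speak of ``each trivalent interior vertex adjacent to three previously-placed vertices,'' but interior vertices of the dual diskoid $D$ are not trivalent: they correspond to interior faces of the non-elliptic web, which by hypothesis have at least six sides, so every interior vertex of $D$ has degree $\ge 6$. There is no evident inward peeling of $D$ in which each new interior vertex is pinned down by a small fixed number of already-placed neighbors, and the ``median-like construction'' you invoke for triples has no clear analogue for these high-valence vertices in the $A_2$ building. (The $\SL(2)$ picture works precisely because the dual diskoid is a tree, with no interior vertices at all.) Relatedly, your description of $Q_g(D)$ as an iterated $\P^1$- or $\P^2$-bundle is not justified, and your injectivity argument for $w \mapsto Z_w$ --- that $D$ is recoverable from a single embedding --- does not show that two distinct diskoids cannot have $Q_g$ dense in the same component, since $Z_w$ is a variety rather than a single configuration.

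The paper takes a different route. It uses the bijection from \cite{Kuperberg:spiders} between non-elliptic webs and minuscule paths $\vmu$ (recording distances from the base point $p$ to the boundary vertices), together with Haines' bijection between minuscule paths and components of $F(\vlambda)$ (\thm{th:haines}). The key step (\thm{th:extend}) shows that any \emph{fan} configuration in $Q(A(\vlambda,\vmu))$ extends uniquely to $Q(D(\vlambda,\vmu))$: for each boundary vertex $q_k$, a combinatorial geodesic in $D$ from $p$ to $q_k$ has edge labels summing to $\mu_k = d(f(p),f(q_k))$, and there is a unique geodesic with that label sequence in any apartment of $\Delta$ containing $f(p)$ and $f(q_k)$; Lemmas~\ref{l:complete} and~\ref{l:everyedge} on geodesics in $\CAT(0)$ diskoids then guarantee consistency and that every edge of $D$ is covered. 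Interior vertices are thus placed along geodesics radiating from $p$, not by peeling inward from $\partial D$. Identifying $\pi(Q_g(D))$ with a dense open of the correct component, and hence the bijection, requires a further argument using the cyclic action on Satake fibres (\lem{l:cyclic}), which in turn rests on the unitriangularity result (\thm{th:unitriangweak}).
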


Our construction can be viewed as an explanation of why basis webs
are non-elliptic.  A web is non-elliptic if and only if its diskoid is
$\CAT(0)$, essentially by definition.  It is well-known that every affine
buildings is a $\CAT(0)$ space \cite{BT:local1}.  Moreover, every convex
subset of a $\CAT(0)$ space, such as a diskoid which is isometrically
embedded in a building, is necessarily $\CAT(0)$.  We will also show
that the image of each diskoid embedding $f:D \to \Delta$ in $Q_g(D)$
has a least area property.  Likewise, the elliptic relations of the $A_2$
spider can be viewed as area-decreasing transformations.

Meanwhile, if $w$ is non-elliptic, then $Q(D)$ is sometimes the closure
of $Q_g(D)$ and hence maps to a single component of $F(\vlambda)$.
Eventually, $Q(D)$ has other components and maps to more than one
component of $F(\vlambda)$.  These other components seem related to the
phenomenon that web bases are not dual canonical.  However, we can get
an upper triangularity result as follows.  In \sec{s:cyclic},
we will define a partial order $\le_S$ on the set of non-elliptic webs
using pairwise distances between boundary vertices of their dual diskoids.

\begin{theorem} The change of basis in $\Inv(V(\vlambda))$ from non-elliptic
webs to the Satake basis is unitriangular, relative to the partial order
$\le_S$.
\label{th:unitriang} \end{theorem}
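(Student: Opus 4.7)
The plan is to combine \thm{th:main} with \thm{th:homclass} and to control the Satake expansion of each $\Psi(w)$ by comparing pairwise boundary distances in the dual diskoids.

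By \thm{th:main}, the top-dimensional components of $F(\vlambda)$ are in bijection with non-elliptic webs of boundary $\vlambda$; write $Z(w')$ for the component attached to $w'$. Then \thm{th:homclass} lets us write
\[
\Psi(w) \;=\; \pi_*(c(w)) \;=\; \sum_{w'} a_{w',w}\,[Z(w')],
\]
where $a_{w',w} \in \Z$ records the local degree of $\pi : Q(D(w)) \to F(\vlambda)$ over $Z(w')$, weighted by the normalization of the class $c(w)$. The theorem thus reduces to two claims: (i) $a_{w,w} = 1$, and (ii) $a_{w',w} \ne 0$ implies $w' \le_S w$.

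I would address (ii) first since it is the more robust. Suppose $a_{w',w}\ne 0$, so a Zariski dense subset of $Z(w')$ meets the image of $\pi$; pick a generic point $x$ in this intersection. On one hand, since $x \in Z(w')$, \thm{th:main} applied to $w'$ presents $x$ as the image of a globally isometric embedding of $D(w')$ into $\Delta$, so its boundary tuple $(x_1,\ldots,x_n)$ realises the diskoid distances $d_{D(w')}(v_i,v_j)$ exactly. On the other hand, $x$ also arises from some edge-length preserving $f \in Q(D(w))$, and the triangle inequality along any diskoid path in $D(w)$ forces $d_\Delta(f(v_i),f(v_j)) \le d_{D(w)}(v_i,v_j)$. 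Combining,
\[
d_{D(w')}(v_i,v_j) \;\le\; d_{D(w)}(v_i,v_j)
\]
for every pair of boundary vertices, which is precisely the relation $w' \le_S w$ defined in \sec{s:cyclic}. For (i), \thm{th:main} provides an isomorphism $Q_g(D(w)) \cong U$ onto a dense open $U \subset Z(w)$. Over any $x \in U$ the boundary distances already saturate the diskoid distances of $D(w)$, so any preimage $f \in Q(D(w))$ must make every triangle inequality an equality, forcing $f$ to be globally isometric and hence to lie in $Q_g(D(w))$. Thus $\pi$ restricts to an isomorphism over $U$, and once $c(w)$ is matched with the fundamental class of $Q_g(D(w))$ on this open locus we read off $a_{w,w}=1$.

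The main obstacle is the multiplicity bookkeeping in (i): one must verify both that the generic fibre of $\pi$ over $Z(w)$ is scheme-theoretically reduced and that the class $c(w)$ of \thm{th:homclass} really does restrict to the fundamental class of $Q_g(D(w))$ on the open where $\pi$ is an isomorphism. The $\CAT(0)$ rigidity of the affine building, which already drives \thm{th:main}, should furnish the required transversality by showing that the globally isometric extension of a generic boundary configuration is unique and unobstructed; this is where the technical care needs to go.
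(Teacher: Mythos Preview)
Your argument is correct in outline but inverts the paper's logical order, and you should be aware of what this hides.  In the paper, \thm{th:main} is not proved independently and then used to deduce \thm{th:unitriang}; rather, both are consequences of two prior results.  First, the paper proves directly a weaker unitriangularity (\thm{th:unitriangweak}) with respect to the coarser order $\vnu\le\vmu$ given by comparing only base-point distances $\nu_i\le\mu_i$.  This uses only \cor{c:support} and the extension result \thm{th:extend}.  Second, it uses this weak unitriangularity in a short but genuinely nontrivial argument (\lem{l:cyclic}) to show that geometric rotation of Satake components agrees with rotation of diskoids; the point is that the diagonal bijection determined by a unitriangular matrix is unique, hence must intertwine the two cyclic actions.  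With \lem{l:cyclic} in hand, applying the weak unitriangularity from every base point yields \thm{th:unitriang}, and identifying the dense locus $U\subset Z(w)$ where all pairwise boundary distances are realized yields \thm{th:main}.  So when you invoke \thm{th:main} for $w'$ to say that a generic point of $Z(w')$ realizes all diskoid distances of $D(w')$, you are implicitly using \lem{l:cyclic}, whose proof already contains the unitriangularity you want.  Your deduction is valid, but it is packaging rather than a new proof.

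On the technical side, your diagnosis of the obstacle in (i) is accurate, and the paper's resolution is more delicate than the $\CAT(0)$ heuristic you suggest.  Showing that $\pi^{-1}(U)=Q_g(D(w))$ set-theoretically is exactly the proposition at the end of \sec{s:cyclic}, and relies on the geodesic-extension \lem{l:complete}.  But to get the coefficient $a_{w,w}=1$ you also need that the scheme-theoretic fibre is reduced and that the relevant component of $Q(D(w))$ appears with multiplicity one in $c(w)$.  The paper handles this in \lem{l:coeff1} and the two lemmas following it: one checks that the extension map of \thm{th:extend} is an isomorphism of schemes (by verifying injectivity on $S$-points for all finite-type $\C$-schemes $S$, not just $\C$-points), and then appeals to an elementary lemma that a map to a normal variety which is bijective on points and injective on $S$-points is an isomorphism.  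This is where the real work in (i) lies; the building-theoretic rigidity alone gives only the set-theoretic statement.
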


We have learned from Sergei Ivanov \cite{Ivanov:personal} that the partial
order in \thm{th:unitriang} refines the partial order on webs given
by the number of vertices.

Also, in \sec{s:notsatake}, we will show that the web basis, the Satake
basis, and the dual canonical basis for $\SL(3)$ are all eventually
different.

Finally, in \sec{s:euler}, we will propose a different formulation of the
geometric Satake correspondence based on convolution of constructible
functions rather than convolution of homology classes.  (In
\thm{th:convperv}, we reinterpret geometric Satake in terms of convolution
in homology).  We will prove this conjecture in the case of a tensor
product of minuscule representations of $\SL(3)$.

\subsection{Satake fibres and Springer fibres}

When $G = \SL(m)$ and $\vlambda = (\omega_1, \dots, \omega_1)$ is an $n =
mk$ tuple consisting of $\omega_1$ (the highest weight of the standard
representation), then $F(\vlambda)$ is isomorphic to the $(k, k, \dots,
k)$ Springer fibre.  In other words, $F(\vlambda)$ is the variety of
flags in $\C^n$ invariant under a nilpotent endomorphism with $m$
Jordan blocks all of size $k \times k$.  We have already mentioned the
well-known description of the components of the Springer or Satake fibre
in terms of planar matchings when $m=2$.  This Springer fibre formalism
and this description of it have been used as a model of Khovanov homology
\cite{Khovanov:springer,Stroppel:springer}.  One motivation for the present
work is to generalize this result to case $m = 3$ and obtain a description
of the components of the Springer or Satake fibre using non-elliptic webs.
\thm{th:main} accomplishes this task.  (See also the end of the introduction
of \cite{Tymoczko:bijection}.)

\begin{fullfigure*}{f:joke}{Spiders and buildings}
\frame{\includegraphics[height=3in]{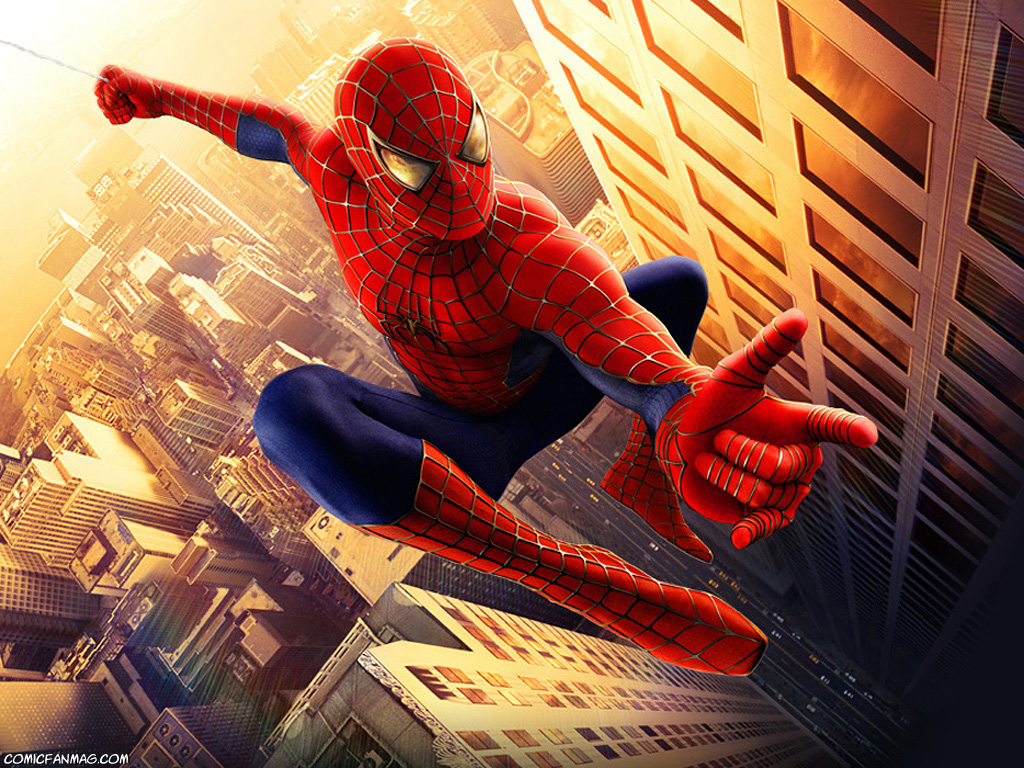}}
\end{fullfigure*}

\acknowledgments

The authors would like to thank Dave Anderson, Charles Frohman, Dennis
Gaitsgory, Andr\'e Henriques, Misha Kapovich, Anthony Licata, John Millson,
Scott Morrison, Hiraku Nakajima, Alistair Savage, Petra Schwer, and Juliana
Tymoczko for useful discussions.

Another motivation for our work is shown in \fig{f:joke}.

\section{Spiders}
\label{s:spiders}

\subsection{Pivotal and symmetric categories}
\label{s:pivotal}

The definitions used in this section are nicely summarized in a survey by
Selinger \cite{Selinger:survey}; they are originally due to Freyd-Yetter
\cite{FY:braided} and Joyal-Street \cite{JS:braided}.

A \emph{pivotal category} $\cC$ is a (strict) monoidal tensor category
such that each object $A$ has a two-sided dual object $A^*$.  This means
that there is a contravariant functor $F(A) = A^*$ from $\cC$ to itself
which is also an order-reversing tensor functor, \ie,
$$(A \tensor B)^* = B^* \tensor A^*,$$
and which has these extra properties:
For each object $A$, there are ``cup" and ``cap" morphisms
$$b_A:I \longto A^* \tensor A \qquad d_A:A \tensor A^* \longto I$$
where $I$ denotes the unit object, such that
$$(1_A \tensor d_A)(b_A \tensor 1_A) = 1_A \qquad
    (d_{A_*} \tensor 1_{A_*})(1_{A_*} \tensor b_{A_*}) = 1_{A^*}.$$
In addition, $*$ is an anti-involution of the category $\cC$.  (We assume
that $*$ is a strict involution of $\cC$ that reverses both tensor products
and compositions of morphisms.)  The axiom can be graphically summarized
as follows:
\eq{e:plumbing}{
\begin{tikzpicture}[scale=.75,baseline=-.5ex,allow upside down]
\draw[midto,web] (-1,-1) -- (-1,0);
\draw[web] (-1,0) arc (180:0:.5) arc (180:360:.5);
\draw[midto,web] (1,0) -- (1,1);
\node[anchor=south] at (-.5,.5) {$d_A$};
\node[anchor=north] at (.5,-.5) {$b_A$};
\node[anchor=east] at (-1,-.5) {$A$};
\draw[dashed] (-1.5,0) -- (1.5,0);
\end{tikzpicture} \;=\;
\begin{tikzpicture}[scale=.75,baseline=-.5ex]
\draw[web,midto] (1,-1) -- (1,0);
\draw[web] (1,0) arc (0:180:.5) arc (360:180:.5);
\draw[web,midto] (-1,0) -- (-1,1);
\node[anchor=south] at (.5,.5) {$d_{A^*}$};
\node[anchor=north] at (-.5,-.5) {$b_{A^*}$};
\node[anchor=west] at (1,-.5) {$A$};
\draw[dashed] (-1.5,0) -- (1.5,0);
\end{tikzpicture} \;=\;
\begin{tikzpicture}[scale=.75,baseline=-.5ex]
\draw[web,midto] (0,-1) -- (0,1);
\node[anchor=east] at (0,0) {$A$};
\end{tikzpicture}\;.}
A \emph{pivotal functor} is a tensor functor that preserves the above
structure.

Every object $A$ in a monoidal category has an \emph{invariant space}
$$\Inv(A) \defeq \Hom(I,A).$$
If the category is pivotal, then each invariant space has two other
important properties.  First, every space of morphisms is an invariant
space by the relation
$$\Hom(A,B) \cong \Inv(A^* \tensor B).$$
Second, there is a cyclic action on the invariant spaces in tensor products
$$R:\Inv(A \tensor B) \stackrel{\cong}{\longto} \Inv(B \tensor A),$$
which we call a \emph{rotation map}.  It extends to a rotation
of $n$ tensor factors:
$$R:\Inv(A_1 \tensor \cdots \tensor A_n) \stackrel{\cong}{\longto}
    \Inv(A_2 \tensor \cdots \tensor A_n \tensor A_1).$$

Another way to describe a pivotal category, already suggested in equation
\eqref{e:plumbing}, is that it has the structure to evaluate a planar
graph $w$ drawn in a disk, if the edges of $w$ are oriented and labelled
by objects and the vertices are labelled by invariants.  (The literature
uses the words ``labelled" and ``colored" interchangeably here; Selinger
\cite{Selinger:survey} calls an allowed set of colors a ``signature".)
The value of such a graph $w$ is another invariant, taking values in the
invariant space of the boundary of $w$.  The graph is considered up to
isotopy rel boundary, and an edge labelled by $A$ is equivalent to the
opposite edge labelled by $A^*$.  It is possible to write axioms for a
pivotal category using invariants and planar graphs rather than morphisms.
From this viewpoint, a word in a pivotal category is such a graph and it
can be called a \emph{web}.

A web is a special case of a \emph{ribbon graph} \cite{RT:ribbon}, the
difference being that a ribbon graph can also have crossings.
A \emph{braided category} is a monoidal category with crossing
isomorphisms
$$c_{A,B}:A \tensor B \to B \tensor A$$
that satisfy suitable axioms so that, among other things, a braid group acts
on the invariant space of a tensor product.  If the crossing isomorphisms
are involutions, then the braid group action descends to a symmetric group
action and the monoidal category is called \emph{symmetric}.  If a category
is both symmetric and pivotal, then there is an important compatibility
condition that together makes it a \emph{compact closed category}.
We require that the two involutions on $\Inv(A \tensor A)$, one coming from
the pivotal structure and the other from the symmetric structure, agree.
Equivalently, we require that
$$c_{A^*,A}(b_A) = b_{A^*}.$$
In a compact closed symmetric category, abstract graphs $w$ can be evaluated
whether or not they are planar.

Two other intermediate types of categories between pivotal and compact closed
are \emph{ribbon categories} and \emph{spherical categories}.  A spherical
category is a pivotal category with the extra property that left traces
equal right traces, which allows the evaluation of a graph $w$ embedded in
the sphere rather than in the plane.  A ribbon category is both pivotal and
braided in a compatible way, and allows the evaluation of a framed graph $w$
in $\R^3$.  We will only need the pivotal category axioms in this article,
but all categories considered are actually ribbon or compact closed.

\subsection{Sign conventions}
\label{s:signs}

In many cases a pivotal category $\cC$ which is linear over a field can
be modified to a different category $\cC'$.  We will be interested in two
modifications:  Sign changes to the pivotal structure of $\cC$ that do
not affect its tensor structure, and sign changes to the tensor structure
of $\cC$.  We want to restrict attention to those sign changes that allow
us to say that $\cC$ and $\cC'$ have the same algebraic information.  For
simplicity, when discussing signs, we assume that $\cC$ is abelian-linear
over an algebraically closed field $k$ not of characteristic 2, and
semisimple with irreducible trivial object.

Another objective of this section is to correctly interpreted labelled
graphs $w$ in a pivotal category with unoriented edges.  Some edges in
a labelled graph $w$ for a pivotal or compact closed category can be
unoriented.  Suppose that $A \cong A^*$ is self-dual, and suppose further
that the isomorphism $\phi_A \in \Hom(A,A^*)$ is cyclically invariant if
interpreted as an element of $\Inv(A \tensor A)$.  In this case we say
that $A$ is \emph{symmetrically self-dual}.  (This definition
does not require any linearity assumption.)  Then an unoriented edge can
be defined by a replacement:
\eq{e:tag}{\begin{tikzpicture}[baseline=-.5ex]
\draw[web] (-.75,0) -- (.75,0) node[black,above,midway] {$A$};
\end{tikzpicture}\; \defeq\;
\begin{tikzpicture}[baseline=-.5ex]
\draw[web,midto] (-1,0) -- (0,0) node[black,above,midway] {$A$};
\draw[web,midto] (1,0) -- (0,0) node[black,above,midway] {$A$};
\fill[web] (0,0) circle (.07);
\end{tikzpicture},}
where the dot on the right side represents $\phi_A$.  Algebraically, if
(and only if) $A$ is symmetrically self-dual, then $\cC$ is equivalent
to a pivotal category in which $A = A^*$ outright, and $b_A = b_{A^*}$
and $d_A = d_{A^*}$.  If every self-dual object in $\cC$ is symmetrically
self-dual, then $\cC$ is called \emph{unimodal} \cite{Turaev:quantum}.
If $A \cong A^*$ but $A$ is not symmetrically self-dual, then only the
right side of \eqref{e:tag} makes sense, and only if it is altered in
some way to break symmetry; Morrison denotes such a morphism by a ``tag"
\cite{Morrison:diagram}.

Suppose instead that $A \cong A^*$ but $A$ is not symmetrically self-dual,
and suppose that $\cC$ is $k$-linear and semisimple and $A$ is irreducible.
Then by Schur's lemma, $\Hom(A,A^*)$ is 1-dimensional and rotation $R$
is multiplication by $-1$.  In this case, $A$ is \emph{anti-symmetrically
self-dual}.  Thus we can ask whether we can make $\cC$ unimodal by
changing signs.  This is what happens in our case (see \sec{s:examples}),
but there are also examples (namely, representation categories of finite
groups) that are not unimodal for any pivotal structure.

To understand the allowed sign changes to the pivotal structure of $\cC$,
we first assume by category equivalence that $A$ and $A^*$ are different
objects for every $A$.  Then by \eqref{e:plumbing}, we can negate $b_A$ and
$d_A$ for some irreducible $A$, without changing $b_{A^*}$ and $d_{A^*}$.
This yields a new pivotal category $\cC'$, provided that the sign change
function $s(A)$ satisfies
$$s(A) = s(B)s(C)$$
whenever $\Hom(A,B \tensor C) \ne 0$.  If $A$ is self-dual
and $s(A) = -1$, then this modification changes the sign
of the self-duality of $A$.  It also negates the dimension of $A$,
by definition
$$\dim(A) = d_{A^*} \circ b_A.$$
Finally, since we are changing the pivotal structure by signs rather than
by other phases, $\cC$ is spherical if only if $\cC'$ is spherical.

We can change the sign of the tensor structure of $\cC$ by a similar
but more complicated construction.  We can assume, after passing to an
equivalent category, that the objects $\cC$ are a free polynomial semiring
over the irreducible objects of $\cC$ with respect to the operations $\oplus$
and $\tensor$.  If
$$A = A_1 \tensor A_2 \tensor \cdots \tensor A_a$$
is a tensor product of irreducibles, and likewise $B$, $C$, and $D$
are also tensor products of irreducibles, then we can
change the sign of the tensor product map
$$\tensor:\Hom(A,B) \tensor \Hom(C,D) \to \Hom(A \tensor C,B \tensor D)$$
by some sign function $s(A,B;C,D) \in \{\pm 1\}$, defined when
$\Hom(A,B)$ and $\Hom(C,D)$ are both nonzero.  In order for the result
$\cC'$ to be another pivotal category, we need to check that compositions
and tensor products of morphisms are still both associative.  In other
words, we need to check the equations
\begin{align*}
s(A,C;D,F) &= s(A,B;D,E)s(B,C;E,F) \\
s(A,B;C,D)s(A \tensor C,B \tensor D;E,F)
    &= s(A,B;C \tensor E,D \tensor F)s(C,D;E,F)
\end{align*}
when the right sides are defined.  It turns out that if $\cC$
is pivotal or spherical, then the new tensor category $\cC'$ can also be
made pivotal or spherical.

\subsection{Examples}
\label{s:examples}

A fundamental example of a pivotal category, indeed a compact closed
category, is the category $\vect(k)$ of finite-dimensional vector spaces
over a field $k$.  In this example, a web can be interpreted as the graph
of a tensor calculus expression (or a ``spin network").  For example,
if $\eps_{abc}$ is a trilinear determinant form on a 3-dimensional vector
space $V$, and if $\eps^{abc}$ is the dual form on $V^*$, then the tensor
$\eps_{abc} \eps^{cde}$ (with repeated indices summed) can be drawn as
$$\begin{tikzpicture}[baseline=-.5ex,scale=.65]
\draw[midto,web] (-.5,.866) -- (0,0);
\draw[midfrom,web] (0,0) -- (1,0);
\draw[midto,web] (1,0) -- (1.5,.866);
\draw[midto,web] (-.5,-.866) -- (0,0);
\draw[midto,web] (1,0) -- (1.5,-.866);
\node[anchor=south east] at (-.5,.866) {$a$};
\node[anchor=north east] at (-.5,-.866) {$b$};
\node[anchor=south west] at (1.5,.866) {$d$};
\node[anchor=north west] at (1.5,-.866) {$e$};
\end{tikzpicture},$$
with the convention in this case that the vertex labels can be inferred
from context.  If the characteristic of $k$ is not 2, then another
fundamental example is the category $\svect(k)$ of finite-dimensional
\emph{super vector spaces}, which are $\Z/2$-graded vector spaces with
a non-trivial symmetric and pivotal structure.   Namely, if $v \in V$
and $w \in W$ are homogeneous elements of super vector spaces, then
$$c_{V,W}(v \tensor w) = (-1)^{(\deg v)(\deg w)} w \tensor v.$$
If $v \in V$ and $w \in V^*$ are homogeneous, then the cap $d_V$ is likewise
adjusted so that
$$d_V(v \tensor w) = (-1)^{(deg v)} w(v).$$

If $G$ is a group (or a Lie group, Lie algebra, or algebraic
group), then $\rep(G,k)$, the category of finite-dimensional representations
(or continuous or algebraic representations) over $k$ is a pivotal category
with a pivotal functor to $\vect(k)$.  For the remainder of the article,
we let $G$ be a simple, simply connected algebraic group over $\C$ (and
later we will specialize to $G = \SL(3)$).  We will study the pivotal
category $\rep(G) = \rep(G,\C)$.

There is a deformation $\rep_q(G)$ of $\rep(G) = \rep_1(G)$ that consists
of representations of the quantum group $U_q(\fg)$, when the parameter $q$
is not a root of unity.  (The deformation also exists when $q$ is a root of
unity, but there is more than one standard choice for it.)  This deformation
is also a pivotal category, although it has no pivotal functor to $\vect$,
because the cup and cap morphisms deform.  Even though many ideas in this
article are clearly related to quantum representations, we will concentrate
on $\rep(G)$, except in \sec{s:euler} when $\rep_{-1}(G)$ will
also appear.

We are interested in two other variations of $\rep(G)$.  First, we want
to change its pivotal structure to make it unimodal.  Recall that the
irreducible representations $V(\lambda)$ of $G$ are labelled by the set of
dominant weights.  For a dominant weight $\lambda$, we write $\lambda^*$ for
the dominant weight such that $V(\lambda)^* \cong V(\lambda^*)$.  We also
write $\rho$ for the Weyl vector and $\rhov$ for the dual Weyl vector.
We make each $V(\lambda)$ a super vector space by giving it the grading
$\braket{2\lambda,\rhov} \bmod 2$.  In this way we realize $\rep(G)$ as a
subcategory of $\svect(\C)$ with a different pivotal and symmetric structure,
and we call this version $\rep^u(G)$.  Likewise, it has a unimodal pivotal
deformation $\rep^u_q(G)$.

Following \sec{s:signs}, $\rep(G)$ and $\rep_{-1}(G)$ differ only by sign
rules and all of them have equivalent information.  To obtain $\rep_{-1}(G)$
from $\rep(G)$ in this fashion, we use the abbreviations
\begin{align*} \vlambda &= (\lambda_1,\lambda_2,\ldots,\lambda_n) \\
V(\vlambda) &= V(\lambda_1) \tensor \cdots \tensor V(\lambda_n) \\
\lambda &= \sum_i \lambda_i.
\end{align*}
Then we define the sign rule
$$s(V(\vlambda),V(\vgamma);V(\vmu),V(\vnu)) =
    (-1)^{\braket{2\lambda,\rhov}\braket{\mu-\nu,\rhov}}.$$
This sign rule takes $\rep(G)$ to $\rep_{-1}(G)$ and $\rep^u(G)$ to
$\rep_{-1}^u(G)$.

The other variation is a restriction to minuscule representations.
Recall that a dominant weight $\lambda$ is called \emph{minuscule} if
$\braket{\alphav, \lambda} \le 1$ for every positive coroot $\alphav$.
If $\lambda$ is a minuscule dominant weight, then $V(\lambda)$ is called a
\emph{minuscule representation}.  These representations have the special
property that all of their weights are in the Weyl orbit of the highest
weight.  We define $\rep(G)_\min$ to be the monoidal subcategory of $\rep(G)$
generated by minuscule representations.  So the objects of $\rep(G)_\min$
are tensor products of minuscule representations.  It is a symmetric
category which is neither an additive nor an abelian category.  If there
exists a minuscule $\lambda$ such that $\braket{2\lambda, \rhov}$ is odd,
then $\rep(G)_\min$ is also not a pivotal category, because it is skeletal
and yet has objects which are anti-symmetrically self-dual in $\rep(G)$.
However, $\rep^u(G)_\min$ is a well-defined pivotal category in which $*$
is a strict involution and $V(\lambda^*) = V(\lambda)^*$.

In the case $G = \SL(n)$, and in some other cases, $\rep(G)$ can be recovered
as the Karoubi envelope of $\rep(G)_\min$, although we will not use this
construction in this article.

The other main pivotal category which we will study in this paper is the
category of $\Gv(\cO)$-equivariant perverse sheaves $\perv(\Gr)$ on $\Gr$.
This category has a relatively straightforward pivotal structure. It also
has a more delicate symmetric structure which is called a ``commutativity
constraint'' or ``braiding'', as defined by Ginzburg \cite{Ginzburg:loop}
and Mirkovi\'c-Vilonen \cite{MV:geometric} in two different ways.
(See also \cite[Sec. 5.3.8]{BD:hitchin}.)
\thm{th:satake} states that $\perv(G)$ is equivalent to $\rep^u(G)$ both as a
pivotal category and a symmetric category; we will be more interested in
the pivotal structure.  We also will be more interested in the minuscule
analog of $\perv(\Gr)$, which we will analyze in \sec{s:conv}.

\subsection{Free spiders and presentations}

Pivotal categories can also be presented by generators and relations.
If the pivotal category is additive-linear over a ring or a field, then
it can presented in the same sense, using linear combinations of words in
the generators.  In general there are generating objects (or edges) and
generating morphisms (or invariants or vertices), while the relations are
all morphisms.  Relations in a pivotal category are also known as
\emph{planar skein relations}.

We now define the free spider $\fsp(G)$ to be the free $\C$-linear pivotal
category generated by an edge for each minuscule representation of $G$
and a vertex for every triple $\lambda, \mu, \nu$ of minuscule dominant
weights such that
$$\Inv_G(V(\lambda, \mu, \nu)) \ne 0.$$
Note that the minuscule condition
forces this vector space to be at most one-dimensional.  In $\fsp(G)$,
we also impose that the dual of the $\lambda$ edge is $\lambda^*$.
In \cite{Morrison:diagram}, $\fsp(\SL(n))$ was denoted $\text{Sym}_n$.

A free spider has the same relationship to webs as a free group has to
words in its generators.  Namely, two webs are equal in $\fsp(G)$ if and
only if they are isotopic rel boundary.  (Selinger \cite{Selinger:survey}
also defines free categories of various kinds generated by signatures.)

Let us fix $q \in \C$, non-zero and not a root of unity (but possibly
equal to 1).  There is a pivotal functor
$$\Psi:\fsp(G) \to \rep^u_q(G)_\min,$$
which is defined by choosing a non-zero element in each invariant space
$$\Inv_{U_q(\fg)}(V(\lambda, \mu, \nu)).$$
In particular, for each web $w$ with boundary $\vlambda$, we obtain
an element
$$\Psi(w) \in \Inv_{U_q(\fg)}(V(\vlambda)).$$
Actually, since webs are a notation for words in any pivotal category,
we could say also say that $w$ ``is" $\Psi(w)$, or that its value is
$\Psi(w)$.  But the distinction between $w$ and $\Psi(w)$ will be useful
for us.  The first result is that $\Psi$ is surjective when $G = \SL(n)$
\cite[Prop. 3.5.8]{Morrison:diagram}. (This follows from Weyl's fundamental
theorem of invariant theory.)  Thus, the vectors $\Psi(w)$ of webs $w$
span the invariant spaces.

It is an open problem to generate the kernel of $\Psi$ with planar skein
relations in $\fsp(G)$.  This problem has been solved when $G$ has rank 1
or 2 by the third author \cite{Kuperberg:spiders}.  Kim \cite{Kim:thesis}
has conjectured an answer for $\SL(4)$ in \cite{Kim:thesis} and Morrison
\cite{Morrison:diagram} has done so for $\SL(n)$.  Once these planar
skein relations (which must depend on $q$) are determined, then the
resulting presented pivotal category can be called a spider and we denote
it $\spd_q(G)$.

We now review the known solutions for $\SL(2)$ and $\SL(3)$.  The
\emph{Temperley-Lieb category} or \emph{$A_1$ spider} $\spd_q(\SL(2))$ is
the quotient of $\fsp(\SL(2))$ by the single relation
\eq{e:tl}{\begin{tikzpicture}[baseline=-.5ex,web]
\draw (0,0) circle (.4);
\end{tikzpicture} \;\;= \;\;-q-q^{-1}.}
(Since $\SL(2)$ has a single, self-dual minuscule representation,
$\fsp(\SL(2))$ and $\spd_q(\SL(2))$ have unoriented edges with a single
color or label.)  The \emph{$A_2$ spider} $\spd_q(\SL(3))$ is the quotient
of $\fsp(\SL(3))$ by the relations
\begin{align}
\begin{tikzpicture}[baseline=-.5ex,web,allow upside down]
\draw[midto,rotate=180] (0,0) circle (.4);
\end{tikzpicture}\;\; &= \;\; q^2+1+q^{-2} \nonumber \\
\begin{tikzpicture}[baseline=-.5ex,web]
\draw[midto] (-1.25,0) -- (-.5,0);
\draw[midto] (.5,0) -- (1.25,0);
\draw (.5,0) arc (30:150:.577); \draw[midto] (0,.288) -- (-.01,.288);
\draw (.5,0) arc (330:210:.577); \draw[midto] (0,-.288) -- (-.01,-.288);
\end{tikzpicture}\;\; &= \;\;
(-q-q^{-1})\;\;
\begin{tikzpicture}[baseline=-.5ex,web]
\draw[midto] (-.5,0) -- (.5,0);
\end{tikzpicture} \label{e:a2spider} \\
\begin{tikzpicture}[baseline=-.5ex,web]
\draw (.35,.35) arc (60:120:.7) arc (150:210:.7)
    arc (240:300:.7) arc (-30:30:.7);
\draw[midfrom] (0,.444) -- (.01,.444);
\draw[midfrom] (-.444,0) -- (-.444,-.01);
\draw[midfrom] (0,-.444) -- (-.01,-.444);
\draw[midfrom] (.444,0) -- (.444,.01);
\draw[midto] (.35,.35) -- (.707,.707);
\draw[midfrom] (-.35,.35) -- (-.707,.707);
\draw[midto] (-.35,-.35) -- (-.707,-.707);
\draw[midfrom] (.35,-.35) -- (.707,-.707);
\end{tikzpicture}\;\; &=\;\;
\begin{tikzpicture}[baseline=-.5ex,web]
\draw (.5,.5) arc (315:225:.707);
\draw[midto] (0,.293) -- (.01,.293);
\draw (-.5,-.5) arc (135:45:.707);
\draw[midto] (0,-.293) -- (-.01,-.293);
\end{tikzpicture}\;\; + \;\
\begin{tikzpicture}[baseline=-.5ex,web]
\draw (.5,-.5) arc (225:135:.707);
\draw[midto] (.293,0) -- (.293,.01);
\draw (-.5,.5) arc (45:-45:.707);
\draw[midto] (-.293,0) -- (-.293,-.01); \nonumber
\end{tikzpicture}\;\;.
\end{align}
(Since $\SL(3)$ has two minuscule representations which are dual to each
other, $\fsp(\SL(3))$ and $\spd_q(\SL(3))$ have oriented edges with
one label or color.  By convention, the edge is labelled by the first
fundamental representation $\omega_1$ in the direction that it is oriented.)
The other two known spiders, $\spd_q(B_2)$ and $\spd_q(G_2)$, have similar
but more complicated presentations.

\begin{theorem}[Kauffman \cite{Kauffman:spinknot}] If $q$ is not a root
of unity, then $\spd_q(\SL(2))$ is equivalent to the pivotal category
$\rep^u_q(\SL(2))_\min$ of minuscule representations.
\label{th:kauffman} \end{theorem}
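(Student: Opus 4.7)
The goal is to show that the pivotal functor $\Psi\colon\spd_q(\SL(2))\to\rep^u_q(\SL(2))_\min$ induced by the one defined on $\fsp(\SL(2))$ is an equivalence. Since $\spd_q(\SL(2))$ and $\rep^u_q(\SL(2))_\min$ are both generated as pivotal categories by a single self-dual object (the standard representation $V$ of quantum dimension $-q-q^{-1}$), it suffices to show two things: that $\Psi$ descends from $\fsp(\SL(2))$, and that $\Psi$ is a bijection on each invariant space $\Inv(V^{\tensor n})$.

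First, I would verify that $\Psi$ descends to the quotient by the Temperley-Lieb relation \eqref{e:tl}. This is a quantum dimension computation: the loop evaluates in $\rep^u_q(\SL(2))_\min$ to $\dim_q(V) = d_{V^*}\circ b_V$. With the unimodal pivotal structure, the grading on $V$ contributes the sign so that $\dim_q(V)=-q-q^{-1}$, matching \eqref{e:tl}.

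Next, fullness. The invariant spaces $\Inv(V^{\tensor n})$ vanish for odd $n$ (by the weight grading), so it suffices to treat $n=2k$. For $\SL(2)$, the first fundamental theorem of invariant theory (which survives to the quantum setting for $q$ not a root of unity) says that every invariant is a linear combination of complete pairings built from the $q$-analogue of the symplectic form $b_V$. Each such complete pairing is the image $\Psi(w)$ of some (not necessarily crossingless) matching $w$, and using the Temperley-Lieb relation one can rewrite any crossing matching in terms of non-crossing ones. So $\Psi$ sends the $C_k$ non-crossing matchings on $2k$ points to a spanning set of $\Inv(V^{\tensor 2k})$.

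Finally, faithfulness. A dimension count shows $\dim\Inv(V^{\tensor 2k})=C_k$, the Catalan number, since for generic $q$ the invariant space has the same dimension as in the classical case, namely the multiplicity of the trivial representation in the $k$-th tensor power decomposition, computed by the ballot number $\binom{2k}{k}-\binom{2k}{k+1}=C_k$. Thus fullness will imply faithfulness provided the $C_k$ non-crossing matchings in $\spd_q(\SL(2))$ are themselves linearly independent. This is the main obstacle, and I would handle it by computing the Gram matrix $M_k$ of non-crossing matchings under the bilinear pairing
\[
\langle w_1,w_2\rangle=\Psi(\bar w_1\cup w_2),
\]
where $\bar w_1\cup w_2$ is the closed diagram obtained by gluing $w_1$ to the mirror of $w_2$; by \eqref{e:tl} this evaluates to $(-q-q^{-1})^{c(w_1,w_2)}$ with $c(w_1,w_2)$ the number of loops. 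A classical computation (due to Lickorish) expresses $\det M_k$ as an explicit product of cyclotomic-like factors $[j]_q^{a_j}$ for small $j$, which is nonzero precisely when $q$ is not a root of unity. Non-singularity of $M_k$ forces the non-crossing matchings to be linearly independent in $\spd_q(\SL(2))$, completing the equivalence.
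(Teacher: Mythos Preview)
The paper does not supply its own proof of this theorem; it is quoted as a result of Kauffman and left to the citation. So there is nothing to compare your argument against, and the question is simply whether your sketch is sound. It is, in outline, the standard argument, but one step is misstated and another is redundant.

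The misstatement is in your fullness paragraph: you write that ``using the Temperley-Lieb relation one can rewrite any crossing matching in terms of non-crossing ones.'' The single relation \eqref{e:tl} is only a loop value; it says nothing about crossings. What actually reduces an arbitrary (non-planar) pairing to a linear combination of planar ones is the Kauffman bracket identity in $\rep^u_q(\SL(2))$, namely that the braiding $c_{V,V}$ is a specific linear combination of $\mathrm{id}_{V\otimes V}$ and $b_V\circ d_V$. That is an extra computation with the $R$-matrix of $U_q(\mathfrak{sl}_2)$, not the defining relation of $\spd_q(\SL(2))$.

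In fact you do not need that step at all, because your Gram matrix argument already does both jobs. The pairing $\langle w_1,w_2\rangle=(-q-q^{-1})^{c(w_1,w_2)}$ is computed purely from the loop value, and $\Psi$ preserves it; hence nonsingularity of $M_k$ forces the $C_k$ images $\Psi(w)$ to be linearly independent in $\Inv(V^{\otimes 2k})$. Since you have already observed that $\dim\Inv(V^{\otimes 2k})=C_k$, these images span, and $\Psi$ is full and faithful in one stroke. The appeal to the first fundamental theorem and to crossing resolution can be dropped entirely; what remains is exactly the classical proof via Lickorish's determinant.
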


\begin{theorem} \cite{Kuperberg:spiders} If $q$ is not a root of unity,
then $\spd_q(\SL(3))$ is equivalent to the pivotal category
$$\rep_q(\SL(3))_\min = \rep^u_q(\SL(3))_\min$$
of minuscule representations.
\end{theorem}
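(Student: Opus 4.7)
The plan is to prove two distinct claims: first, that $\rep_q(\SL(3))_\min = \rep^u_q(\SL(3))_\min$ literally; second, that the functor $\Psi$ descends to an equivalence from $\spd_q(\SL(3))$. The first claim is immediate from the sign conventions in \sec{s:examples}: the minuscule weights of $\SL(3)$ are $\omega_1$ and $\omega_2$, and $\braket{2\omega_i,\rhov}=2$ is even, so the super-grading used to define $\rep^u$ is trivial on every minuscule object, and the two pivotal/symmetric structures coincide.

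For the equivalence, I would first check \emph{well-definedness}: verify that the three skein relations \eqref{e:a2spider} hold under $\Psi$. After choosing an explicit generator of each one-dimensional invariant space at the trivalent vertices (quantum Clebsch--Gordan vectors built from the $q$-deformed $\eps_{abc}$), each closed diagram is computed directly in $U_q(\mathfrak{sl}_3)$. The circle evaluates to the quantum dimension $[3]_q=q^2+1+q^{-2}$; the bigon is forced by the one-dimensionality of $\Inv(V(\omega_1)\tensor V(\omega_2))$; and the square relation encodes the decomposition $V(\omega_1)\tensor V(\omega_2)\cong V(\omega_1+\omega_2)\oplus\C$ as the unique two-term recoupling identity. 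Hence $\Psi$ descends to a pivotal functor $\bar\Psi:\spd_q(\SL(3))\to\rep^u_q(\SL(3))_\min$. Next, \emph{fullness}: the image of $\bar\Psi$ on every invariant space is surjective by the quantum first fundamental theorem of invariant theory for $\SL(3)$, since invariants in tensor products of $V(\omega_1)$ and $V(\omega_2)$ are generated by duality pairings together with the determinant tensors $\eps^{abc},\eps_{abc}$, which are precisely the images of the basic cups/caps and trivalent vertices.

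For \emph{faithfulness}, call a web \emph{non-elliptic} if every internal face has at least six sides. The argument splits into (a) showing that by iteratively applying the three relations, every web can be written as a $\C$-linear combination of non-elliptic webs with the same boundary (each application strictly decreases the number of internal vertices, so the rewriting terminates); and (b) showing that non-elliptic webs are linearly independent in $\Inv_{U_q(\fg)}(V(\vlambda))$. For (b) I would independently compute $\dim\Inv(V(\vlambda))$ via the Weyl character formula and exhibit a bijection between non-elliptic webs with boundary $\vlambda$ and a known combinatorial basis--for instance, dominant lattice paths in the $A_2$ weight lattice with step sequence read from $\vlambda$, obtained by cutting the web along a geodesic ``growth'' and recording the resulting sequence of edge insertions. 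Combined with (a), non-elliptic webs then form bases of both the source and target invariant spaces, so $\bar\Psi$ is bijective on morphisms.

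The main obstacle is step (b): the linear independence of non-elliptic webs. Step (a) is essentially mechanical once an induction measure is fixed, but matching the count of non-elliptic webs with $\dim\Inv(V(\vlambda))$ requires a genuine combinatorial bijection rather than merely a dimension inequality. This combinatorial backbone is precisely what the rest of the paper--via dual diskoids, $\CAT(0)$ geometry, and the Satake fibre--will illuminate from a geometric perspective.
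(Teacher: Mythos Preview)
Your proposal is essentially correct, and since the paper does not prove this theorem but only cites \cite{Kuperberg:spiders} and sketches the method, the comparison is to that sketch.  The cited argument emphasizes the \emph{confluence} (Gr\"obner/diamond lemma) property of the relations \eqref{e:a2spider}: each relation has a unique leading term (an elliptic face), and any two reduction sequences of a web yield the same linear combination of non-elliptic webs, so non-elliptic webs form a basis of $\spd_q(\SL(3))$ directly.  Your route is logically slightly different: you only use that non-elliptic webs \emph{span} $\spd_q$, then establish their linear independence in the target $\rep_q$ via the counting bijection with dominant minuscule paths, and pull independence back along $\bar\Psi$.  This legitimately bypasses the diamond lemma, at the cost of making the combinatorial bijection carry the full weight of faithfulness---which, as you correctly identify, is the genuine content either way.

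Two small corrections.  First, your termination measure in step (a) is not quite right: the circle relation removes a closed loop with no trivalent vertices, so ``number of internal vertices'' does not strictly decrease there.  The paper (following \cite{Kuperberg:spiders}) grades by the number of \emph{faces}; alternatively, the number of edges works.  Second, your computation for the first claim is correct but could be stated more sharply: for $\SL(3)$ every dominant weight $\lambda$ satisfies $\braket{2\lambda,\rhov}\equiv 0\pmod 2$ (not just the minuscule ones), since $\braket{\omega_i,\rhov}=1$ and every weight is an integer combination of the $\omega_i$; this is why the paper asserts $\rep_q(\SL(3))=\rep^u_q(\SL(3))$ outright, not merely on the minuscule subcategory.
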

(In the case of $\SL(3)$, it turns out that $\rep_q(\SL(3))$
and $\rep^u_q(\SL(3))$ are the same; see \sec{s:examples}.)

A main property of the spider relations \eqref{e:a2spider} is that they
are confluent or Gr\"obner type.  In the free pivotal category generated
by the generating edges and vertices, each web can be graded by the
number of its faces.  Then each relation has exactly one leading term, an
elliptic face.  (In the $A_2$ spider, a face is \emph{elliptic} if it has
fewer than six sides.  In the other two rank 2 spiders, a face is elliptic
if the total angle of the corresponding dual vertex is less than $2\pi$,
so that the vertex is $\CAT(0)$; see \sec{s:diskoids}.)  A web that has
that face can be expressed, modulo the relation, as a linear combination of
lower-degree webs.  The Gr\"obner property, proved using a diamond lemma,
is that any two sequences of simplifications of the same web lead to the
same final expression.  This means that the webs that cannot be simplified,
\ie, the webs without elliptic faces or the non-elliptic webs, form a basis
of each invariant space.  There is an extended version of this result,
but we will restrict our attention to the minuscule case, summarized in
the following theorem.

\begin{theorem} \cite{Kuperberg:spiders} If $\vlambda$ is a sequence of
dominant minuscule weights of $\SL(3)$, then the non-elliptic type $A_2$
webs with boundary $\vlambda$ are a basis of $\Inv(V(\vlambda))$.
\label{th:sl3basis} \end{theorem}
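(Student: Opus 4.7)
The plan is to establish the two halves of ``basis'' separately: spanning via the $A_2$ spider relations, and linear independence via a dimension count.

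First I would address spanning. The three relations in \eqref{e:a2spider} each have an elliptic face on the left-hand side (a monogon, a digon, and a square) and only faces with strictly fewer boundary edges on the right. So each relation is a rewrite rule that strictly decreases the number of elliptic faces, or at worst trades one elliptic face for smaller ones while decreasing a second combinatorial invariant. Grade webs by a lexicographic pair such as (number of faces, total number of edges). Any web $w$ with an elliptic face can then be rewritten, modulo the ideal generated by \eqref{e:a2spider}, as a $\C$-linear combination of webs of strictly smaller grade. Iterating this process, which must terminate since the grade is bounded below, writes $\Psi(w)$ as a linear combination of $\Psi(w')$ for non-elliptic $w'$. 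Together with the fact that $\Psi$ is surjective (Weyl's first fundamental theorem, as recalled in the excerpt), this shows non-elliptic webs span $\Inv(V(\vlambda))$.

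Next I would verify that the resulting reduction procedure is confluent, which is not strictly necessary for spanning but is the cleanest way to set up the rest. This is a diamond-lemma argument: one enumerates the finitely many minimal ``critical pair'' configurations in which two of the three rewrite rules can be applied to overlapping subwebs, and checks by hand that both orders of reduction produce the same normal form. Once confluence is established, every web has a unique non-elliptic normal form modulo the relations, which is a much stronger statement than spanning but will be convenient for matching up with counts.

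The main obstacle is linear independence. The approach I would take is a dimension count: exhibit a combinatorial model for a basis of $\Inv(V(\vlambda))$ for $\SL(3)$ whose cardinality is manifestly equal to the number of non-elliptic webs with boundary $\vlambda$. Concretely, since $\vlambda$ is a sequence of $\omega_1$'s and $\omega_2$'s, one can use the tensor-product multiplicity formula to express $\dim \Inv(V(\vlambda))$ as a count of certain lattice paths (or equivalently, semistandard tableaux of shape $(a,a,a)$ with content determined by $\vlambda$). On the web side, I would construct an explicit bijection between non-elliptic webs with boundary $\vlambda$ and these lattice paths. The natural bijection grows a non-elliptic web from left to right by a sequence of generating moves at the boundary, matched to the steps of the path; the non-elliptic condition corresponds exactly to the constraint keeping the path inside the dominant chamber. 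Checking that this procedure is well-defined and bijective — in particular that every non-elliptic web is uniquely constructed this way, so that no elliptic face is ever introduced — is the technical heart of the argument. Combined with spanning, the equality of cardinalities forces linear independence and completes the proof.
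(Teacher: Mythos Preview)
Your proposal is correct and is essentially the original argument from \cite{Kuperberg:spiders} that the paper is citing: termination of the elliptic rewriting rules plus surjectivity of $\Psi$ gives spanning, while the bijection between non-elliptic webs and dominant minuscule paths gives the matching dimension count for linear independence. The paper's one-paragraph sketch emphasizes confluence (the diamond lemma) more than you do, but you are right that confluence is not logically required once you have the dimension count; the bijection you describe is exactly the one recorded later in the paper (end of \sec{s:geodesics}) in the dual language of $\CAT(0)$ diskoids versus minuscule paths. The paper also notes a second, much heavier proof via \thm{th:unitriang}, but that is not the route you are taking.
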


\thm{th:unitriang} implies \thm{th:sl3basis} as a corollary.  However,
it is much more complicated than other proofs of \thm{th:sl3basis}
\cite{Westbury:trivalent,Kuperberg:notdual}.

\section{Affine geometry}
\label{s:ageom}

\subsection{Weight-valued metrics and linkages}
\label{s:wmetric}

In the usual definition of a metric space, distances take values in
the non-negative real numbers $\R_{\ge 0}$.  However, Kapovich, Leeb,
Millson \cite{KLM:generalized} have a theory of metric spaces in
which distances take values in the dominant Weyl chamber of $G$.  Two of
the axioms of such a generalized metric space are easy to state:
$$d(x,x) = 0 \qquad d(x,y) = d(y,x)^*.$$
The third axiom, the triangle inequality, is different.  The main results
of Kapovich, Leeb, and Millson are generalized triangle inequalities that
are satisfied in buildings and generalized symmetric spaces.  On the one
hand, the triangle inequalities in the $A_1$ case are the usual triangle
inequality.  On the other hand, the inequalities in higher rank cases are
decidedly non-trivial.

In this article, we will adopt the viewpoint of weight-valued metric spaces
in order to discuss isometries and distance comparisons.  We will not need
the generalized triangle inequalities, but we will need isometries and
distance comparisons.  The definition of an isometry is straightforward.
As for distance comparisons, we will say that $\mu \le \lambda$ as a
distance if and only if $\mu \le \lambda$ in the usual partial order on
dominant weights, namely that $\lambda - \mu$ is a non-negative integer
combination of simple roots.  Thus, a ball of radius $\lambda$ is then a
finite union of spheres of radius $\mu \le \lambda$.  For one construction
we will define distances that take values in the dominant Weyl chamber,
instead of integral weights; and then we say that $\mu \le \lambda$ when
$\lambda - \mu$ is a non-negative real combination of simple roots.

In addition to isometries, we will be interested in partial isometries in
which only some distances are preserved.  For this purpose, we define a
\emph{linkage} to be an oriented graph $\Gamma$ whose edges are labelled by
dominant weights.  As with webs, an edge labelled by $\lambda$ is equivalent
to the opposite edge labelled by $\lambda^*$.  Let $v(\Gamma)$ be the set of
vertices of $\Gamma$.  Then one may attempt to define a distance $d(p,q)$
between any two points $p,q \in v(\Gamma)$ by taking the shortest total
distance of a connecting path.  However, since weights are only partially
ordered, this minimum may not be unique.  We will say that $\Gamma$ has
\emph{coherent geodesics} if the minimum distance $\min(d(p,q))$ between
any two vertices $p$ and $q$ is unique, and if that minimum distance is
the length of the edge $(p,q)$ when $\Gamma$ has that edge.  In this case
$\Gamma$ can be completed to another linkage $\Gamma_g$ which is a complete
graph, using all distances as weights.

\subsection{Configuration spaces}
\label{s:config}

Let $X$ be a weight-valued metric space, and let $\Gamma$ be a linkage as
in \sec{s:wmetric}.  Let $v(\Gamma)$ be the set of vertices of $\Gamma$.
Then we define the \emph{linkage configuration space} $Q(\Gamma,X)$ to be
the set of maps
$$f:v(\Gamma) \to X$$
such that $d(f(p),f(q))$ equals the weight of the edge from $p$ to
$q$, when there is such an edge.  If $X$ and $\Gamma$ both have a base
point, then $Q(\Gamma,X)$ is instead the configuration space of based maps.
Another possibility is that $\Gamma$ has a base edge of length $\lambda$
and $X$ has two base points at distance $\lambda$; then $Q(\Gamma,X)$
is again the configuration space of based maps.  We will be interested in
four types of linkages $\Gamma$:
\begin{description}
\item[1] A path or \emph{polyline}.
\item[2] A cycle or polygon.
\item[3] The 1-skeleton $\Gamma(D)$ of a tiled diskoid $D$ (\sec{s:diskoids})
with edges labelled by weights.
\item[4] The complete linkage $\Gamma_g(D)$, if $\Gamma(D)$ has coherent
geodesics.
\end{description}

There is one final type of configuration space that is sometimes useful.
If an edge $(p,q)$ has weight $\lambda$, then we can ask that
$$d(f(p),f(q)) \le \lambda$$
instead of
$$d(f(p),f(q)) = \lambda.$$
The result is the contractive configuration space $Q_c(\Gamma,X)$.

Suppose that $X = G/H$ for some group $G$ with a subgroup $H$, and that
each sphere $X(\lambda)$ around the base point is a double coset of $H$.
Let $\Gamma$ be a linkage and let $\Gamma_0$ be the same linkage with a
chosen base point $0$.  Then there is a fibration
$$Q(\Gamma_0,X) \longto Q(\Gamma,X) \longto X.$$
Similarly, if $\Gamma_e$ denotes the same linkage with a base edge $e$
of length $\lambda$ incident to $0$, then there is also a fibration
\begin{equation} \label{eq:basededge}
Q(\Gamma_e,X) \longto Q(\Gamma_0,X) \longto X(\lambda),
\end{equation}
where $X(\lambda) = Q(\lambda,X)$ is the sphere of radius $\lambda$ around
the (first) base point of $X$, and the second base point is an arbitrary
point in $X(\lambda)$.

If $f:\Gamma_2 \to \Gamma_1$ is a map between linkages, then there
is a restriction map,
\eq{e:pi}{\pi_{\Gamma_2}^{\Gamma_1}:Q(\Gamma_1,X) \to Q(\Gamma_2,X)}
between their configuration spaces.  We will be particularly interested
in this map when $\Gamma_1$ is a sublinkage of $\Gamma_2$ (for example
its boundary).

Suppose now that $\Gamma = \Gamma_1 \cup \Gamma_2$, and that $\Gamma_1
\cap \Gamma_2$ is either an edge or a vertex.  If we base $\Gamma_2$
(but not $\Gamma_1$) at this intersection, then the configuration space
$Q(\Gamma,X)$ is a twisted product:
$$Q(\Gamma,X) = Q(\Gamma_1,X) \twistedprod Q(\Gamma_2,X).$$
Informally, $\Gamma_2$ is either an arm attached to $\Gamma_1$ at a point
which can swing freely in any direction, or a flap attached to $\Gamma_1$
along a 1-dimensional hinge which can swing freely in the remaining
directions.

\subsection{Diskoids}
\label{s:diskoids}

Recall that a \emph{piecewise-linear diskoid} is a contractible, compact,
piecewise-linear region in the plane.  (We will not need diskoids that are
not piecewise-linear.  But if one were to consider them, the most natural
definition could be to make it a planar, cell-like continuum.)  Any diskoid
$D$ has a polygonal boundary $P$ with a boundary map $P \to D$, which however
is not an inclusion unless $D$ is either a point or a disk.  \fig{f:diskoid}
shows an example of a diskoid $D$ with its boundary $P$.

\begin{fullfigure}{f:diskoid}{A diskoid $D$ with boundary $P$}
\begin{tikzpicture}[scale=.5]
\coordinate (p1) at (-2,.5);    \coordinate (p2) at (-3,2);
\coordinate (p3) at (-3.5,0);   \coordinate (p4) at (0,0);
\coordinate (p5) at (.5,-2);    \coordinate (p6) at (2.5,-.5);
\coordinate (p7) at (2,1);      \coordinate (p8) at (-1,-3);
\coordinate (p9) at (1.5,-3);   \coordinate (p10) at (3,2.5);
\coordinate (p11) at (2.98,2.52);
\draw[web,double distance = .2cm]
    (p1) -- (p2) -- (p3) -- (p1) -- (p4) -- (p5) -- (p8) -- (p9) -- (p5) --
    (p6) -- (p7) -- (p10) -- (p11) -- (p7) -- (p4) -- cycle;
\draw[fill=lightgray]
    (p1) -- (p2) -- (p3) -- (p1) -- (p4) -- (p5) -- (p8) -- (p9) -- (p5) --
    (p6) -- (p7) -- (p10) -- (p7) -- (p4) -- cycle;
\node at (1.25,-.375) {$D$};
\node[anchor=south west] at (-1,.5) {$P$};
\end{tikzpicture}
\end{fullfigure}

Note that since a diskoid comes with an embedding in the plane, its boundary
$P$ is implicitly oriented, so that the edges of $P$ are cyclically ordered.
We will assume a clockwise orientation in this article.  Trees are diskoids,
and \fig{f:a1example} has an example of the polygonal boundary of a tree;
the polygon traverses each edge twice.

A diskoid $D$ can be tiled by polygons. Formally, a \emph{tiling} of $D$
is a piecewise-linear CW complex structure on $D$ with embedded 2-cells.
If $D$ is decorated in this way, then we define the graph $\Gamma(D)$ to be
its 1-skeleton.  Then, as above, $\Gamma(D)$ can be made into a linkage,
which means, explicitly, that the edges of $D$ are labelled by distances.
In this article we will not need to the label the faces (or 2-cells)
of a tiled diskoid to define its configuration space, but only because
the corresponding representation theory is multiplicity-free.  In future
work, the faces could also be labelled in order to define more restrictive
configuration spaces.  We will write $Q(D)$ for $Q(\Gamma(D))$ and $Q_g(D)$
for $Q(\Gamma_g(D))$.

In some cases, although not the most important cases, we will be interested
in diskoids with bubbles.  By definition, a \emph{diskoid with bubbles} is,
inductively, either a diskoid, or a one-point union of a smaller diskoid
with bubbles and either a line segment or a piecewise linear 2-sphere.
The extra line segments and 2-spheres are not embedded in the plane and
do not affect the boundary of the diskoid, even if the attachment point is
on the boundary.  The discussion of the previous paragraph applies equally
well to diskoids with bubbles.

Our interest in diskoids arises from the fact that they are geometrically
dual to webs.  As in the introduction, let $w$ be a web in $\fsp(G)$ with
boundary $\vlambda$.  Then it has a dual diskoid $D = D(w)$, with bubbles
if $w$ has closed components, and with a natural base point. To be precise,
$D$ has a vertex for every internal or external face of $w$; two vertices
are connected by an edge when the faces of $w$ are adjacent; and there
is a triangle glued to three edges whenever the dual edges of $w$ meet at
a vertex.  We label the edges of $D$ using the labels of the corresponding
edges of $w$; also, if an edge of $w$ is oriented, we transfer it to an
orientation of the dual edge of $D$ by rotating it counterclockwise.  As
a result, the boundary of the diskoid $D$ is the polygon $P(\vlambda)$.
\fig{f:a1example} shows an example of an $A_1$ web and its dual diskoid,
which in the $A_1$ case is always a tree.  \fig{f:a2example} shows an
example of an $A_2$ web and its dual diskoid, which happens to be a disk
because the corresponding web is connected.

In this construction, $D$ is always triangulated because $w$ is always
trivalent.  The vertices of $D$ are a weight-valued metric space, and by
linear extension the whole of $D$ is a Weyl-chamber-valued metric space.
We can also simplify this metric to an ordinary metric space by taking the
Euclidean length of the vector-valued distance.  Finally, suppose that
$w$ is an $A_2$ web (or a $B_2$ or $G_2$ web). Then $w$ is non-elliptic
if and only if $D$, in its ordinary metric, is $\CAT(0)$ in the sense
of Gromov \cite{Gromov:hyperbolic}.  This follows from the fact that $D$
is contractible and the condition that all complete angles in $D$ are at
least $2\pi$.

\subsection{Affine Grassmannians and buildings}

As before, let $G$ be a simple, simply-connected complex algebraic group
and let $\Gv$ be its Langlands dual group.  Let $\cO = \C[[t]]$ be the
ring of formal power series over $\C$ and let $\cK = \C((t))$ be its
fraction field.  Then
$$\Gr = \Gr(\Gv) = \Gv(\cK)/\Gv(\cO)$$
is the \emph{affine Grassmannian} for $\Gv$ with residue field $\C$.  It is
an ind-variety over $\C$, meaning that it is a direct limit of algebraic
varieties (of increasing dimension).  The affine Grassmannian $\Gr$ is
also a weight-valued metric space:  The double cosets $\Gv(\cO)\backslash
\Gv(\cK)/\Gv(\cO)$ are bijective with the cone $\Lambda_+$ of dominant
coweights of $\Gv$, which is the same as the cone of dominant weights of $G$.
More precisely, for each coweight $\mu$ of $\Gv$, there is an associated
point $t^\mu$ in the affine Grassmannian.  If $p,q$ are two arbitrary
points of the affine Grassmannian, then we can find $g \in \Gv(\cK)$
such that $gp = t^0$ and $gq = t^\mu$ for some unique dominant
coweight $\mu$.  Under this circumstance, we write $d(p,q) = \mu$.
So the action of $\Gv(\cK)$ preserves distances and $d(t^0, t^\mu) =
\mu$ for any dominant weight $\mu$.

The affine Grassmannian $\Gr$ is also a subset of the vertices $\Gr' =
v(\Delta)$ of an associated simplicial complex called an \emph{affine
building} $\Delta = \Delta(\Gv)$ \cite{Ronan:buildings} whose type
is the extended Dynkin type of $\Gv$.   The simplices of this affine
building are given by parahoric subgroups of the affine Kac-Moody group
$\widehat{\Gv}$.  For a detailed description of affine buildings from
this perspective, see \cite{GL:cycles}.

An affine building $\Delta$ satisfies the following axioms:
\begin{description}
\item[1] The building $\Delta$ is a non-disjoint union of \emph{apartments},
each of which is a copy of the Weyl alcove simplicial complex of $\Gv$.

\item[2] Any two simplices of $\Delta$ of any dimension are both contained
in at least one apartment $\Sigma$.

\item[3] Given two apartments $\Sigma$ and $\Sigma'$ and two simplices
$\alpha,\alpha' \in \Sigma \cap \Sigma'$, there is an isomorphism $f:\Sigma
\to \Sigma'$ that fixes $\alpha$ and $\alpha'$ pointwise.
\end{description}

The axioms imply that the vertices of $\Delta$, denoted $\Gr'$, are
canonically colored by the vertices of the extended Dynkin diagram $\hat{I}
= I \sqcup \{0\}$ of $\Gv$, or equivalently the vertices of the standard
Weyl alcove $\delta$ of $\Gv$.   Moreover, every maximal simplex of
$\Delta$ is a copy of $\delta$; it has exactly one vertex of each color.
The affine Grassmannian consists of those vertices colored by $0$ and by
minuscule nodes of the Dynkin diagram of $\Gv$.

The axioms also imply that $v(\Delta)$, and more generally the realization
$|\Delta|$ of $\Delta$, have a metric taking values in Weyl chamber.
(But not necessarily integral weights as one sees in $\Gr$.)  Namely, if
$p,q \in |\Delta|$, then $p,q \in |\Sigma|$ for an apartment $\Sigma$, and
after a suitable automorphism $p = q + \lambda$ for some vector $\lambda$ in
the dominant Weyl chamber.  We then define $d(p,q) = \lambda$.  (The metric
has coherent geodesics, and it extends the metric defined above for $\Gr$.)
We will need the following fact.

\begin{lemma} If $p,q \in |\Delta|$, then every geodesic path $\gamma$
from $p$ to $q$ is contained in every apartment $\Sigma$ such that $p,q
\in |\Sigma|$.
\end{lemma}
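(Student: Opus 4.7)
The plan is to combine two standard facts about affine buildings: that the whole realization $|\Delta|$ is $\CAT(0)$ (as cited from \cite{BT:local1} in the introduction), and that each apartment is isometrically embedded as a Euclidean space with its Weyl-alcove structure. The first fact gives uniqueness of geodesics in $|\Delta|$, the second gives existence of a Euclidean straight-line segment in each apartment containing the endpoints, and the lemma then follows by identifying these two.

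More concretely, first I would fix $p,q \in |\Delta|$ and any apartment $\Sigma$ with $p,q \in |\Sigma|$. Since $|\Sigma|$ is (the realization of) the Weyl alcove simplicial complex of $\Gv$, it is canonically identified with a Euclidean space, in which there is a unique straight-line segment $\gamma_\Sigma$ from $p$ to $q$. One checks that $\gamma_\Sigma$ is a geodesic in $|\Delta|$: the induced metric agrees with the intrinsic Euclidean metric of $|\Sigma|$, because the building metric was defined in the text exactly by declaring $d(p,q) = \lambda$ when $p = q+\lambda$ inside some apartment, and axiom (3) together with the folding/retraction it provides guarantees that this definition is independent of the chosen apartment and that any two apartments sharing a pair of points are isometric in a way fixing those points. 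Thus the length of $\gamma_\Sigma$ in $|\Delta|$ equals $d(p,q)$, so $\gamma_\Sigma$ is in fact a geodesic of $|\Delta|$.

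Now I invoke the $\CAT(0)$ property: in a $\CAT(0)$ space, the geodesic between any two points is unique. Hence any geodesic $\gamma$ from $p$ to $q$ in $|\Delta|$ must coincide with $\gamma_\Sigma$, and in particular is contained in $|\Sigma|$. Since $\Sigma$ was an arbitrary apartment containing $\{p,q\}$, the conclusion follows.

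The step I expect to be the main nuisance is the verification that apartments are isometrically embedded, i.e., that the Euclidean segment $\gamma_\Sigma$ really realizes the global distance $d(p,q)$ in $|\Delta|$. This is the content of the standard retraction argument: pick an alcove $\alpha \subset \Sigma$ containing a small initial piece of $\gamma_\Sigma$, consider the retraction $r \colon |\Delta| \to |\Sigma|$ centered at $\alpha$ provided by axiom (3), note that $r$ is $1$-Lipschitz and the identity on $|\Sigma|$, and conclude that no path in $|\Delta|$ from $p$ to $q$ can be shorter than $\gamma_\Sigma$. Everything else is either a direct quotation of $\CAT(0)$ geodesic uniqueness or a repackaging of the building axioms.
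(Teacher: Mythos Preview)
The paper does not actually supply a proof of this lemma; it is stated as a known fact about affine buildings and then used later. Your argument via the $\CAT(0)$ property and the retraction-based isometric embedding of apartments is correct and is the standard way to see it for the locally Euclidean metric.

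One caveat worth flagging: the lemma is stated immediately after the \emph{weight-valued} metric is introduced and before the $\CAT(0)$ theorem, and its later application (in the proof of \thm{th:extend}) is to combinatorial edge-path geodesics, which are weight-valued geodesics but typically not $\CAT(0)$ geodesics (they zigzag). For such paths the $\CAT(0)$ uniqueness shortcut does not apply directly, since weight-valued geodesics between two points are not unique. Your retraction argument still carries the day, though: the retraction $r\colon |\Delta|\to|\Sigma|$ centered at a chamber of $\Sigma$ preserves weight-valued distances to points of that chamber and is non-increasing for all weight-valued distances, so it fixes any weight-valued geodesic from $p$ to $q$ pointwise, forcing it into $\Sigma$. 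If you want your write-up to cover the form of the lemma actually used later in the paper, it would be worth stating this extension explicitly rather than relying on $\CAT(0)$ uniqueness alone.
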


A subtle feature of the above affine building $\Delta$ is that it has two
very different geometries.  As an ordinary simplicial complex, its vertex
set $\Gr'$ is discrete, and $\Gr'$ has a combinatorial, weight-valued metric.
The vertex set $\Gr'$ is also naturally an algebraic ind-variety over $\C$,
as is the set of vertices of any given color or the set of simplices of
$\Delta$ of any given type.  This second geometry endows $\Gr'$ with both
a Zariski topology and an analytic topology.  Among the relations between
these two geometries, we will need the following fact.

\begin{proposition} The algebraic-geometric closure $\bar{\Gr'(\lambda)}$
of the sphere $\Gr'(\lambda)$ of radius $\lambda$ is the set of all points
in the metric ball of radius $\lambda$ that have the same color as $\lambda$.
\label{p:sphereball} \end{proposition}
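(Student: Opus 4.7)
The plan is to identify the sphere $\Gr'(\lambda)$ as a Schubert cell in a partial affine flag variety of $\Gv$ and then invoke the standard Schubert closure formula. Let $c$ denote the color of $\lambda$, i.e.\ the class of $\lambda$ in the coweight lattice of $\Gv$ modulo the coroot lattice $L$. The set $\Gr'_c$ of color-$c$ vertices of $\Delta$ is a single $\Gv(\cK)$-orbit, and so inherits the structure of a partial affine flag ind-variety $\Gv(\cK)/P_c$, where $P_c$ is the parahoric subgroup stabilizing the color-$c$ vertex $t^\lambda$. Under this identification the sphere $\Gr'(\lambda) = \Gv(\cO) \cdot t^\lambda$ is exactly the $\Gv(\cO)$-orbit through the coset $t^\lambda P_c / P_c$---a Schubert cell, indexed via the Cartan decomposition $\Gv(\cO)\backslash \Gv(\cK)/P_c$ by the dominant coweight $\lambda$ (which is of color $c$).

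First I would invoke the standard Bruhat--Schubert closure formula for $\Gv(\cO)$-orbits on partial affine flag varieties (the $c=0$ case is the classical calculation appearing in \cite{MV:geometric}; for general $c$ it falls under Kac--Moody Schubert theory, cf.\ \cite{GL:cycles}): the closure of the Schubert cell indexed by $\lambda$ is the disjoint union of the Schubert cells indexed by dominant coweights $\mu$ with $\mu \le \lambda$ in the usual partial order, where $\mu \le \lambda$ means that $\lambda-\mu$ is a nonnegative integer combination of simple coroots of $\Gv$. Under the identification above this reads
\[
\bar{\Gr'(\lambda)} \;=\; \bigsqcup_{\mu\text{ dom.},\; \mu \le \lambda} \Gr'(\mu).
\]

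Second, I would match this union with the metric-ball description in the statement. By definition, a vertex $p \in \Gr'$ lies in the metric ball of radius $\lambda$ around $t^0$ iff $d(t^0,p) = \mu$ for some dominant coweight $\mu \le \lambda$, i.e.\ iff $p \in \Gr'(\mu)$ for such a $\mu$. The condition $\mu \le \lambda$ forces $\lambda-\mu \in L$, hence $c(\mu) = c(\lambda) = c$, so every sphere appearing in the union lies in $\Gr'_c$; conversely every color-$c$ vertex in the ball lies in some such $\Gr'(\mu)$. So the right-hand side above is exactly the set of color-$c$ vertices in the metric ball of radius $\lambda$, as required.

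The main obstacle is the Schubert closure formula in the appropriate generality. The cleanest route I would take is to lift to the full affine flag variety $\Gv(\cK)/I$, where Iwahori-orbit closures are controlled by the Bruhat order on the affine Weyl group $W^{\mathrm{aff}} = W \ltimes L$, and then push forward along the projection $\Gv(\cK)/I \to \Gv(\cK)/P_c$. The induced partial order on minimal-length representatives of $W\backslash W^{\mathrm{aff}}/W_{P_c}$ is exactly the standard partial order on dominant coweights of color $c$, which yields the formula used in the first step and completes the argument.
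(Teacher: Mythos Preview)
The paper does not actually supply a proof of this proposition: it is stated as a known structural fact about $\Gr'$ and the text moves on immediately. So there is no ``paper's own proof'' to compare against; your task is really to supply the standard argument, and your plan does that correctly. Identifying $\Gr'(\lambda)$ with a $\Gv(\cO)$-orbit (Schubert cell) in the partial affine flag variety of color-$c$ vertices, invoking the closure relation $\bar{\Gr'(\lambda)} = \bigsqcup_{\mu \le \lambda,\ \mu\ \text{dominant}} \Gr'(\mu)$, and then reading off that this is exactly the color-$c$ part of the metric ball is precisely the expected route, and it matches how the paper uses the result (e.g.\ in \sec{s:paths}, where closedness of minuscule spheres is deduced).

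One small technical point to tighten: in your final paragraph you write the affine Weyl group as $W^{\mathrm{aff}} = W \ltimes L$ with $L$ the coroot lattice, but since $\Gv$ here is adjoint the Iwahori double cosets are indexed by the \emph{extended} affine Weyl group $\tilde W = W \ltimes X_*(T)$, and the coweights of nonzero color lie in $X_*(T) \setminus L$. The closure formula you want still holds component by component (each color-$c$ piece $\Gr'_c$ is a connected component of the full affine flag variety for the relevant parahoric, and Bruhat order restricts correctly), so the argument goes through unchanged once you replace $W^{\mathrm{aff}}$ by $\tilde W$; just be sure to phrase it that way.
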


An affine building $\Delta$ has a third geometry which is related to
the weight-valued metric but is not the same.  Namely, we can give the
Weyl alcove $\delta$ its standard Euclidean structure, and consider the
induced metric on the realization $|\Delta|$ of $\Delta$.  This locally
Euclidean metric can also be defined as $||d(p,q)||_2$, where $d(p,q)$ is
the weight-valued metric on $|\Delta|$.

\begin{theorem}[Bruhat-Tits \cite{BT:local1}] Every affine building is a
$\CAT(0)$ space with respect to its locally Euclidean metric.
\label{th:cat0} \end{theorem}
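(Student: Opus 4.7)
The plan is to reduce the $\CAT(0)$ property to two standard ingredients: uniqueness of geodesics in $|\Delta|$ and the Cartan--Hadamard local-to-global principle. First I would show that $|\Delta|$ is a complete geodesic space. Given $p,q \in |\Delta|$, axiom (2) supplies an apartment $\Sigma$ containing both, and since $|\Sigma|$ is isometric to a Euclidean space tessellated by Weyl alcoves, the straight segment from $p$ to $q$ inside $|\Sigma|$ is a geodesic in $|\Delta|$. The preceding lemma guarantees that this segment is contained in every apartment containing both endpoints, so it is intrinsic to $|\Delta|$ and independent of the choice of $\Sigma$. Any other geodesic joining $p$ and $q$ also lies in some common apartment by axiom (2), where it must coincide with the Euclidean segment by the lemma together with axiom (3); hence geodesics are unique. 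Contracting $|\Delta|$ onto any base vertex along these unique geodesics shows in particular that $|\Delta|$ is simply connected.

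The main obstacle is verifying the local $\CAT(0)$ condition. The space of directions at a point $x \in |\Delta|$ decomposes as a spherical join whose essential factor is the link of a vertex, and this link is the residue spherical building at that vertex. One must show inductively that every spherical building is $\CAT(1)$, by running an apartment-and-retraction argument structurally parallel to the one here, but in spherical rather than Euclidean geometry: pairs of simplices still lie in a common apartment, and one uses folding retractions onto a fixed spherical apartment to verify the $\CAT(1)$ four-point condition by comparison with the sphere. The standard cone/link criterion then converts $\CAT(1)$ of all links into local $\CAT(0)$ at every point of $|\Delta|$.

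Once local $\CAT(0)$ is in hand, the Cartan--Hadamard theorem---a complete, simply connected, locally $\CAT(0)$ geodesic space is globally $\CAT(0)$---finishes the proof. The hard part is thus the parallel induction establishing $\CAT(1)$ for spherical buildings; once that is available, combining it with the apartment axioms and the lemma on geodesics lying in common apartments makes the rest of the argument essentially formal.
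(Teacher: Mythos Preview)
The paper does not prove this theorem; it is quoted as a classical result of Bruhat--Tits and cited to \cite{BT:local1} without argument. So there is no ``paper's own proof'' against which to compare your attempt.

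That said, your outline is a legitimate and well-known route to the result, essentially the Bridson--Haefliger/Davis approach: establish that $|\Delta|$ is a complete, uniquely geodesic, simply connected space, reduce local $\CAT(0)$ to the links being $\CAT(1)$, identify links with spherical buildings, and invoke Cartan--Hadamard. Your use of the preceding lemma to pin geodesics inside apartments is the right idea, and your honest flag that the inductive $\CAT(1)$ statement for spherical buildings is where the real work sits is accurate.

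For comparison, the original Bruhat--Tits argument is more direct and does not pass through Cartan--Hadamard or an induction on spherical links. It constructs, for each apartment $\Sigma$ and each chamber $C \subset \Sigma$, a retraction $\rho_{\Sigma,C}: |\Delta| \to |\Sigma|$ and proves by a folding argument that $\rho_{\Sigma,C}$ is distance-nonincreasing. Given a geodesic triangle in $|\Delta|$, one retracts it into a single Euclidean apartment and compares directly with the Euclidean comparison triangle; the nonexpanding property of the retraction immediately yields the $\CAT(0)$ inequality. This approach is shorter and avoids the spherical-building induction, at the cost of relying on the combinatorics of retractions. Your approach buys modularity (the same machinery handles hyperbolic and other metric buildings once the link curvature is known), while the retraction approach buys a self-contained one-step proof tailored to the affine case.
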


If $G = \SL(n)$ and thus $\Gv = \PGL(n)$, then $\Gr = \Gr'$, and there
is a simple description of $\Delta$.  Namely, a finite set of vertices
in $\Gr$ subtends a simplex if and only if the distances between them are
all minuscule.

Finally, to close a circle, let $L(\vlambda)$ be a polyline whose sides
are labelled by
$$\vlambda = (\lambda_1,\lambda_2,\dots,\lambda_n),$$
based at the beginning.  Let $P(\vlambda)$ be the corresponding polygon,
based between $\lambda_n$ and $\lambda_1$.  Then the contractive polyline
configuration space
$$\Gr(\vlambda) = Q_c(L(\vlambda),\Gr)$$
is the domain of the convolution morphism.  The restriction map coming
from the projection onto the boundary $L(\vlambda) \to \pt$, or
$$\pi_\pt^{L(\vlambda)}:Q_c(L(\vlambda),\Gr) \to \Gr,$$
is the convolution morphism.  In keeping with the standard notation,
we will denote it by
$$m_\vlambda = \pi_\pt^{L(\vlambda)}.$$
Meanwhile the contractive polygon configuration space
$$Q_c(P(\vlambda),\Gr) = F(\vlambda) = m_\vlambda^{-1}(t^0)$$
is the Satake fibre.  As another bit of notation, if $\Gamma$ is a linkage,
we will elide the $\Gr$ and write $Q(\Gamma)$ for $Q(\Gamma,\Gr)$, etc.

\section{Geometric Satake for tensor products of minuscule representations}
\label{s:minuscule}

\subsection{Minuscule paths and components of Satake fibres}
\label{s:paths}

The full geometric Satake correspondence, \thm{th:satake}, simplifies
considerably when the weights are minuscule.  In this special case, Haines
\cite[Thm. 3.1]{Haines:equidim} showed that all components of $F(\vlambda)$
are of maximal dimension.  We can use his ideas to give an explicit
description of these components using minuscule paths.  In addition to
previous notation, let $W$ be the Weyl group of $G$.

Let $\lambda$ be a minuscule dominant weight.  Then there are no dominant
weights less than $\lambda$, so the sphere of radius $\lambda$ equals
the ball of radius $\lambda$.  Hence the sphere $\Gr(\lambda)$ is closed
in the algebraic geometry of $\Gr$ by \prop{p:sphereball}, and thus it is
projective and smooth.  In fact, $\Gv$ acts transitively on $\Gr(\lambda)$.
The stabilizer of $t^\lambda$ is $M(\lambda)$, the opposite maximal proper
parabolic subgroup corresponding to the minuscule weight $\lambda$.  Thus
$\Gr(\lambda)$ is isomorphic to the partial flag variety $\Gv/M(\lambda)$.

More generally, if $\Gamma$ is a \emph{minuscule linkage}, meaning that
all of its edges are minuscule, then
$$Q(\Gamma) = Q_c(\Gamma) = \bar{Q(\Gamma)}.$$

Let
$$\vlambda = (\lambda_1, \ldots, \lambda_n)$$
be a sequence of minuscule dominant weights.  A \emph{minuscule path}
(ending at 0) of type $\vlambda$ is a sequence of dominant weights
$$\vmu = (\mu_0,\mu_1,\mu_2,\ldots,\mu_n)$$
such that $\mu_k - \mu_{k-1} \in W\lambda_k$ for every $k$, and such that
$$\mu_0 = \mu_n = 0.$$
In other words, the $k$th step of the path $\vmu$ is a weight of
$V(\lambda_k)$, and the path is restricted to the dominant Weyl chamber
$\Lambda_+$.  Minuscule paths are a special case of Littelmann paths
\cite{Littelmann:paths}, but it was much earlier folklore knowledge that
the number of minuscule paths of type $\vlambda$ is the dimension of
$\Inv(V(\vlambda))$. (See Humphreys \cite[Ex. 24.9]{Humphreys:gtm}, and
use induction.)

\begin{fullfigure}{f:fan}{The fan diskoid $A(\vlambda,\vmu)$}
\begin{tikzpicture}[scale=.75,draw=darkred,semithick]
\draw[midto] (0,0) to node[below left=-.5ex] {$\lambda_1=\mu_1$} (-2,1);
\draw[midto] (-2,1) to node[below left] {$\lambda_2$} (-3,3);
\draw[midto] (-3,3) to node[above left] {$\lambda_3$} (-2,5);
\draw[midto] (-2,5) to node[above] {$\lambda_4$} (0,6);
\draw (0,6) -- (1,6);
\draw (3,3) -- (3,2);
\draw[midto] (3,2) to node[below right] {$\lambda_{n-1}$} (2,0);
\draw[midto] (2,0) to node[below right] {$\lambda_n = \mu^*_{n-1}$} (0,0);
\draw[midto] (0,0) to node[above right] {$\mu_2$} (-3,3);
\draw[midto] (0,0) to node[above right] {$\mu_3$} (-2,5);
\draw[midto] (0,0) to node[above right] {$\mu_4$} (0,6);
\draw[midto] (0,0) to node[above=.75ex] {$\mu_{n-2}$} (3,2);
\fill (2.14,5.04) circle (.035); \fill (2.3,4.8) circle (.035);
\fill (2.46,4.56) circle (.035);
\fill (1.04,3.04) circle (.035); \fill (1.2,2.8) circle (.035);
\fill (1.36,2.56) circle (.035);
\end{tikzpicture}
\end{fullfigure}

Given a minuscule path $\vmu$ of type $\vlambda$, we define a based diskoid
$A(\vlambda,\vmu)$ in the shape of a fan, whose the boundary is the polygon
$P(\vlambda)$ and whose ribs are labelled by $\vmu$, as in \fig{f:fan}.
Then there is a natural inclusion
$$Q(A(\vlambda,\vmu)) \subseteq F(\vlambda).$$
The following result is implicit in the work of Haines \cite{Haines:equidim}.

\begin{theorem} For each minuscule path $\vmu$, the fan configuration space
$Q(A(\vlambda,\vmu))$ is a dense subset of one component of $F(\vlambda)$.
The induced correspondence is a bijection between minuscule paths and
components of $F(\vlambda)$.
\label{th:haines} \end{theorem}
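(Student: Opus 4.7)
The plan is to stratify $F(\vlambda)$ by the dominant-weight distances from the base point $t^0$, and to identify the resulting strata with the fan configuration spaces. For any sequence $\vmu = (\mu_0, \ldots, \mu_n)$ of dominant weights with $\mu_0 = \mu_n = 0$, set
$$F(\vlambda, \vmu) = \{(p_0, \ldots, p_n) \in F(\vlambda) : d(t^0, p_k) = \mu_k \text{ for all } k\}.$$
By construction $F(\vlambda, \vmu) = Q(A(\vlambda, \vmu))$ as a subset of the Satake fibre, and the collection $\{F(\vlambda, \vmu)\}$ is a disjoint locally closed partition of $F(\vlambda)$. I would first verify that $F(\vlambda, \vmu)$ is nonempty exactly when $\vmu$ is a minuscule path: nonemptiness is witnessed by realizing the path inside any apartment through $t^0$; conversely, given a configuration, the triple $t^0, p_{k-1}, p_k$ lies in a common apartment (any three points of a building do), and there the minuscule-length step from $\mu_{k-1}$ to $\mu_k$ forces $\mu_k - \mu_{k-1} \in W\lambda_k$.

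Next, I would show each nonempty stratum is smooth and irreducible by inducting on $n$. Let $X_k$ denote the partial configuration space of $(p_0, \ldots, p_k)$ subject to the relevant constraints; the projection $X_k \to X_{k-1}$ forgetting $p_k$ has as fibre the set of $p_k$ with $d(p_{k-1}, p_k) = \lambda_k$ and $d(t^0, p_k) = \mu_k$. Since $\lambda_k$ is minuscule, the sphere of radius $\lambda_k$ through $p_{k-1}$ is the smooth projective minuscule flag variety $\Gv/M(\lambda_k)$, and the fibre is its intersection with the semi-infinite $\Gv(\cO)$-orbit of radius $\mu_k$ around $t^0$. This is a Mirkovi\'c--Vilonen-style cell, isomorphic to an affine space of dimension $\braket{\rhov, \mu_k + \lambda_k - \mu_{k-1}}$; summing telescopes (using $\mu_0=\mu_n=0$) to
$$\dim F(\vlambda, \vmu) = \sum_{k=1}^n \braket{\rhov, \lambda_k},$$
which by Haines' equidimensionality theorem \cite{Haines:equidim} is exactly the dimension $d$ of every component of $F(\vlambda)$. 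The main obstacle is establishing this step-$k$ fibre-dimension formula in general, which is really the substance of Haines' count.

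Finally, I would conclude the bijection. Each $F(\vlambda, \vmu)$ is an irreducible locally closed subvariety of $F(\vlambda)$ of the maximal dimension $d$, so its closure is a single component of $F(\vlambda)$, in which it is dense. Two distinct minuscule paths give disjoint strata, and these cannot both be dense in the same irreducible component of dimension $d$: their relative interiors would then be open dense subsets of the component, forced to meet. Hence $\vmu \mapsto \overline{F(\vlambda, \vmu)}$ is injective. For surjectivity, every component $C$ is an irreducible variety of dimension $d$, so its generic point lies in some $F(\vlambda, \vmu)$, which must itself be $d$-dimensional, and $C = \overline{F(\vlambda, \vmu)}$.
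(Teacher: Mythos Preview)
Your overall strategy matches the paper's: stratify $F(\vlambda)$ by the distances $\mu_k = d(t^0,p_k)$, identify each stratum with the fan configuration space $Q(A(\vlambda,\vmu))$, show each nonempty stratum is smooth irreducible of dimension $d(\vlambda)$, and conclude that the closures of the strata are exactly the components. The paper carries this out by expressing $Q(A(\vlambda,\vmu))$ as an iterated twisted product of edge-based triangle spaces $Q(T_e(\mu_{k-1},\lambda_k,\mu_k))$ and proving a key lemma (\lem{l:smooth}) analyzing these triangle spaces directly.

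Two of your intermediate claims are wrong as stated. First, it is \emph{not} true that any three points of an affine building lie in a common apartment; already in the $A_1$ tree a tripod gives a counterexample. So your converse argument (nonempty stratum $\Rightarrow$ $\vmu$ is a minuscule path) does not work as written. The paper handles this inside \lem{l:smooth}: it identifies $Q(T_e(\mu,\lambda,\nu))$ with an $M_+(\mu)$-orbit on the minuscule flag variety $\Gr(\lambda)\cong \Gv/M(\lambda)$, and shows by a coroot computation that the only orbits occurring are those with $\nu = \mu + w\lambda$ for some $w\in W$. Second, the step-$k$ fibre is not in general an affine cell: when $\mu_{k-1}=0$ the fibre is all of $\Gr(\lambda_k)$, a projective flag variety. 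What is true, and what the argument actually needs, is that the fibre is a single $M_+(\mu_{k-1})$-orbit, hence smooth and irreducible, of dimension $\braket{\mu_k - \mu_{k-1} + \lambda_k,\rhov}$.

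Finally, you invoke Haines' equidimensionality as a black box to match $\dim F(\vlambda,\vmu)$ with $\dim F(\vlambda)$, but this is circular in spirit: the paper is effectively \emph{reproving} that equidimensionality. Once each stratum is shown to be smooth of dimension $\sum_k\braket{\lambda_k,\rhov}$, the general principle you state (an algebraic variety partitioned into equidimensional smooth locally closed pieces has those closures as its components) finishes the job without any further input.
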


The key to the proof of this theorem is the following lemma.

\begin{lemma} Let
$$T_e(\mu,\lambda,\nu) \;\;=\;\;
\begin{tikzpicture}[baseline,semithick,darkred]
\draw[midto] (-1,0) -- (1,-0.5) node[black,midway,below] {$\mu$};
\draw[midto] (-1,0) -- (1.5,0.5) node[black,midway,above] {$\nu$};
\draw[midto] (1,-0.5) -- (1.5,0.5) node[black,midway,right] {$\lambda$};
\end{tikzpicture}$$
be a triangle with a minuscule edge $\lambda$, based at the edge $e$ of
length $\mu$.  Then $Q(T_e(\mu,\lambda,\nu))$ is non-empty if and only if
there exists $w \in W$ such that $\mu + w\lambda = \nu$.  If it is non-empty,
then it is smooth and has complex dimension $\braket{\nu-\mu+\lambda,\rhov}$.
\label{l:smooth} \end{lemma}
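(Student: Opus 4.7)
The plan is to interpret $Q(T_e(\mu,\lambda,\nu))$ as a concrete subvariety of the affine Grassmannian and then apply the structure theory of minuscule orbits. Using $\Gv(\cK)$-equivariance of the weight-valued metric, first normalize so that $p_0 = t^0$ and $p_1 = t^\mu$; this identifies
$$Q(T_e(\mu,\lambda,\nu)) = \Gr(\nu) \cap t^\mu \Gr(\lambda).$$
Because $\lambda$ is minuscule, $\Gr(\lambda) = \bar{\Gr(\lambda)} \cong \Gv/M(\lambda)$ is smooth and projective, and so is its translate $t^\mu \Gr(\lambda) \subset \Gr$.

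For the ($\Leftarrow$) direction of non-emptiness, if $\nu = \mu + w\lambda$ for some $w \in W$, then $t^\nu \in Q(T_e)$: indeed, $d(t^0,t^\nu) = \nu$ since $\nu$ is dominant, and $d(t^\mu,t^\nu) = \lambda$ since $\nu - \mu = w\lambda \in W\lambda$ has dominant representative $\lambda$. For ($\Rightarrow$), given $p_2 \in Q(T_e)$, use the building axioms to choose an apartment $\Sigma$ containing $t^0, t^\mu, p_2$ and identify $\Sigma$ with the standard apartment so that $t^0 \mapsto 0$ and $t^\mu \mapsto \mu$; then $p_2$ becomes a coweight $\eta$ with $\eta \in W\nu$ and $\eta - \mu \in W\lambda$, so that $\nu$ is the dominant representative of $\mu + w\lambda$ for some $w \in W$. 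To upgrade this to a direct equality $\nu = \mu + w'\lambda$, combine the minuscule tensor product decomposition
$$V(\mu) \tensor V(\lambda) = \bigoplus_{\substack{w' \in W/W_\lambda \\ \mu + w'\lambda \text{ dominant}}} V(\mu + w'\lambda)$$
with the Parthasarathy-Ranga Rao-Varadarajan theorem: the dominant representative of any $\mu + w\lambda$ appears as a summand on the right, and for minuscule $\lambda$ every such summand is of the form $V(\mu + w'\lambda)$ with $\mu + w'\lambda$ itself dominant.

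For smoothness and the dimension count, I use that $\Gv(\cO)$ acts on $t^\mu \Gr(\lambda)$ by isometries from $t^0$, stratifying this smooth projective variety into the locally closed orbits $\Gr(\nu) \cap t^\mu \Gr(\lambda) = Q(T_e(\mu,\lambda,\nu))$, one for each summand $V(\nu)$ of $V(\mu) \tensor V(\lambda)$. Hence $Q(T_e)$ is a smooth homogeneous $\Gv(\cO)$-orbit, and its complex dimension is read off from the corresponding Schubert cell in the minuscule flag variety $\Gv/M(\lambda)$ (equivalently, from the top-dimensional Mirkovi\'c-Vilonen cycle in the minuscule setting), yielding the claimed $\braket{\nu - \mu + \lambda, \rhov}$.

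The main obstacle I anticipate is the combinatorial upgrade in the ($\Rightarrow$) direction, passing from ``$\nu$ is the dominant representative of some $\mu + w\lambda$'' to the direct equality ``$\nu = \mu + w'\lambda$''; this is precisely where the minuscule assumption and PRV come in. Once that is in place, the smoothness and the dimension formula follow routinely from the $\Gv(\cO)$-orbit stratification of $t^\mu \Gr(\lambda)$.
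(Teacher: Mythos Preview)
Your argument for smoothness and dimension has a genuine gap: $\Gv(\cO)$ does \emph{not} act on $t^\mu\Gr(\lambda)$. The group $\Gv(\cO)$ stabilizes $t^0$ but not $t^\mu$, so it does not preserve the sphere of radius $\lambda$ around $t^\mu$. In particular, $Q(T_e) = \Gr(\nu) \cap t^\mu\Gr(\lambda)$ is not a $\Gv(\cO)$-orbit; the $\Gv(\cO)$-orbit through any of its points is all of $\Gr(\nu)$. So the pieces of your ``stratification'' are not homogeneous under the group you name, and smoothness does not follow from what you wrote.

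The paper fixes this by reversing the normalization: it takes the base edge to be $(t^{-\mu}, t^0)$ rather than $(t^0, t^\mu)$. Then the free vertex lives in $\Gr(\lambda)$, the sphere around $t^0$, which is the finite-dimensional partial flag variety $\Gv/M(\lambda)$. The relevant group is now the parabolic $M_+(\mu) = \mathrm{Stab}_{\Gv}(t^{-\mu}) \subset \Gv \subset \Gv(\cO)$; being a subgroup of $\Gv$, it automatically fixes $t^0$ and hence acts on $\Gr(\lambda)$. The paper identifies $Q(T_e)$ with a single $M_+(\mu)$-orbit on $\Gv/M(\lambda)$, indexed by a double coset in $W(\mu)\backslash W / W(\lambda)$, and reads off smoothness and the dimension $\braket{w\lambda + \lambda,\rhov}$ from the length of the maximal-length representative. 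You could salvage your normalization by using the common stabilizer $\Gv(\cO)\cap t^\mu\Gv(\cO)t^{-\mu}$, but the paper's choice makes the group finite-dimensional and the orbit analysis transparent.

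Your apartment argument for the $(\Rightarrow)$ direction is also under-justified as written: three arbitrary vertices of a building need not lie in a common apartment. What makes it work here is precisely that $\lambda$ is minuscule, so $t^\mu$ and $p_2$ are adjacent vertices (they span a $1$-simplex), and then the building axiom applied to that simplex and the vertex $t^0$ produces the apartment; you should say this. The paper bypasses both this step and your PRV appeal: it proves directly, by a two-case check on each simple coroot $\alpha_i^\vee$ (according to whether $s_i\mu = \mu$ or not), that $\mu + a\lambda$ is already dominant whenever $a$ is the maximal-length representative of its $W(\mu)$-orbit on $W/W(\lambda)$. This elementary calculation simultaneously establishes the non-emptiness criterion and pins down which orbit $Q(T_e)$ is.
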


\begin{proof}
Let $W(\mu)$ denote the stabilizer of $\mu$ in the Weyl group.  It is a
parabolic subgroup of $W$.

Let us choose the base edge in $\Gr$ to be the edge connecting $t^{-\mu}$
and $t^0$.  Then the edge based configuration space $Q(T_e(\mu, \lambda,
\nu))$ is a subvariety of $\Gr(\lambda)$ since there is only one free
vertex.  In fact
$$Q(T_e(\mu, \lambda, \nu)) =
    \{ p \in \Gr(\lambda) | d(t^{-\mu}, p) = \nu \}.$$
Let $A$ denote the set $W/W(\lambda)$, which we regard as a poset
using the opposite Bruhat order.  With this order, $A$ becomes
the poset of $B$-orbits on $\Gr(\lambda) = \Gv/M(\lambda)$, where $B$
is the Borel subgroup of $\Gv$.  We will be interested in the action of
$W(\mu)$ on $A$ by left multiplication.  The quotient $W(\mu) \setminus
A$ is the set of $M_+(\mu)$ orbits on $\Gr(\lambda)$, where $M_+(\mu) =
\mathrm{Stab}_\Gv(t^{-\mu})$ is the parabolic subgroup corresponding to
the minuscule weight $\mu$.

Hence we can write any point $p$ of $\Gr(\lambda)$ as $p = g t^{a \lambda}$
where $g \in M_+(\mu)$ and $a \in A$ is chosen to be a maximal length
representative for the orbit of $W(\mu)$.  The action of
$M_+(\mu)$ on $\Gr$ stabilizes $t^{-\mu}$ so
$$d(t^{-\mu}, gt^{a \lambda})
    = d(t^{-\mu}, t^{a\lambda}) = d(t^0, t^{\mu + a \lambda}).$$
Now, we claim that $\mu + a \lambda$ is always dominant.  Let us write
$a = [w]$ for $w \in W$.  We must check that
$$\braket{\mu + w\lambda, \alphav_i} =
    \braket{\mu, \alphav_i} + \braket{\lambda, w \alphav_i} \ge 0$$
for all simple coroots $\alphav_i$.  We break this calculation into
two cases.

First, suppose that $s_i \mu = \mu$.  Then $\braket{\mu, \alphav_i} = 0$.
On the other hand $s_i w > w$ (in the usual Bruhat order) by the maximality
of $a$ in the $W(\mu)$-orbit.  This implies that $w \alphav_i$ is a
positive coroot, which implies that $\braket{\lambda, w \alphav_i }$
is non-negative (since $\lambda$ is dominant). Hence
$$\braket{\mu, \alphav_i} + \braket{\lambda, w \alphav_i} \ge 0.$$

Next, suppose that $s_i \mu \ne \mu$.  Then since $\mu$ is dominant,
$\braket{\mu, \alphav_i} \ge 1$.  On the other hand, $|\braket{ \lambda,
w \alphav_i}| \le 1$ since $w \alphav_i$ is a coroot and $\lambda$
is minuscule.  Hence
$$\braket{\mu, \alphav_i} + \braket{\lambda, w \alphav_i} \ge 0$$
in this case as well.

Since $\mu + a\lambda$ is always dominant, we conclude that
$$d(t^{-\mu}, gt^{a \lambda}) = \mu + a\lambda.$$
Hence, $Q(T_e(\mu,\lambda,\nu))$ is non-empty iff there exists $w \in
W$ such that $\mu + w\lambda = \nu$. (The above argument shows that
$[w]$ will necessarily be a maximal length representative for the
$W(\mu)$ action on $A$.)  If such $w$ exists, then the configuration
space $Q(T_e(\mu,\lambda,\nu))$ is simply the $M(\mu)$-orbit through
$t^{w\lambda}$.  Hence it is smooth and its dimension is given by
the length of $[w]$ in $A$ because it is of the same dimension as the
$B$-orbit through $t^{w\lambda}$.  Since $\lambda$ is minuscule, this
equals $\braket{w \lambda + \lambda, \rhov}$ as desired.
\end{proof}

\begin{proof}[Proof of \thm{th:haines}] It is easy to show by induction
that the fan configuration space
$$Q(A(\vlambda,\vmu)) = Q(P_e(\mu_0,\lambda,\mu_1)) \twistedprod
    \cdots \twistedprod Q(P_e(\mu_{n-1},\lambda_n,\mu_n))$$
is an iterated twisted product of triangle configuration spaces.
Since each factor has a minuscule edge, \lem{l:smooth} tells us that
$Q(A(\vlambda,\vmu))$ is also a smooth variety.  Moreover, the dimensions
add to tell us that
$$\dim_\C Q(A(\vlambda,\vmu)) = \braket{\lambda_1 + \dots + \lambda_n,\rhov}
    = \dim_\C F(\vlambda).$$
On the other hand, $F(\vlambda) = Q(P(\vlambda))$ is partitioned as
a set by the subvarieties $Q(A(\vlambda,\vmu))$, simply by taking the
distances between the vertices of $P(\vlambda)$ and the origin.  If $X$
is any algebraic variety with an equidimensional partition into smooth
varieties $X_1,\ldots,X_N$, then $X$ has pure dimension and its components
are the closures of the parts $X_k$.  In our case, $X = F(\vlambda)$.
\end{proof}

It will be convenient later to abbreviate the dimension
of $F(\vlambda)$ as:
$$d(\vlambda) \defeq \braket{\lambda_1 + \dots + \lambda_n,\rhov}
    = \dim_\C F(\vlambda).$$
The same integers also arise in a different dimension formula:
$$\dim_\C \Gr(\vlambda) = 2d(\vlambda).$$
(Indeed, $\Gr(\vlambda)$ is a top-dimensional component of $F(\vlambda
\concat \vlambda^*)$, given by collapsing the polygon $P(\vlambda \concat
\vlambda^*)$ onto the polyline $L(\vlambda)$.)

Another important corollary of \lem{l:smooth} is the following:

\begin{theorem} Suppose that $D$ is a diskoid with boundary $\vlambda$
with no internal vertices, and suppose that all edges of $D$
(including the terms of $\vlambda$) are minuscule.  Then $Q(D)$
is smooth and projective, and therefore a single component
of $F(\vlambda)$.
\label{th:smooth} \end{theorem}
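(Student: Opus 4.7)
The plan is to build $D$ up from its base vertex one cell at a time, showing inductively that the configuration space is a locally trivial fibration with smooth projective fibre over the configuration space at the previous stage. Since $D$ is contractible and has no internal vertices, its current frontier always admits a \emph{peelable} cell: either a leaf edge (a 1-cell with a pendant vertex) in tree-like parts, or an ear triangle (a 2-cell sharing exactly one edge with the rest of $D$) in disk-like parts. Let $D'$ denote the subdiskoid obtained by removing this peelable cell and its unique new vertex.

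The key inductive step identifies $Q(D) \to Q(D')$ with a fibration whose fibre is the configuration space of the new cell based at its attachment to $D'$. If the peeled cell is a leaf edge of length $\lambda$, then by \eqref{eq:basededge} the fibre is the entire sphere $\Gr(\lambda) \cong \Gv/M(\lambda)$, a smooth projective partial flag variety of dimension $2\braket{\lambda,\rhov}$. If it is an ear triangle with base $\mu$ and other edges $\lambda,\nu$, then the fibre is the edge-based triangle configuration space $Q(T_e(\mu,\lambda,\nu))$, which by \lem{l:smooth} is a single parabolic orbit of dimension $\braket{\lambda+\nu-\mu,\rhov}$, hence smooth, connected, and projective. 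In both cases local triviality follows because the fibre is an orbit of a parabolic subgroup of $\Gv$ acting on $\Gr$, so smoothness, projectivity, and connectedness propagate from $Q(D')$ to $Q(D)$; by induction $Q(D)$ is smooth, projective, and irreducible.

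A telescoping dimension count then shows $\dim Q(D) = d(\vlambda)$. Each boundary edge contributes $\braket{\lambda_i,\rhov}$ exactly once, each internal edge of $D$ is created as a non-base edge of one peeled triangle and later serves as the base of another (contributing $+\braket{e,\rhov}$ and $-\braket{e,\rhov}$), and each edge of a tree-like portion contributes $2\braket{\lambda,\rhov}$, in agreement with the fact that the boundary polygon traverses such edges twice. The total equals $\braket{\sum_i \lambda_i,\rhov} = d(\vlambda) = \dim F(\vlambda)$. Since $Q(D)$ is a closed irreducible subvariety of $F(\vlambda)$ of full dimension and $F(\vlambda)$ is equidimensional by \thm{th:haines}, $Q(D)$ must be one top-dimensional component of the Satake fibre.

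The principal obstacle is verifying Zariski local triviality at each inductive step: one must show that an appropriate parabolic subgroup of $\Gv(\cK)$ fixing a $D'$-configuration acts transitively on fibres with a local algebraic section, which is routine for parahoric/parabolic orbits but demands careful bookkeeping with base points and equivariant structure. A secondary combinatorial point is the existence of a peelable cell at every stage: for triangulated disks with only boundary vertices this is the classical two-ears theorem, and for trees or collared diskoids it follows from the existence of a leaf.
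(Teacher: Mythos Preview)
Your proposal is correct and follows essentially the same strategy as the paper: both build $Q(D)$ as an iterated twisted product (fibration) of edge- or point-based triangle configuration spaces, invoking \lem{l:smooth} for smoothness of each factor and the minuscule hypothesis for projectivity. The paper is terser---it cites the twisted-product structure rather than spelling out local triviality and omits your explicit telescoping dimension count---but it does single out one point you pass over, namely that the restriction map $Q(D)\to F(\vlambda)$ is injective (argued by peeling ears and using that each triangle restricts injectively to its two free edges); since $D$ has no internal vertices this is immediate set-theoretically, so your omission is harmless.
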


\begin{proof} Let $T_e(\mu,\lambda,\nu)$ be a triangle of $D$ with
three minuscule edges, and let the base edge $e$ be any of the edges.
Then by \lem{l:smooth}, $Q(T_e(\mu,\lambda,\nu))$ is smooth.  Likewise
$T_p(\mu,\lambda,\nu)$, based at a point $p$ instead, is smooth.
By construction, $Q(D)$ is a twisted product of configuration spaces of
this form, so it is also smooth.  It is also projective since $D$ is a
minuscule linkage.

There is one delicate point in the inference that $Q(D)$ is a component of
$F(\vlambda)$:  Is the restriction map $Q(D) \to F(\vlambda)$ injective?
As in the proof of \lem{l:smooth}, the restriction map
$$\pi:Q(T_e(\mu,\lambda,\nu)) \to \Gr(\lambda)$$
is injective, and so is the restriction map
$$\pi:Q(T(\mu,\lambda,\nu)) \to \Gr(\mu,\lambda).$$
The diskoid $D$ must have a triangle with at least two edges on the boundary,
so by induction its restriction map to $F(\vlambda)$ is also injective.
\end{proof}

\subsection{A homological state model}
\label{s:hstate}

This subsection discusses our motivation for the technical constructions
in the remainder of \sec{s:minuscule}.

We would like to use \thm{th:satake} as a state model or counting model
to evaluate webs in $\rep^u(G)$.  If $w$ is a web with dual diskoid $D$,
then there is a map of linkages
$$P(\vlambda) = \partial D \longto \Gamma(D)$$
given by the inclusion of the boundary.  This gives rise to a restriction map
$$\pi = \pi^{\Gamma(D)}_{P(\vlambda)}: Q(D) \to F(\vlambda).$$
A point in $Q(D)$ is a ``state" of $D$ in the sense of mathematical
physics, in which each vertex of $D$ (or each face of $w$) is assigned
an element of $\Gr$.  We would like to count the number of states of $D$
with some fixed boundary, or in other words the cardinality of a diskoid
fibre $\pi^{-1}(f)$ for $f \in F(\vlambda)$.  If $f$ is chosen generically
in a top-dimensional component of $F(\vlambda)$, then optimistically this
cardinality will be the coefficient of $\Psi(w)$ in the Satake basis.

However, this sketch is naive.  The diskoid fibre $\pi^{-1}(f)$ often
has a complicated geometry for which it is hard to define ``counting".
The first and main solution for us is to replace counting by a homological
intersection.  (In \sec{s:euler} we will propose a second solution, in which
we count by taking the Euler characteristic of the fibre.)  In particular,
for each web $w$, we will define a homology class $c(w) \in H_\top(Q(D))$
such that $\pi_*(c(w))$ equals $\Psi(w)$.

\subsection{The homology convolution category}
\label{s:conv}

If $M$ is an algebraic variety over $\C$, we will consider its
intersection cohomology sheaf $IC_M$ as a simple object in the category
of perverse sheaves on $M$.   If $M$ is smooth, then $IC_M$ is isomorphic
to $\C_M[\dim_\C M]$, the constant sheaf shifted by the complex dimension
of $M$. For brevity, we will write this perverse sheaf as $\C[M]$.

The geometric Satake correspondence is a tensor functor that takes the usual
product on $\rep^u(G)$ to the convolution tensor product on $\perv(\Gr)$.
In particular, the tensor product $V(\vlambda)$ of irreducible minuscule
representations corresponds to the convolution tensor product of the simple
perverse sheaves $\C[\Gr(\lambda_i)]$ on minuscule spheres, which are
closed in the algebraic geometry.  By definition, this convolution tensor
product is given by the pushforward $(m_{\vlambda})_*(\C[\Gr(\vlambda)])$
along the convolution morphism.

Let $\perv(\Gr)_\min$ denote the subpivotal category of $\perv(\Gr)$
consisting of such pushforwards.  By construction, $\perv(\Gr)_\min$
is equivalent to $\rep^u(G)_\min$.  Our goal is to study $\perv(\Gr)_\min$
using convolutions in homology, following ideas of Ginzburg.  We begin by
reviewing some generalities, following \cite[Sec. 2.7]{CG:complex}.

Let $\{M_i\}$ be a set of connected, smooth complex varieties and let $M_0$
be a possibly singular, stratified variety with strata $\{U_\alpha\}$.
For each $i$, let $\pi_i:M_i \to M_0$ be a proper semismall map.
In this context, the statement that $\pi_i$ is semismall means that
$\pi_i$ restricts to a fibre bundle over each stratum $U_\alpha$ and
that the dimensions of these fibres is given by
$$\dim_\C \pi_i^{-1}(u) = \frac{\dim_\C M_i - \dim_\C U_\alpha}2$$
for $u \in U_\alpha$ (note that we have equality above).  Let $d_i =
\dim_\C M_i$.

With this setup, let $Z_{ij} = M_i \times_{M_0} M_j$.  The semismallness
condition implies that $\dim_\C Z_{ij} = \frac{d_i + d_j}{2}$.  Let
$$H_\top(Z_{ij}) = H_{d_i + d_j}(Z_{ij})$$
be the top homology of $Z_{ij}$.  If the $M_i$ are proper, which they
will be in our situation, then we will obtain a valid definition of the
convolution product using the ordinary singular homology of $Z_{ij}$.
(Otherwise the correct type of homology would be Borel-Moore homology.)

Define a \emph{homological convolution product}
$$*:H_\top(Z_{ij}) \tensor H_\top(Z_{jk}) \to H_\top(Z_{ik})$$
by the formula
$$c_1 * c_2 = (\pi_{ik})_* (\pi_{ij}^*(c_1) \cap \pi_{jk}^*(c_2)),$$
where ``$\cap$" denotes the intersection product (with support), relative
to the ambient smooth manifold $M_i \times M_j \times M_k$.  This may
be defined using the cup product in cohomology via Poincar\'e duality.
For more details about this construction, see \cite[Sec. 2.6.15]{CG:complex}
or \cite[Sec. 19.2]{Fulton:intersect}.  Note that because
$$\dim_\C Z_{ij} = \frac{d_i + d_j}{2},$$
the correct homological degree is preserved by the convolution product.

This construction is relevant for us because of a theorem of Ginzburg
that relates $H_\top(Z_{ij})$ to morphisms in the category $\perv(M_0)$
of perverse sheaves on $M_0$.

\begin{theorem} \cite[Thm. 8.6.7]{CG:complex}
With the above setup, there is an isomorphism
$$H_\top(Z_{ij}) \cong
    \Hom_{\perv(M_0)} \bigl( (\pi_i)_*\C[M_i], (\pi_j)_*\C[M_j] \bigr).$$
This isomorphism identifies convolution products on the left side with
compositions of morphisms on the right side.
\label{th:ginzburg}
\end{theorem}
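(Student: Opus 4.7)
The plan is to prove \thm{th:ginzburg} using the six-functor yoga on the derived category of sheaves on $M_0$: establish the vector-space isomorphism via a short chain of adjunctions and proper base change, then check that composition of morphisms in $\perv(M_0)$ translates to the intersection-theoretic convolution $*$ by a diagram chase on a triple fibre product. Semismallness enters in two places: it puts $(\pi_i)_*\C[M_i]$ in the perverse heart (so the derived $\Hom$ we compute is the perverse $\Hom$), and it forces the computation to concentrate in top degree.

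For the isomorphism, consider the Cartesian square with corners $Z_{ij}$, $M_i$, $M_j$, $M_0$ and projections $q_i:Z_{ij}\to M_i$, $q_j:Z_{ij}\to M_j$, both proper as base changes of proper maps. First apply the adjunction $\pi_j^*\dashv(\pi_j)_*$ to rewrite
$$\Hom\bigl((\pi_i)_*\C[M_i],(\pi_j)_*\C[M_j]\bigr)
= \Hom\bigl(\pi_j^*(\pi_i)_*\C_{M_i}[d_i],\,\C_{M_j}[d_j]\bigr).$$
Proper base change identifies $\pi_j^*(\pi_i)_*\C_{M_i}$ with $(q_j)_*\C_{Z_{ij}}$. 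Now use $(q_j)_!=(q_j)_*$ together with the adjunction to $q_j^!$ to push everything onto $Z_{ij}$. Since $M_j$ is smooth of complex dimension $d_j$, $\omega_{M_j}\cong \C_{M_j}[2d_j]$, hence $q_j^!\C_{M_j}[d_j]\cong \omega_{Z_{ij}}[-d_j]$. The resulting Hom becomes $\Hom(\C_{Z_{ij}},\omega_{Z_{ij}}[-d_i-d_j])=H^{BM}_{d_i+d_j}(Z_{ij})$, and properness of $Z_{ij}$ (a closed subvariety of the proper $M_i\times M_j$) identifies Borel--Moore homology with singular homology, yielding $H_\top(Z_{ij})$.

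For compatibility with products, let $Z_{ijk}=M_i\times_{M_0}M_j\times_{M_0}M_k$, with three partial projections $p_{ij}:Z_{ijk}\to Z_{ij}$, $p_{jk}:Z_{ijk}\to Z_{jk}$, $p_{ik}:Z_{ijk}\to Z_{ik}$. Given classes $c_1\in H_\top(Z_{ij})$, $c_2\in H_\top(Z_{jk})$ and the corresponding morphisms $\phi_1,\phi_2$, applying the same chain of adjunctions to the composition $\phi_2\circ\phi_1$ reduces it to a single expression on $Z_{ijk}$: one proper base change along each of two Cartesian squares produces $(p_{ik})_*$ of a pullback built from $c_1$ and $c_2$. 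The role of the cup product (equivalently intersection with support inside the smooth ambient $M_i\times M_j\times M_k$) is forced by the Verdier duality step that matches pullback of a homology class on $Z_{ij}$ to a cohomology class on $Z_{ijk}$ supported on $p_{ij}^{-1}(Z_{ij})$; taking the cap product of two such classes reproduces the formula $c_1*c_2=(p_{ik})_*(p_{ij}^*c_1\cap p_{jk}^*c_2)$ when interpreted back on $Z_{ik}$ via $(p_{ik})_*$.

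The main obstacle is this second step: the sheaf-theoretic composition and the intersection-theoretic convolution are each natural on their own side, but matching them requires tracking base change isomorphisms through all three Cartesian squares of the fibre cube over $M_0$ and reconciling two presentations of the cup product, one via $q_j^!$ and Verdier duality and the other via the restriction to the diagonal inside $M_i\times M_j\times M_k$. The cleanest bookkeeping is the kernel-transform point of view, in which one regards each $M_i\to M_0$ as a kernel defining a functor $D^b(\mathrm{pt})\to D^b(M_0)$ and identifies composition of such functors with convolution of kernels supported on $Z_{ijk}$; with this setup, both sides of the desired identity become instances of the same projection formula, reducing the compatibility to a formal check that the isomorphism from the previous paragraph is natural under this kernel convolution.
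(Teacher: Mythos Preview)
The paper does not give its own proof of this statement: it is quoted verbatim as \cite[Thm.~8.6.7]{CG:complex} and used as a black box in the construction of $\hconv(\Gr)$. So there is no ``paper's own proof'' to compare against.

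That said, your sketch is the standard argument, essentially the one in Chriss--Ginzburg. The chain of adjunctions and proper base change for the vector-space isomorphism is correct and cleanly stated. A couple of remarks on precision. First, semismallness is doing slightly more than you say: it is what guarantees that $(\pi_i)_*\C[M_i]$ is perverse, so that the derived $\Hom$ you compute in degree zero really is $\Hom_{\perv(M_0)}$ rather than a piece of a spectral sequence; you acknowledge this but it deserves emphasis, since without semismallness the identification with a single homology group fails. Second, in the compatibility step your phrase ``reducing the compatibility to a formal check'' is where the actual work in \cite{CG:complex} lives; the kernel-transform viewpoint you invoke is the right organizing principle, but matching the sheaf-theoretic composition to the refined intersection product $\cap$ (with supports) still requires unwinding the Verdier-duality identification of $H_\top(Z_{ij})$ with cohomology classes supported on $Z_{ij}$ inside $M_i\times M_j$, and then applying the projection formula carefully. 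Your outline is correct but this paragraph is a plan rather than a proof; the honest thing is to cite \cite[\S8.6]{CG:complex} for the details, which is exactly what the paper does.
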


We will apply this setup by letting $M_0 = \Gr$ and by letting each $M_i$
be $\Gr(\vlambda)$ for a sequence $\vlambda$ of dominant minuscule weights.
The convolution morphism $m_\vlambda: \Gr(\vlambda) \to \Gr$ is semismall.
(See \cite[Lem. 4.4]{MV:geometric}; it also follows from the proof of
\thm{th:haines}.)  Then $Z_{ij}$ becomes
$$Z(\vlambda, \vmu) = \Gr(\vlambda) \times_\Gr \Gr(\vmu)
    = Q(P(\vlambda^* \concat \vmu)),$$
where $P(\vlambda^* \concat \vmu)$ is this polygon:
$$P(\vlambda^* \concat \vmu) \;\;=\;\; \begin{tikzpicture}[baseline]
\draw[darkred,semithick] (0,0) -- (1,1) -- (2,.5) -- (3,1) -- (3.5,0)
    -- (2,-1) -- (1,-1) -- cycle;
\draw[darkred,semithick,midto] (1,1) -- (2,.5);
\draw[darkred,semithick,midto] (1,-1) -- (2,-1);
\fill[darkred] (0,0) circle (.07);
\path[draw=darkred,fill=white] (3.5,0) circle (.07);
\draw[anchor=north] (1.75,-1.1) node {$\vlambda$};
\draw[anchor=south] (1.75,.9) node {$\vmu$};
\end{tikzpicture}$$

\thm{th:ginzburg} motivates the following construction of a category
$\hconv(\Gr)$.  The objects in $\hconv(\Gr)$ are the polyline varieties
$\Gr(\vlambda)$, where $\vlambda$ is a sequence minuscule weights.
The tensor product on objects is, by definition, given by convolution on
objects, so
$$\Gr(\vlambda) \tensor \Gr(\vmu) \defeq \Gr(\vlambda \concat \vmu),$$
where $\concat$ denotes concatenation of sequences.  So the identity object
is the point $\Gr(\emptyset)$.  Finally the dual object $\Gr(\vlambda)^*
= \Gr(\vlambda^*)$ of $\Gr(\vlambda)$ is given by reversing $\vlambda$
and taking the dual of each of its terms.

We define the morphism spaces of $\hconv(\Gr)$ as
$$\Hom_{\hconv(\Gr)}(\Gr(\vlambda), \Gr(\vmu))
    \defeq H_\top(Z(\vlambda, \vmu)).$$
The composition of morphisms is given by the convolution product.  Note that
the identity morphism $1_{\vlambda} \in H_\top(Z(\vlambda, \vlambda))$
is given by the class $[\Gr(\vlambda)_\Delta]$ of the diagonal
$$\Gr(\vlambda)_\Delta \subseteq Z(\vlambda,\vlambda) \subseteq
    \Gr(\vlambda) \times \Gr(\vlambda),$$
\ie, it is the configurations in which the polygon $P(\vlambda^* \concat
\vlambda)$ has collapsed onto the polyline $L(\vlambda)$.

To describe the tensor structure on morphisms, it is enough to describe
how to tensor with the identity morphism.  So let $\vlambda, \vmu, \vnu$ be
three sequences of dominant minuscule weights and let $c \in H_\top(Z(\vmu,
\vnu))$.  Our goal is to construct a class
$$1_{\vlambda} \tensor c \in
    H_\top(Z(\vlambda \concat \vmu, \vlambda \concat \vnu))$$

For the moment, let $\Gamma$ be a $\rho$-shaped graph with a tail of type
$\vlambda$ and a loop of type $\vmu^* \concat \vnu$, based at the end of
the tail:
$$\Gamma \;\;=\;\; \begin{tikzpicture}[baseline]
\draw[darkred,semithick] (0,0) -- (1,1) -- (2,.5) -- (3,1) -- (3.5,0)
    -- (2,-1) -- (1,-1) -- cycle;
\draw[darkred,semithick] (0,0) -- (-1,.5) -- (-2,0);
\draw[darkred,semithick,midto] (1,1) -- (2,.5);
\draw[darkred,semithick,midto] (1,-1) -- (2,-1);
\draw[darkred,semithick,midto] (-2,0) -- (-1,.5);
\fill[darkred] (-2,0) circle (.07);
\path[draw=darkred,fill=white] (0,0) circle (.07);
\path[draw=darkred,fill=white] (3.5,0) circle (.07);
\draw[anchor=north] (1.75,-1.1) node {$\vmu$};
\draw[anchor=south] (1.75,.9) node {$\vnu$};
\draw[anchor=north] (-1,.1) node {$\vlambda$};
\end{tikzpicture}$$
Let $X = Q(\Gamma)$ be its based configuration space.  We describe two
fibration constructions related to $X$.  First, there is a restriction map
$$\pi_{L(\vlambda) \sqcup \pt}^{L(\vlambda \concat \vmu)}:
    \Gr(\vlambda \concat \vmu) \to \Gr(\vlambda) \times \Gr$$
given by restricting to the polyline $L(\vlambda)$ and the free endpoint
of $L(\vlambda \concat \vmu)$.  Then $X$ is the fibred product
$$X = \Gr(\vlambda \concat \vmu)
    \times_{\Gr(\vlambda) \times \Gr} \Gr(\vlambda \concat \vnu).$$
Second, there is a projection
$$\pi_{L(\vlambda)}^\Gamma:X \to \Gr(\vlambda)$$
given by restricting from $\Gamma$ to $L(\vlambda)$.  The fibres
of this projection are $Z(\vmu,\vnu)$.

Since $\Gr(\vlambda)$ is simply connected, we get an isomorphism
$$H_\top(X) \cong H_\top(\Gr(\vlambda)) \tensor H_\top(Z(\vmu, \vnu))$$
and thus we obtain an isomorphism
$$H_\top(Z(\vmu, \vnu)) \stackrel{\cong}{\longto} H_\top(X)$$
given by $c \mapsto [\Gr(\vlambda)] \tensor c$.

There is also an inclusion
$$i = \pi_{P(\vlambda \concat \vmu \concat \vnu^* \concat \vlambda^*)}^\Gamma:
    X \to Z(\vlambda \concat \vmu, \vlambda \concat \vnu),$$
using the polygon which travels twice along the tail of $\Gamma$
and around the loop of $\Gamma$.  Combining all this structure, we define
$$1_{\vlambda} \tensor c \defeq i_*( [\Gr(\vlambda)] \tensor c).$$

Tensoring by the identity morphism on the other side is similar and we
leave the construction to the reader.

Finally, to define the cap and cup morphisms for any $\vlambda$, we will
define them for a single minuscule weight $\lambda$.  Note that
$$Z(\lambda \concat \lambda^*, \emptyset)
    = Z(\emptyset, \lambda \concat \lambda^*)
    = F(\lambda, \lambda^*) \cong \Gr(\lambda).$$
We define the cup $b_\lambda$ and the cap $d_\lambda$ to each be the class
$[\Gr(\lambda)]$ in their respective hom spaces.

\begin{theorem} There is an equivalence of pivotal categories
$$\hconv(\Gr) \cong \perv(\Gr)_\min \cong \rep^u(G)_\min.$$
\label{th:convperv} \end{theorem}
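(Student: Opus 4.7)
The plan is to establish the two equivalences separately. The right-hand equivalence $\perv(\Gr)_\min \cong \rep^u(G)_\min$ follows by restricting the full geometric Satake correspondence of \thm{th:satake} to the pivotal subcategory generated by the simple perverse sheaves $\C[\Gr(\lambda)]$ for minuscule $\lambda$. Under Satake these correspond precisely to the minuscule irreducible representations of $G$, and the convolution of such sheaves corresponds to the tensor product in $\rep^u(G)_\min$, so the restriction is a pivotal tensor equivalence onto $\rep^u(G)_\min$.

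For the left-hand equivalence $\hconv(\Gr) \cong \perv(\Gr)_\min$, I would construct a functor $F$ sending $\Gr(\vlambda)$ to $(m_\vlambda)_*\C[\Gr(\vlambda)]$ on objects (which automatically matches tensor products, since convolution on $\perv(\Gr)$ is defined by pushforward along the concatenation map $\Gr(\vlambda) \twistedprod \Gr(\vmu) \to \Gr$) and, on Hom spaces, using Ginzburg's isomorphism
$$H_\top(Z(\vlambda,\vmu)) \cong \Hom_{\perv(\Gr)}\bigl((m_\vlambda)_*\C[\Gr(\vlambda)], (m_\vmu)_*\C[\Gr(\vmu)]\bigr)$$
from \thm{th:ginzburg}. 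The hypotheses of \thm{th:ginzburg} apply because each $m_\vlambda$ is proper (since $\Gr(\vlambda)$ is projective in the minuscule case) and semismall (by the dimension count supplied in the proof of \thm{th:haines}, or equivalently by Mirkovi\'c--Vilonen). By \thm{th:ginzburg} the isomorphism already converts the homological convolution product into composition of morphisms, so $F$ is functorial and bijective on Hom spaces.

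It remains to verify that $F$ respects the full pivotal tensor structure. The identity $1_\vlambda = [\Gr(\vlambda)_\Delta]$ is sent to the identity morphism by a standard computation (it is the class of the graph of $m_\vlambda$). For the cup and cap, note that $F(\lambda,\lambda^*) \cong \Gr(\lambda)$ is smooth and connected of the correct top dimension, so $H_\top(F(\lambda,\lambda^*))$ is one-dimensional; the corresponding invariant space $\Inv(V(\lambda^*)\tensor V(\lambda))$ is also one-dimensional under Satake and spanned by the standard evaluation pairing, so $F$ matches $b_\lambda$ and $d_\lambda$ up to a scalar, with the normalization pinned down by the unit axiom from \eqref{e:plumbing}. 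For the tensor product of a morphism with the identity, one must show that the class $1_\vlambda \tensor c$, obtained by pushing $[\Gr(\vlambda)] \tensor c \in H_\top(X)$ forward along the inclusion $i \colon X \to Z(\vlambda\concat\vmu,\vlambda\concat\vnu)$, corresponds under $F$ to the convolution of $\mathrm{id}_{(m_\vlambda)_*\C[\Gr(\vlambda)]}$ with the morphism given by $c$.

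The main obstacle is this last compatibility. It reduces to a proper base-change argument combined with the identification of convolution of perverse sheaves with pushforward along concatenation: one unpacks the fibre square presenting $X$ as $\Gr(\vlambda\concat\vmu) \times_{\Gr(\vlambda)\times\Gr} \Gr(\vlambda\concat\vnu)$, uses that $\Gr(\vlambda)$ is smooth and simply connected to compute $H_\top(X) \cong H_\top(\Gr(\vlambda)) \tensor H_\top(Z(\vmu,\vnu))$ as a K\"unneth factorization, and tracks the resulting cycle class through the inclusion $i$ via the intersection-theoretic formula for $*$. Once this tensor-product compatibility is established, the pivotal axioms for $\hconv(\Gr)$ transport from those in $\perv(\Gr)_\min$ through the bijective functor $F$, yielding the equivalence.
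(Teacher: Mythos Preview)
Your outline matches the paper's proof closely for the monoidal part: the second equivalence is geometric Satake, the first is built from \thm{th:ginzburg}, and the tensor-with-identity compatibility is handled by the same base-change/K\"unneth argument the paper sketches.

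The gap is in your treatment of the pivotal structure. You assert that since $H_\top(F(\lambda,\lambda^*))$ is one-dimensional, $F(b_\lambda)$ and $F(d_\lambda)$ match the cup and cap in $\perv(\Gr)_\min$ up to scalars, ``with the normalization pinned down by the unit axiom.'' But the unit axiom \eqref{e:plumbing} in $\hconv(\Gr)$ for the specific classes $b_\lambda = d_\lambda = [\Gr(\lambda)]$ is exactly what has not yet been verified; invoking it here is circular. The zigzag identity is a nontrivial convolution computation in $H_\top$, and without it you cannot conclude that the product of the two unknown scalars is $1$, hence cannot conclude that the defined pivotal structure on $\hconv(\Gr)$ agrees with the one transported from $\perv(\Gr)_\min$.

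The paper closes this gap by a direct calculation of the categorical dimension $d_\lambda \circ b_\lambda$: it identifies a neighbourhood of $F(\lambda,\lambda^*)$ inside $\Gr(\lambda,\lambda^*)$ with $T^*\Gr(\lambda)$, so that the convolution product becomes the self-intersection of the zero section, which equals $(-1)^{\dim_\C \Gr(\lambda)}\chi(\Gr(\lambda)) = (-1)^{\braket{2\lambda,\rhov}}\dim V(\lambda)$. This number is precisely the loop value in $\rep^u(G)$, and since (as recalled from \sec{s:signs}) the pivotal structure is determined by these dimensions, the equivalence is pivotal. This computation is the substantive content explaining why $\rep^u(G)_\min$ rather than $\rep(G)_\min$ appears; your argument, as written, does not distinguish the two.
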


Applying \thm{th:convperv} to invariant spaces, we obtain an isomorphism
$$\Inv(V(\vlambda)) \cong \Hom_{\hconv(\Gr)}(\Gr(\emptyset),\Gr(\vlambda))
    = H_\top(Z(\emptyset, \vlambda)) = H_\top(F(\vlambda)),$$
which is \thm{th:satbasis}.

\begin{proof} The second equivalence is geometric Satake, so we will just
prove the first equivalence.  We begin by showing that it is an equivalence
of monoidal categories.

By the definition, the objects in both categories are parameterized
by sequences $\vlambda$, so the functor on objects is very simple.  On
morphisms, the functor is given by the isomorphisms from \thm{th:ginzburg}.
By this theorem, the functor is fully faithful and is compatible with
composition on both sides. (\Ie, it is a functor.)  To complete the proof
this theorem, we need only to show that the functor is compatible with
the tensor product and with pivotal duality.

To see that it is compatible with the tensor product, we use the same
notation as above.  If
$$c \in \Hom((m_\vmu)_* \C[\Gr(\vmu)], (m_\vnu)_*\C[\Gr(\vnu)]),$$
then with respect to the tensor structure in $\perv(\Gr)$,
$I_{(m_\vlambda)_*\C[\Gr(\vlambda)]} \tensor c$ is given by the image of
$c$ under the map
\begin{multline*}
\Hom_{\perv(\Gr)}\left( (m_\vmu)_* \C[\Gr(\vmu)],
    (m_\vnu)_* \C[\Gr(\vnu)] \right) \stackrel{\cong}{\longto} \\
\Hom_{\perv(\Gr(\vlambda) \times \Gr)} \left( (\pi_{L(\vlambda) \sqcup \pt}
    ^{L(\vlambda \concat \vmu)})_* \C[\Gr(\vlambda \concat \vmu)],
    (\pi_{L(\vlambda) \sqcup \pt}^{L(\vlambda \concat \vnu)})_*\C[\Gr(\vlambda
    \concat \vnu)] \right) \stackrel{p_*}{\longto} \\
\Hom_{\perv(\Gr)} \left( (\pi_{\vlambda \concat \vmu})_*
    \C[\Gr(\vlambda \concat \vmu)], (m_{\vlambda \concat \vnu})_*
    \C[\Gr(\vlambda \concat \vnu)] \right).
\end{multline*}
Here $p: \Gr(\vlambda) \times \Gr \to \Gr$ is the projection onto the
second factor.  This is easily seen to match our above definition.

It remains to check that the pivotal structures match under this equivalence.
Recall from \sec{s:signs} that the pivotal structures on $\rep(G)$ are
determined by the dimensions $\dim(V(\lambda))$, which are by definition
the values of closed loops.  (The discussion there is for pivotal structures
that differ by a sign, but it is true in general.)  Moreover, the discrepancy
is multiplicative, so it only needs to be checked for minuscule $\lambda$.

Let $\lambda$ be minuscule.  In $\hconv(\Gr)$, the value of a loop labelled
$\lambda$, \ie, the composition
$$d_\lambda \circ b_\lambda \in \Hom(\Gr(\emptyset), \Gr(\emptyset)) = \C,$$
is given by the self-intersection of $\Gr(\lambda) \cong F(\lambda,
\lambda^*)$ with itself inside $\Gr(\lambda, \lambda^*)$.

There is a neighbourhood (defined using the pullback of the open big cell)
of $F(\lambda, \lambda^*)$ in $\Gr(\lambda, \lambda^*)$ which is isomorphic
to $T^*\Gr(\lambda)$, under an isomorphism which carries $F(\lambda,
\lambda^*)$ to the zero section $\Gr(\lambda)$.

For any compact, complex $d$-manifold $X$, the self-intersection of $X$
with itself inside $T^*X$ is $(-1)^d \chi(X)$, where $\chi(X)$ is the Euler
characteristic of $X$.  (The self-intersection in $TX$ is $\chi(X)$, and
for a complex $d$-manifold the cotangent bundle $T^*X$ has the opposite real
orientation exactly when $d$ is odd.)  Applying this to $X = \Gr(\lambda)$,
we conclude that
$$d_\lambda \circ b_\lambda = (-1)^d \chi(\Gr(\lambda))
    = (-1)^{\braket{2\lambda, \rhov}} \dim V(\lambda).$$
This is the sign correction that is used to define the pivotal structure
on $\rep^u(G)$, as desired.
\end{proof}

\subsection{From the free spider to the convolution category}

\sec{s:spiders} describes a pivotal functor
$$\Psi:\fsp(G) \to \rep^u(G)_\min.$$
On the other hand, the geometric Satake correspondence and \thm{th:convperv}
yield equivalences
$$\rep^u(G)_\min \cong \perv(\Gr)_\min \cong \hconv(\Gr).$$
The composition is a functor $\fsp(G) \to \hconv(\Gr)$ which we will
also denote by $\Psi$.  Our goal now is to describe this functor and in
particular its action on invariant vectors.

Let $\lambda, \mu, \nu$ be a triple of dominant minuscule weights such that
$$\Inv_G(V(\lambda, \mu, \nu)) \ne 0.$$
There is a simple web $w \in
\Inv_{\fsp(G)}(\lambda, \mu, \nu)$ which contains a single vertex. On the
other hand,
$$\Inv_{\hconv(\Gr)}(\lambda, \mu, \nu) \cong H_\top(F(\lambda, \mu, \nu))$$
is one-dimensional with canonical generator $[F(\lambda, \mu, \nu)]$.
Recall from \sec{s:spiders} that in the construction of the functor
$\fsp(G) \to \rep^u(G)_\min$, there was some freedom to choose the image
of the simple web $w$ (it was only defined up to a non-zero scalar).  Now,
we fix this choice by setting
$$\Psi(w) \defeq [F(\lambda, \mu, \nu)].$$
The functor $\Psi$ is now determined by what it does on vertices and the
fact that it preserves the pivotal structure on both sides.

We are now in a position to prove \thm{th:homclass}, which we will restate
as follows.  Recall that
$$d(\vlambda) = \dim_\C F(\vlambda).$$

\begin{theorem} Let $w$ be a web with boundary $\vlambda$ and dual diskoid
$D = D(w)$.  Let
$$\pi:Q(D) \to F(\vlambda)$$
be the boundary restriction map.  There exists a homology class $c(w)
\in H_{2d(\vlambda)}(Q(D))$ such that $\pi_*(c(w)) = \Psi(w)$.  Moreover,
when $Q(D)$ has dimension $d(\vlambda)$ and is reduced as a scheme, then
$c(w)$ is the fundamental class $[Q(D)]$.
\label{th:homclass2} \end{theorem}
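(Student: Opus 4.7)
The plan is to use the equivalence $\hconv(\Gr) \cong \rep^u(G)_\min$ of \thm{th:convperv} together with the inductive combinatorial structure of $w$.  Every trivalent vertex $v$ of $w$ is a simple sub-web $w_v$, and by the normalization fixed earlier in the subsection, $\Psi$ sends $w_v$ to the fundamental class $[F(\lambda_v,\mu_v,\nu_v)]$, where $(\lambda_v,\mu_v,\nu_v)$ are the labels of the three edges incident to $v$.  Since the dual triangle $T_v$ in $D$ has configuration space $Q(T_v) = F(\lambda_v,\mu_v,\nu_v)$, the base case of the induction is immediate:  for a single-vertex web we take $c(w_v) = [Q(T_v)]$, and the boundary restriction identifies $\pi_*(c(w_v))$ with $\Psi(w_v)$ tautologically.

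For the inductive step, realize $w$ as a sequence of pivotal operations applied to simple webs, and match each operation in $\fsp(G)$ with a corresponding operation on both the dual diskoid $D$ and the convolution category $\hconv(\Gr)$.  Geometrically, attaching a new triangle $T'$ to a partial diskoid $D'$ along a shared edge or shared vertex $\Gamma$ produces the fibred product
\[ Q(D' \cup T') = Q(D') \times_{Q(\Gamma)} Q(T'). \]
Categorically, this corresponds (after suitable rotations and cup/cap morphisms) to either a composition or a tensoring-with-identity in $\hconv(\Gr)$; in either case it is computed by the convolution product on top homology, which is by definition an intersection in the ambient product followed by a proper push-forward.  Assuming inductively that $c(D') \in H_*(Q(D'))$ satisfies the desired push-forward property for the partial web, one produces the class on $Q(D' \cup T')$ by pulling $c(D')$ and $[Q(T')]$ back to $Q(D') \times Q(T')$ and intersecting them along the diagonal over $Q(\Gamma)$.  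Iterating until the diskoid is complete yields $c(w) \in H_{2d(\vlambda)}(Q(D))$, and the definition of the convolution product forces $\pi_*(c(w)) = \Psi(w)$.

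When $Q(D)$ is reduced of complex dimension exactly $d(\vlambda)$, each intermediate fibred product also has its expected dimension, so every intersection in the inductive procedure is proper and represents the ordinary fundamental class of the fibred product.  The iteration then collapses to $c(w) = [Q(D)]$.  If instead $Q(D)$ has excess dimension or non-reduced structure, the same construction still defines $c(w)$, but only as a refined intersection class rather than as the naive fundamental class.

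The main obstacle is the bookkeeping of the category-to-geometry dictionary.  Specifically, one must verify that the tensoring-with-identity construction via the $\rho$-shaped graph in \sec{s:conv}, the cup and cap morphisms, and the cyclic rotations arising from changes of base point each correspond under $\pi$ to the expected diskoid gluings, and that the unimodal sign conventions of \sec{s:signs} propagate consistently so that the normalization choice made on simple vertices extends to $\pi_*(c(w)) = \Psi(w)$ for arbitrary $w$ without any spurious signs.
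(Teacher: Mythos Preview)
Your strategy is close to the paper's, but the paper organizes the induction differently in a way that sidesteps exactly the bookkeeping you flag as the main obstacle.  Rather than attaching triangles one at a time and tracking rotations, tensorings-with-identity, and cup/cap moves, the paper puts $w$ in Morse position with respect to a height function, so that horizontal slices $\ell_0,\dots,\ell_m$ cut $w$ into elementary pieces $w_k$ (each a single cup, cap, ascending $Y$, or descending $Y$).  This forces the factorization $w = w_m \circ \cdots \circ w_1$ to consist purely of \emph{compositions}, with each $\Psi(w_k) = [X_k]$ the class of an explicitly described component $X_k \subset Z(\vlambda^{(k-1)},\vlambda^{(k)})$.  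The iterated convolution $[X_m] * \cdots * [X_1]$ is then computed as a single refined intersection product in one fixed smooth ambient variety $\Gr(\vlambda^{(0)}) \times \cdots \times \Gr(\vlambda^{(m)})$, and $Q(D)$ is literally the set-theoretic intersection $\bigcap_k \pi_{k-1,k}^{-1}(X_k)$.  The class $c(w)$ is defined once, as this intersection-with-support, and \cite[\S 8.2]{Fulton:intersect} gives $c(w) = [Q(D)]$ directly when $Q(D)$ is reduced of the expected dimension.

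Your inductive version has a gap in the last part: the assertion that ``each intermediate fibred product also has its expected dimension'' whenever the final $Q(D)$ does is not justified, and there is no obvious reason it should hold---extra triangle constraints can cut down an oversize intermediate $Q(D')$ by more than the generic amount.  The paper avoids this entirely by never needing intermediate properness: it takes one refined intersection in one ambient space and invokes Fulton only at the end.  Your approach can be repaired by carrying refined intersection classes throughout and appealing to associativity of the refined product, but then the conclusion $c(w) = [Q(D)]$ no longer follows from ``each step is proper'' and you would need the same global argument the paper uses.  In short, the Morse slicing buys you exactly two things: no rotation/tensor bookkeeping, and a single clean application of \cite[\S 8.2]{Fulton:intersect}.
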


\begin{proof}
We begin by picking a isotopy representative for $w$ such that the height
function is a Morse function and so that the boundary of $w$ is at the
top level.  We assume a sequence of horizontal lines $\ell_0,\ldots,\ell_m$
such that in between each pair, $w$ has only a single cap, cup, or a vertex.
We assume further that each vertex is either an ascending Y (it is in the
shape of a Y) or a descending Y (an upside-down Y).

\begin{fullfigure}{f:morseweb}{A web for $\SL(9)$ in Morse position}
\begin{tikzpicture}[scale=0.5]
\begin{scope}[web]
\draw[midto] (0,10) -- (0,8) node[black,left,midway] {$\omega_2$};
\draw[midto] (0,8) -- (0,6);
\draw[midto] (0,6) -- (1,5); \draw[midfrom] (1,5) -- (1,4);
\draw[midfrom] (1,4) -- (1,2);
\draw[midfrom] (3,10) -- (3,9) node[black,left,midway] {$\omega_3$};
\draw[midto] (3,9) -- (2,8);
\draw[midto] (2,8) -- (2,6) node[black,left,midway] {$\omega_3$};
\draw[midto] (2,6) -- (1,5);
\draw[midto] (3,9) -- (4,8) node[black,right,midway] {$\omega_3$};
\draw[midto] (4,8) -- (5,7); \draw[midto] (5,7) -- (5,6);
\draw[midto] (5,6) -- (5,4) node[black,left,midway] {$\omega_8$};
\draw[midto] (5,4) -- (6,3);
\draw[midto] (6,3) -- (6,2);
\draw[midto] (6,10) -- (6,8) node[black,left,midway] {$\omega_5$};
\draw[midto] (6,8) -- (5,7);
\draw[midfrom] (7,10) -- (7,8) node[black,right,midway] {$\omega_4$};
\draw[midfrom] (7,8) -- (7,6); \draw[midfrom] (7,6) -- (7,4);
\draw[midfrom] (7,4) -- (6,3);
\draw[midfrom] (1,2) .. controls (1,0) and (6,0) .. (6,2)
    node[black,above,midway] {$\omega_4$};
\end{scope}
\foreach \k in {0,1,...,5}
    \draw[dashed] ($(-1,2*\k)$) -- ($(8,2*\k)$) node[right] {$\ell_\k$};
\end{tikzpicture}
\end{fullfigure}

Let $\vlambda^{(k)}$ be the vector of labels of the edges cut by
the horizontal line $\ell_k$.  Then $\vlambda^{(0)} = \emptyset$ and
$\vlambda^{(m)} = \vlambda$. For example, in \fig{f:morseweb} shows an
$\SL(9)$ web in Morse position, with edges labelled by its minuscule
weights $\omega_k$ with $1 \le k \le 8$.  In this example,
$$\vlambda^{(1)}=\{\omega_4,\omega_5\} \qquad
    \vlambda^{(3)}=\{\omega_7,\omega_6,\omega_1,\omega_4\}.$$
(Note that in $\SL(n)$ in general, $\omega_k^* = \omega_{n-k}$; if an edge
points down as it crosses a line, then we must take the dual weight.)

Let
$$w_k \in \Hom_{\fsp(G)}(\vlambda^{(k-1)}, \vlambda^{(k)})$$
denote the web in the horizontal strip between the lines $\ell_{k-1}$
and $\ell_k$.  By examining the above definition, we see that for each
$1 \le k \le m$, there exists a component $X_k \subset Z(\vlambda^{(k-1)},
\vlambda^{(k)})$ such that $\Psi(w_k) = [X_k]$.  We would like to describe
this component explicitly.  For convenience, if
$$\vp = (p_0,p_1,\ldots,p_m) \in \Gr^{m+1}$$
(with $p_0 = t^0$ for us), define $\sigma_i(\vp)$ by omitting the term $p_i$.

\begin{enumerate}
\item If $w_k$ is an ascending Y vertex that connects the $i$th point on
$\ell_{k-1}$ to the $i$th and $i+1$st points on $\ell_k$, then
$$X_k = \{(\vp,\vp') \in Z(\vlambda^{(k-1)}, \vlambda^{(k)})
    | \vp = \sigma_i(\vp')\}.$$

\item If $w_k$ is a descending Y vertex that connects the $i$th
and $i+1$st points on $\ell_{k-1}$ to the $i$th point on $\ell_k$, then
$$X_k = \{(\vp,\vp') \in Z(\vlambda^{(k-1)}, \vlambda^{(k)})
    | \vp' = \sigma_i(\vp)\}.$$

\item If $w_k$ is a cup that connects the $i$th and $i+1$st points on $\ell_k$,
then
$$X_k = \{(\vp,\vp') \in Z(\vlambda^{(k-1)}, \vlambda^{(k)})
    | \vp = \sigma_i(\sigma_i(\vp')) \}.$$

\item If $w_k$ is a cap that connects the $i$th and $i+1$st points on
$\ell_{k-1}$, then
$$X_k = \{(\vp,\vp') \in Z(\vlambda^{(k-1)}, \vlambda^{(k)})
    | \vp' = \sigma_i(\sigma_i(\vp)) \}.$$
\end{enumerate}

Then $w = w_m \circ \cdots \circ w_1$.   Since $\Psi$ is a functor,
$$\Psi(w) = \Psi(w_m) * \cdots * \Psi(w_1) = [X_m] * \cdots * [X_1].$$

Now, compositions of convolutions can be computed as a single convolution as
$$[X_m] * \cdots * [X_1]
    = (\pi_{0,m})_*(\pi_{0,1}^* [X_1] \cdots \pi_{m-1, m}^*[X_m]),$$
where the intersection products take place in the ambient smooth manifold
$$X = \Gr(\vlambda^{(0)}) \times \cdots \times \Gr(\vlambda^{(m)}).$$
Here $\pi_{k-1,k}$ denotes the projection from $X$ to
$\Gr(\vlambda^{(k-1)}, \vlambda^{(k)})$.

From the definitions, we see that the diskoid configuration spaces $Q(D)$
can be obtained as
$$Q(D) = \pi_{0,1}^{-1}(X_1) \cap \cdots \cap \pi_{m-1, m}^{-1}(X_m).$$
Let
\begin{align*}
c(w) &= \pi_{0,1}^* [X_1] \cap \cdots \cap \pi_{m-1, m}^*[X_m] \\
    &= [\pi_{0,1}^{-1}(X_1)] \cap \cdots \cap [\pi_{m-1, m}^{-1}(X_m)].
\end{align*}
Because we are using the intersection product with support, $c(w)$ lives
in $H_{d(\vlambda)}(Q(D)) $, the homology of the intersection.  When $Q(D)$
is reduced of the expected dimension, then the intersection product of the
homology classes corresponds to the fundamental class of the intersection
(see \cite[Sec. 8.2]{Fulton:intersect}), so $c(w) = [Q(D)]$.

Finally, $\pi: Q(D) \to F(\vlambda)$ is the restriction of $\pi_{0,m}$
to $Q(D)$.  Hence we conclude that $\Psi(w) = \pi_*(c(w))$.
\end{proof}

Because $\pi_*(c(w))$ is supported on $\pi(Q(D))$, we immediately obtain
the following.

\begin{corollary} $\Psi(w)$ is a linear combination of the fundamental
classes of the components of $F(\vlambda)$ which are in the image of $\pi$.
\label{c:support} \end{corollary}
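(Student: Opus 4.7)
The plan is to deduce this corollary directly from \thm{th:homclass2} together with the equidimensionality of $F(\vlambda)$. By \thm{th:homclass2} we have $\Psi(w) = \pi_*(c(w))$ where $c(w) \in H_{2d(\vlambda)}(Q(D))$ and $\pi : Q(D) \to F(\vlambda)$. Since the minuscule spheres $\Gr(\lambda_i)$ are closed and projective, the twisted product $\Gr(\vlambda)$ is projective, and so $\pi$ is a proper map. Hence the image $Y = \pi(Q(D))$ is a closed subvariety of $F(\vlambda)$, and the pushforward $\pi_*(c(w))$ is supported on $Y$ in the sense that it lies in the image of $H_{2d(\vlambda)}(Y) \to H_{2d(\vlambda)}(F(\vlambda))$.

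Next I would invoke the fact, used already in the proof of \thm{th:haines}, that $F(\vlambda)$ is pure of dimension $d(\vlambda)$. Consequently its top homology $H_{2d(\vlambda)}(F(\vlambda))$ is freely generated by the fundamental classes $[Z]$ of its irreducible components $Z$, and the map $H_{2d(\vlambda)}(Y) \to H_{2d(\vlambda)}(F(\vlambda))$ has image spanned by those $[Z]$ for which $Z \subseteq Y$. Indeed, any top-dimensional component $Z'$ of $Y$ either equals a component $Z$ of $F(\vlambda)$ (if $\dim_\C Z' = d(\vlambda)$) or has strictly smaller dimension (and then contributes nothing to $H_{2d(\vlambda)}$).

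Combining these two observations, $\Psi(w) = \pi_*(c(w))$ lies in the subspace of $H_{2d(\vlambda)}(F(\vlambda))$ spanned by $\{[Z] : Z \subseteq \pi(Q(D))\}$, which is precisely the statement of the corollary. There is essentially no technical obstacle; the only point to check carefully is the support statement for $\pi_*$, which follows from properness of $\pi$ together with the factorization of $\pi$ through the inclusion $Y \hookrightarrow F(\vlambda)$.
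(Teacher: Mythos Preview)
Your proof is correct and follows the same approach as the paper. The paper dispatches this corollary in a single sentence---``Because $\pi_*(c(w))$ is supported on $\pi(Q(D))$, we immediately obtain the following''---and you have simply unpacked that sentence by spelling out properness of $\pi$, closedness of the image, and the identification of top homology of an equidimensional variety with the span of fundamental classes of its components.
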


It may not seem clear that $c(w)$ depends only on the web $w$, and not on
the Morse position of $w$ used to construct it.  However, a posteriori,
this must be verified by checking that it is invariant under basic isotopy
moves (for example, straightening out a cup/cap pair).

\section{$\SL(3)$ results}
\label{s:sl3}

In this section, we will prove \thm{th:main} and \thm{th:unitriang}.
In preparation for this result, we need to use and extend the geometry of
non-elliptic webs.  To review, if $w$ is an $A_2$ web and $D = D(w)$ is
its dual diskoid, then $w$ is non-elliptic if and only if $D$ is $\CAT(0)$.

\subsection{Geodesics in $\CAT(0)$ diskoids}
\label{s:geodesics}

We will be interested in combinatorial (meaning edge-travelling) geodesics
in a type $A_2$ diskoid $D$.  These are equivalent to ``minimal cut paths"
of the dual web \cite{Kuperberg:spiders}, when the endpoints of the geodesic
are boundary vertices $D$.  Here we will consider geodesics between vertices
that may be in the interior or on the boundary.  If both vertices are on
the boundary, then the geodesic is called \emph{complete}.

\begin{fullfigure}{f:diamond}{Two geodesics $\gamma$ and $\gamma'$
    connected by a diamond move}
\begin{tikzpicture}[x={(.75cm,0)},y={(.375cm,.659cm)}]
\draw[fill=lightgray] (0,0) -- (1,0) -- (1,1) -- (0,1) -- cycle;
\draw (1,0) -- (0,1); \draw (-2,0) -- (0,0);
\draw (1,1) -- (2,1) -- (2,2);
\draw[darkred,semithick] (-2,.15) -- (-.15,.15)
    -- (-.15,1.15) -- (1.85,1.15) -- (1.85,2);
\draw[darkred,semithick] (-2,-.15) -- (1.15,-.15)
    -- (1.15,.85) -- (2.15,.85) -- (2.15,2);
\node[anchor=south east] at (-.15,1.15) {$\gamma$};
\node[anchor=north west] at (1.15,-.15) {$\gamma'$};
\end{tikzpicture}
\end{fullfigure}

Geodesics in an $A_2$ diskoid are often not unique.  Define a \emph{diamond
move} of a geodesic to be a move in which the geodesic crosses two triangles,
as in \fig{f:diamond}.  (This is equivalent to an ``$H$-move" on a cut
path of a non-elliptic web.)  We say that two geodesics are \emph{isotopic}
if they are equivalent with respect to diamond moves.

\begin{theorem}  Let $p,q$ be two vertices of a $\CAT(0)$, type $A_2$
diskoid $D$.  Then the geodesics between $p$ and $q$ subtend a diskoid
which is a skew Young diagram, with each square split into two triangles.
In particular, all geodesics are isotopic, $D$ is geodesically coherent,
and all geodesics lie between two extremal geodesics.  Both of the extremal
geodesics are concave on the outside.
\label{th:young} \end{theorem}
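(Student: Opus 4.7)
The plan is to reduce to a Euclidean picture using the $\CAT(0)$ hypothesis, then do lattice-path combinatorics. First I would observe that in a type $A_2$ triangulation all edges have the same length and all triangle angles equal $\pi/3$; consequently, an edge-path from $p$ to $q$ is a combinatorial geodesic in the weight-valued sense if and only if its length matches the Euclidean straight-line distance in any chart where it develops into the flat $A_2$ tiling of $\R^2$. Given two combinatorial geodesics $\gamma$, $\gamma'$ from $p$ to $q$, I would use $\CAT(0)$ applied to the bigon $\gamma \cup \gamma'$ to conclude that both paths develop isometrically into a common flat subcomplex of $D$: the uniqueness of the Euclidean geodesic together with the hypothesis that $\gamma$ and $\gamma'$ are length-minimizing forces them to sandwich the Euclidean segment $\overline{pq}$ inside a flat region.

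Next, after choosing coordinates so that $q - p$ lies in the interior of a fixed Weyl chamber, I would write $q - p = a v_1 + b v_2$ for two of the three edge directions of the tiling. In the Euclidean plane, the monotone lattice paths from $p$ to $q$ using only $v_1$- and $v_2$-steps fill out a parallelogram tiled by $ab$ rhombi (each a union of two triangles), and the classical fact about such paths gives a bijection between them and Young diagrams fitting inside an $a \times b$ rectangle, with any two paths connected by rhombus swaps. This is exactly the diamond move.

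Third, I would identify the subregion $U$ of this flat parallelogram which is realized inside $D$. I claim $U$ is the union of all geodesics from $p$ to $q$: any rhombus through which some geodesic passes is flat inside $D$ (else a $\CAT(0)$ shortcut would contradict length-minimality), and conversely any vertex in the parallelogram that is flat in $D$ and lies on the Euclidean segment's ``relevant'' side is on some lattice path. The obstructions to being all of the parallelogram are exactly the positively-curved features of $D$: boundary incursions and interior vertices of total angle strictly greater than $2\pi$. Using $\CAT(0)$-convexity of $U$ (any vertex on a geodesic between two points of $U$ is on a geodesic between $p$ and $q$), each such obstruction cuts off a Young-shape ``corner,'' so $U$ is a skew Young diagram.

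Finally, the boundary of $U$ consists of two monotone lattice paths from $p$ to $q$; these are the extremal geodesics. Concavity on the outside is the combinatorial statement that, along each extremal path, every turn is a $2\pi/3$ turn into $U$ (equivalently, a $4\pi/3$ exterior angle), which is exactly the staircase shape of a skew Young diagram boundary. Isotopy of all geodesics and geodesic coherence of $D$ (with respect to $p, q$) then follow from connectedness of the lattice-path poset under rhombus swaps. I expect the main technical obstacle to be the third step: carefully showing that positively-curved features of $D$ intersect the geodesic parallelogram only in ``corner'' patterns, so that $U$ is genuinely a skew shape and not a more complicated region. The crux there is the $\CAT(0)$ convexity argument for $U$, which rules out holes and non-connected cuts.
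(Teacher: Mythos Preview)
The paper's own treatment of this theorem is very brief: it cites \cite{Kuperberg:spiders} for the case where $p$ and $q$ are boundary vertices, reduces the general case to that one by discarding the simplices of $D$ not lying between any two $p$--$q$ geodesics, and then checks the concavity claim directly by noting that an outside angle of $\pi/3$ contradicts geodesicity while an outside angle of $2\pi/3$ permits a diamond move, contradicting extremality. So the paper does not give a self-contained argument here.

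Your proposal, by contrast, is an attempt at a direct self-contained proof via developing into the flat $A_2$ plane and running lattice-path combinatorics. The overall architecture is sound and is essentially what underlies the cited result, but the order of your steps creates a genuine gap. In step~2 you assert that two geodesics $\gamma,\gamma'$ bound a \emph{flat} subcomplex of $D$, invoking $\CAT(0)$; but $\CAT(0)$ only gives nonpositive curvature at interior vertices, not zero. Flatness of the bigon is exactly what you need to prove, and it does not follow from the bigon hypothesis alone without knowing something about the turning of $\gamma$ and $\gamma'$. The clean way to get it is to first establish the concavity statement you have deferred to step~4: at any non-endpoint vertex $v$ of a geodesic, an outside angle of $\pi/3$ is impossible (the path would not be length-minimizing), and an outside angle of $2\pi/3$ makes a diamond move available because $\CAT(0)$ guarantees the total angle at $v$ in $D$ is at least $2\pi$, so the two triangles completing the rhombus on the outside actually exist. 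Hence extremal geodesics have all outside angles $\ge\pi$. Only then does Gauss--Bonnet on the bigon between the two extremal geodesics force every interior vertex to have angle exactly $2\pi$, giving the flat region you want; your lattice-path/skew-shape description in step~3 then follows immediately, with no need for the vague ``corner-cutting'' argument.

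In short: your plan works once you swap the roles of steps~2 and~4. Prove concavity of extremal geodesics first (this is where $\CAT(0)$ is actually used, to supply the rhombus for the diamond move), deduce flatness of the region between them via Gauss--Bonnet, and then the skew-Young-diagram picture is automatic.
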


Here a \emph{skew Young diagram} is the same as the usual object in
combinatorics with that name, namely the diskoid lying between two geodesic
lattice paths in $\mathbb{Z}^2$.  \fig{f:skew} shows an example in which
the squares have been split so that it becomes an $A_2$ diskoid.

\begin{fullfigure}{f:skew}{A skew partition bounded by extremal geodesics
    $\gamma$ and $\gamma'$}
\begin{tikzpicture}
\begin{scope}[x={(.75cm,0)},y={(.375cm,.659cm)}]
\foreach \x/\y in {0/0,1/0,2/1,3/1,2/2,3/2,3/3,5/4}
{
    \draw[fill=lightgray] (\x,\y) -- ++(1,0) -- ++(0,1) -- ++(-1,0) -- cycle;
    \draw (\x,\y) ++(1,0) -- ++(-1,1);
}
\draw (4,4) -- (5,4);
\draw[darkred,semithick] (-.15,-.15) -- ++(0,1.3) -- ++(2,0) -- ++(0,2)
    -- ++(1,0) -- ++(0,1) -- ++(2,0) -- ++(0,1) -- ++(1.3,0);
\draw[darkred,semithick] (-.15,-.15) -- ++(2.3,0) -- ++(0,1) -- ++(2,0)
    -- ++(0,3) -- ++(2,0) -- ++(0,1.3);
\coordinate (p) at (-.15,-.15); \coordinate (q) at (6.15,5.15);
\coordinate (g) at (1.85,3.15); \coordinate (g1) at (4.15,2.15);
\end{scope}
\fill[darkred] (p) circle (.07); \fill[darkred] (q) circle (.07);
\node[anchor=south east] at (p) {$p$};
\node[anchor=north west] at (q) {$q$};
\node[anchor=south east] at (g) {$\gamma$};
\node[anchor=north west] at (g1) {$\gamma'$};
\end{tikzpicture}
\end{fullfigure}

\thm{th:young} is proven in \cite{Kuperberg:spiders} in the case when $p$
and $q$ are on the boundary.  If they are not on the boundary, then we can
reduce to previous case by the removing the simplices of $D$ that do not
lie between two geodesics.  The final statement, that an extremal geodesic
$\gamma$ is concave outside of the skew Young diagram, is easy to check:
If $\gamma$ has an angle of $\pi/3$, then it is not a geodesic.  If it has
an angle of $2\pi/3$, then an isotopy is available and it is not extremal.

\begin{lemma} If $p$ and $q$ are two vertices of a $\CAT(0)$ diskoid $D$,
then every geodesic $\kappa$ between them extends to a complete geodesic.
\label{l:complete} \end{lemma}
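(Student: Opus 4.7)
The plan is to extend $\kappa$ one combinatorial edge at a time at each endpoint lying in the interior of $D$, continuing until both endpoints reach the boundary. The central tool is the local characterization of geodesics in the piecewise-Euclidean $\CAT(0)$ diskoid $D$: since every 2-cell of $D$ is an equilateral triangle with interior angles $\pi/3$, a path of edges of $D$ is a geodesic if and only if at every vertex interior to the path, the angle measured on each side is at least $\pi$. The forward direction is immediate from the existence of a strict local shortcut across any angle $< \pi$; the reverse direction is the standard local-to-global theorem for $\CAT(0)$ spaces applied to a locally geodesic path.

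For the extension step, suppose $p$ is an endpoint of $\kappa$ lying in the interior of $D$, and let $e$ be the initial edge of $\kappa$ at $p$. By the $\CAT(0)$ hypothesis, the total angle around $p$ is at least $2\pi$, so the number $n$ of triangles of $D$ meeting $p$ satisfies $n \ge 6$; the $n$ edges of $D$ at $p$ are arranged cyclically with angular spacing $\pi/3$. Labelling $e$ as position $0$, any edge $e'$ at position $k$ with $3 \le k \le n-3$ gives an extension of $\kappa$ through $p$ with both angles $k\pi/3$ and $(n-k)\pi/3$ at least $\pi$. Such a $k$ exists precisely because $n \ge 6$, and by the characterization the extended path is again a geodesic, now with $p$ as an interior vertex satisfying the angle condition and with a new endpoint at the far end of $e'$.

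Iterating at both interior endpoints yields a strictly increasing sequence of geodesics in $D$. Since $\CAT(0)$ spaces have unique and hence injective geodesics, no vertex is ever revisited, so the finiteness of the vertex set of $D$ forces the iteration to terminate; termination can only occur when both endpoints lie on the boundary, producing the desired complete geodesic extending $\kappa$. The main obstacle I anticipate is cleanly verifying the local-to-global characterization of combinatorial geodesics in the first paragraph; once that is in hand, the extension and termination steps reduce to essentially combinatorial bookkeeping. An alternative would be to apply \thm{th:young} to the skew Young diagram of geodesics from $p$ or $q$ to a suitably chosen boundary vertex, but the direct edge-by-edge extension above is more transparent.
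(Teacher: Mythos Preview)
Your proof has a genuine gap in its central tool. The claimed characterization --- that a combinatorial edge path is a geodesic if and only if the angle on each side at every interior vertex is at least $\pi$ --- is false in the ``only if'' direction. The paper itself records this just after \thm{th:young}: an angle of $\pi/3$ forces a genuine shortcut, but an angle of $2\pi/3$ only makes a diamond move available; the path is still a geodesic, just not an extremal one. Concretely, in a flat apartment the two-edge path from $0$ to $\omega_1$ to $\omega_1+\omega_2$ is a combinatorial geodesic (its weight-valued length is $\omega_1+\omega_2=d(0,\omega_1+\omega_2)$), yet the angle at $\omega_1$ is $2\pi/3$ on one side. So combinatorial geodesics are not the same as locally Euclidean geodesics, and the local-to-global theorem for $\CAT(0)$ spaces does not give you what you need.

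This breaks your induction as written. Even if you arrange the angle at the new vertex $p$ to be $\ge\pi$ on both sides, the given geodesic $\kappa$ may already carry $2\pi/3$ angles at its interior vertices, so the extended path $e'\cup\kappa$ will not satisfy your angle hypothesis everywhere, and your characterization cannot be invoked. What you actually need is the lemma ``if $\kappa$ is a combinatorial geodesic from $p$ to $q$ and $e'$ is an edge at $p$ making angle $\ge\pi$ on both sides with the first edge of $\kappa$, then $e'\cup\kappa$ is again a combinatorial geodesic.'' This may well be true, but it is a real statement about the weight-valued metric that requires its own argument; it does not follow from $\CAT(0)$ local-to-global. (Your termination argument also appeals to uniqueness of geodesics, which fails combinatorially; injectivity is easy to salvage, but you should not cite uniqueness.)

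The paper's proof avoids this issue entirely by a sweep-out construction: one builds a cyclic sequence of complete geodesics from $p$ to points $r_k$ traversing all of $\partial D$, with consecutive geodesics related by a diamond or triangle move. This family necessarily meets every vertex of $D$, so some complete geodesic from $p$ passes through $q$; a second sweep from that boundary endpoint and a splicing argument using geodesic coherence then produce a complete geodesic containing all of $\kappa$. This approach never needs a local characterization of when a single extension edge preserves geodesicity.
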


\begin{proof} The argument is based on a geodesic sweep-out construction.
We claim that we can make a sequence of geodesics
$$\vgamma = (\gamma_0,\gamma_1,\dots,\gamma_{m-1})$$
from $p$ to the boundary $\partial D$ with certain additional properties.
We require that each consecutive pair $\gamma_k$ and $\gamma_{k+1}$ differ
by either an elementary isotopy or an elementary boundary isotopy (for each
$k \in \Z/m$).  The latter consists either of appending an edge to
$\gamma_k$ or removing the last edge, or a \emph{triangle move} as in
\fig{f:triangle}.

\begin{fullfigure}{f:triangle}{A triangle move connecting
    geodesics $\gamma_k$ and $\gamma_{k+1}$}
\begin{tikzpicture}
\begin{scope}[x={(.75cm,0)},y={(.375cm,.659cm)}]
\draw[fill=lightgray] (-1,0) -- (-1,-1) -- (0,-1) -- cycle;
\draw (-1,0) -- (0,-1);
\draw (-1,-1) -- (-2,-1) -- (-2,-2);
\draw[darkred,semithick] (.15,-1.15) -- (-1.85,-1.15) -- (-1.85,-2);
\draw[darkred,semithick] (-1.15,.15) -- (-1.15,-.85)
    -- (-2.15,-.85) -- (-2.15,-2);
\coordinate (r1) at (.15,-1.15);
\coordinate (r2) at (-1.15,.15);
\coordinate (g1) at (-1,-1.15);
\coordinate (g2) at (-1.15,-.85);
\end{scope}
\fill[darkred] (r1) circle (.07);
\fill[darkred] (r2) circle (.07);
\node[anchor=north west] at (r1) {$r_k$};
\node[anchor=south east] at (r2) {$r_{k+1}$};
\node[anchor=north] at (g1) {$\gamma_k$};
\node[anchor=south east] at (g2) {$\gamma_{k+1}$};
\end{tikzpicture}
\end{fullfigure}

We require that the other endpoint $r_k$ of $\gamma_k$ travel all the way
around $\partial D$ in the counterclockwise direction, as in \fig{f:sweep}.

\begin{fullfigure}{f:sweep}{Making a sequence of geodesics that
    sweep out $D$}
\begin{tikzpicture}
\begin{scope}[x={(.75cm,0)},y={(.375cm,.659cm)}]
\draw[darkred,semithick] (0,0) -- ++(0,1) -- ++(-1,1) -- ++(0,1);
\draw (1,3) -- ++(-4,0) -- ++(0,-2) -- ++(3,-3) -- ++(2,0) -- ++(0,1)
    -- ++(1,-1) -- ++(0,2) -- ++(-1,1) -- ++(0,1) -- cycle;
\coordinate (r) at (-1,3);
\coordinate (g) at (0,1);
\coordinate (d) at (2,0);
\end{scope}
\fill[darkred] (0,0) circle (.07);
\fill[darkred] (r) circle (.07);
\node[anchor=west] at (0,-.1) {$p$};
\node[anchor=north west] at (r) {$r_k$};
\node[anchor=west] at (g) {$\gamma_k$};
\node at (d) {$D$};
\draw[thick,->] (100:.6) arc (100:300:.6);
\end{tikzpicture}
\end{fullfigure}

If $p$ is on the boundary, then $r_0 = p$, but this is okay.  It is easy to
see that if $\vgamma$ exists, then it uses every vertex in $D$.  There is
thus a geodesic $\gamma$ from $p$ to $r \in \partial D$ that contains $q$.
We can then repeat the argument with $r$ replacing $p$, to obtain a
geodesic $\gamma'$ from $r$ to some $s \in \partial D$ that contains $p$.
The geodesic $\gamma'$ might not contain $q$, much less all of $\kappa$.
However, because $D$ is geodesically coherent, the path
$$\gamma'' = \gamma'(s,p) \concat \kappa \concat \gamma(q,r)$$
is a geodesic and satisfies the lemma, as in \fig{f:replace}.

\begin{fullfigure}{f:replace}{A geodesic replacement argument}
\begin{tikzpicture}
\begin{scope}[x={(.75cm,0)},y={(.375cm,.659cm)}]
\draw[semithick,darkred] (0,0) -- ++(-2,2) -- ++(-1,0);
\draw[semithick,darkred] (0,0) -- ++(-1,0) -- ++(-2,2) -- ++(-1,0)
    -- ++(0,-1) -- ++ (-1,0);
\draw[semithick,darkred] (-5,1) -- ++(1,-1) -- ++(2,0) -- ++(1,-1) -- ++(1,0)
    -- ++(0,1) -- ++(2,0);
\draw[semithick,darkgreen] (-5.2,1.2) -- ++(1,0) -- ++(0,1) -- ++(2.2,0)
    -- ++(2,-2) -- ++(2,0);
\draw (-6,2) -- (-5,1) -- (-5,0);
\draw (2,-1) -- (2,1);
\coordinate (p) at (0,0); \coordinate (q) at (-3,2);
\coordinate (r) at (-5,1); \coordinate (s) at (2,0);
\coordinate (k) at (-1.5,1.5); \coordinate (g) at (-2,1);
\coordinate (g1) at (-3,0); \coordinate (g2) at  (-1,1.2);
\end{scope}
\fill[darkred] (p) circle (.07); \fill[darkred] (q) circle (.07);
\fill[darkred] (r) circle (.07); \fill[darkred] (s) circle (.07);
\node[anchor=north west] at (p) {$p$};
\node[anchor=north east] at (q) {$q$};
\node[anchor=east] at (r) {$r$\;};
\node[anchor=west] at (s) {\;$s$};
\node[anchor=north east] at (k) {$\kappa$};
\node[anchor=north east] at (g) {$\gamma$};
\node[anchor=north] at (g1) {$\gamma'$};
\node[anchor=south west] at (g2) {$\gamma''$};
\end{tikzpicture}
\end{fullfigure}

To prove the claim, let $\gamma_0$ be the geodesic of length $0$ if $p \in
\partial D$, and otherwise let $\gamma_0$ be the geodesic from $p$ to any
$r_0 \in \partial D$ which is counterclockwise extremal.  We construct
$\vgamma$ iteratively.  Given $\gamma_k$, we apply a diamond move to
make $\gamma_{k+1}$ if such a move is possible.  If such a move is not
possible, then let $r_{k+1}$ be the next boundary vertex after $r_k$,
and let $\gamma_{k+1}$ be the clockwise-extremal geodesic from $p$
to $r_{k+1}$, among geodesics that do not cross $\gamma_k$.  (In other
words, cut $D$ along $\gamma_k$ to make $D'$, then let $\gamma_{k+1}$
be clockwise-extremal in $D'$.)  By geodesic coherence, the region between
$\gamma_k$ and $\gamma_{k+1}$ is either empty or connected; otherwise
we could splice $\gamma_{k+1}$ with $\gamma_k$, so that $\gamma_{k+1}$ would
not be clockwise-extremal.

If the region between $\gamma_k$ and $\gamma_{k+1}$ is empty, then either
$\gamma_k \subseteq \gamma_{k+1}$ or $\gamma_{k+1} \subseteq \gamma_k$.
If it is not empty, then there are two geodesic segments $\gamma_k(s,r_k)$
and $\gamma_{k+1}(s,r_{k+1})$ make a topological triangle $T$ together
with the edge $(r_k,r_{k+1})$, as in \fig{f:toptriangle}.

\begin{fullfigure}{f:toptriangle}{A topological triangle $T$ made
    from geodesics}
\begin{tikzpicture}
\draw (30:1) -- (150:1);
\draw[darkred,semithick] (150:1) -- (210:.3)
    -- (270:1.5) -- (330:.3) -- (30:1);
\fill[darkred] (30:1) circle (.07); \fill[darkred] (150:1) circle (.07);
\fill[darkred] (270:1.5) circle (.07);
\node[above right] at (30:1) {$r_k$};
\node[above left] at (150:1) {$r_{k+1}$};
\node[below=.5ex] at (270:1.5) {$s$};
\node[below left] at (210:.3) {$\gamma_{k+1}$};
\node[below right] at (330:.3) {$\gamma_k$};
\node at (0,.1) {$T$};
\end{tikzpicture}
\end{fullfigure}

We summarize the properties of the topological triangle $T$: It is
$\CAT(0)$, all three sides are concave, and its angles at the corners are
at least $\pi/3$.  Thus $T$ is flat, all three sides are flat (unlike in
the figure), and all three angles equal $\pi/3$.  Thus, $T$ is a face of
$D$ and $\gamma_k$ and $\gamma_{k+1}$ differ by a triangle move.

As $k$ increases, eventually $r_k = r_0$.  Once the diamond moves are
exhausted for this choice of $r_k$ (there are none if $p$ is on the
boundary), the sequence of geodesics returns to the beginning.
\end{proof}

The sweep-out construction in the proof of \lem{l:complete} also
yields this lemma.

\begin{lemma} Let $D$ be a $\CAT(0)$ diskoid with a boundary vertex $p$.
Then every edge of $D$ either lies on a complete geodesic from $p$ to
some $q \in \partial D$, or it lies in a diamond move or a triangle move
between two geodesics from $p$.
\label{l:everyedge} \end{lemma}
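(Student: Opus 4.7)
The plan is to reuse the cyclic sweep-out construction from the proof of \lem{l:complete}. That construction produces a cyclic sequence $\vgamma = (\gamma_0,\dots,\gamma_{m-1})$ of geodesics from $p$ to boundary vertices $r_k \in \partial D$, with each consecutive pair related by an elementary move---a diamond move, a triangle move, or the appending/removing of a terminal edge. The first step is to observe that these geodesics partition $D$ into $m$ regions indexed by $k \in \Z/m$: the region between $\gamma_k$ and $\gamma_{k+1}$ is either empty (in the appending/removing case), a single triangular face of $D$ (triangle move), or a rhombus of two adjacent triangular faces (diamond move). That these are the only possibilities is precisely the local $\CAT(0)$ analysis already carried out at the end of \lem{l:complete}, where any topological gap-triangle is forced to be flat with $\pi/3$ corner angles and hence a single face.

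To handle the first alternative of the lemma, I would note that if an edge $e$ lies on some $\gamma_k$, then since $p$ and $r_k$ are both boundary vertices, $\gamma_k$ is itself a complete geodesic from $p$ through $e$, with no further extension needed. For the second alternative, the key claim is that the move-regions exhaust all of $D$: as $r_k$ travels once counterclockwise around $\partial D$, the union of the regions covers every face of $D$. Granting this, an edge $e$ that lies on none of the $\gamma_k$ must be an edge of one of the move-regions, hence an edge either of a rhombus (for a diamond move) or of a single triangular face (for a triangle move) realized between two consecutive geodesics from $p$.

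The main obstacle is rigorously verifying that the move-regions tile $D$. I would argue by induction on $k$, tracking the closed subdiskoid $D_k \subseteq D$ swept through step $k$: at each stage $D_{k+1}$ is obtained from $D_k$ by attaching either nothing, a single triangular face (triangle move), or a rhombus of two faces (diamond move) along a portion of $\gamma_k$. Because $r_k$ returns to $r_0$ after $m$ steps, one has $D_m = D$, so every face---and hence every edge---of $D$ is captured by some region. An edge can simultaneously lie on some $\gamma_k$ and in a move region, but the lemma's ``either/or'' is inclusive, so no ambiguity arises. The whole argument is really just an unpacking of the combinatorial content of the sweep already built in \lem{l:complete}, shifted from vertices to edges and faces.
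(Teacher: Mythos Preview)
Your proposal is correct and follows exactly the route the paper intends: the paper's entire proof is the single sentence ``The sweep-out construction in the proof of \lem{l:complete} also yields this lemma,'' and you have simply unpacked that sentence by tracking edges and faces rather than vertices through the sweep. Your observation that the move-regions exhaust the faces of $D$ (so that any edge not on some $\gamma_k$ must be a rhombus diagonal or the boundary edge of a triangle move) is precisely the content the paper leaves implicit.
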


Finally, there is a relation between fans as described in \sec{s:paths}
and non-elliptic webs.  Given a diskoid $D$ with boundary $\vlambda$, let
$\vmu(D)$ be the sequence of distances $d(p,q_k)$, where $p$ is the base
point of $D$ and $q_k$ is the sequence of boundary vertices of $D$.  Then:

\begin{theorem} \cite{Kuperberg:spiders} Given a sequence of $A_2$
minuscule weights $\vlambda$, the map $D \mapsto \vmu(D)$ is a bijection
between $\CAT(0)$ diskoids and minuscule paths of type $\vlambda$.
\end{theorem}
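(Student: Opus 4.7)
My plan is to verify that $\vmu(D)$ is a well-defined minuscule path, then construct an inverse by an inductive growth algorithm, and finally check that the compositions are identities. Well-definedness is essentially by unwinding definitions: the entries $\mu_k = d(p,q_k)$ are dominant by the weight-valued metric on $D$, and $\mu_0 = \mu_n = 0$ because $q_0$ and $q_n$ both coincide with $p$. The step condition $\mu_k - \mu_{k-1} \in W\lambda_k$ follows by noting that the triangle with vertices $p, q_{k-1}, q_k$ in $D$ is $\CAT(0)$ and hence embeds isometrically into an apartment of the affine $A_2$ building; inside the apartment the vector from $q_{k-1}$ to $q_k$ is a Weyl translate of $\lambda_k$, and subtracting the vectors from $p$ gives the desired relation.

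For the inverse, I would proceed by induction on $n$, constructing a chain $D_0 \subset D_1 \subset \cdots \subset D_n$ of $\CAT(0)$ diskoids whose boundary polylines are $(q_0,\ldots,q_k)$ with $d(p,q_i) = \mu_i$. At step $k$, the region of $D_k$ lying between $p$, $q_{k-1}$, and the new boundary vertex $q_k$ is pinned down by \thm{th:young}: the geodesics from $p$ to $q_k$ subtend a unique skew Young diagram whose side lengths are determined by $\mu_{k-1}$, $\lambda_k$, $\mu_k$, and any $\CAT(0)$ extension of $D_{k-1}$ must contain precisely this diagram. Thus $D_k$ is obtained from $D_{k-1}$ by glueing on the part of the skew diagram not already present along the appropriate geodesic from $p$ to $q_{k-1}$. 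By construction $\vmu(D_n) = \vmu$.

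To see mutual inversion, \lem{l:everyedge} guarantees that every edge of a $\CAT(0)$ diskoid $D$ lies on a complete geodesic from $p$ or in a diamond/triangle move between two such geodesics, so $D$ is exactly the union of the skew Young diagrams subtended by consecutive boundary rays emanating from $p$. This union is manifestly determined by $\vmu(D)$, so running the growth algorithm on $\vmu(D)$ returns $D$; conversely, the forward map applied to the algorithm's output reads off $\vmu$ directly from the construction.

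The main obstacle is the uniqueness of the extension at each growth step: a priori, several non-isomorphic $\CAT(0)$ diskoids could be compatible with the same triple $(\mu_{k-1},\lambda_k,\mu_k)$, and \thm{th:young} alone only describes the geodesic region between the three points, not the global shape of $D_k$ relative to $D_{k-1}$. Resolving this amounts to showing that the fan $A(\vlambda,\vmu)$ admits a unique $\CAT(0)$ normal form under elliptic-face reduction in the $A_2$ spider, a confluence (Gr\"obner-style) argument analogous to the one underlying \thm{th:sl3basis}; once that is in hand, the growth step is forced and the bijection drops out.
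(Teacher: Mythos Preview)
The paper does not prove this theorem; it is simply cited from \cite{Kuperberg:spiders} and used as a black box (``So we can write $D(\vlambda,\vmu)$ as the non-elliptic web with boundary $\vlambda$ and minuscule path $\vmu$''). So there is no paper proof to compare against directly, only the original argument in the cited reference, which proceeds via the combinatorics of minimal cut paths and the growth algorithm you sketch.

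That said, your proposal has two weak points worth flagging. First, your argument for the step condition $\mu_k - \mu_{k-1} \in W\lambda_k$ invokes an isometric embedding of the triangle $(p,q_{k-1},q_k)$ into an apartment of the affine building. This is circular in the present paper's logic: the existence of such embeddings is established \emph{after} this theorem, via \thm{th:extend}, which itself relies on knowing the bijection. The step condition should instead be argued intrinsically in the diskoid, using \thm{th:young}: a geodesic from $p$ to $q_{k-1}$ followed by the minuscule edge $(q_{k-1},q_k)$ is either already geodesic (step $= \lambda_k$), or admits a single diamond/triangle shortening (step is a Weyl reflection of $\lambda_k$). Second, you correctly isolate the crux --- uniqueness of the $\CAT(0)$ extension at each growth step --- but then defer it to an unproved confluence statement. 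That confluence is exactly what \cite{Kuperberg:spiders} supplies, so your sketch is accurate as an outline of the original proof, but it is not self-contained: the hard combinatorial work (the diamond lemma for $A_2$ webs) is still outsourced.
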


So we can write $D(\vlambda,\vmu)$ as the non-elliptic web with boundary
$\vlambda$ and minuscule path $\vmu$.

\subsection{Unitriangularity}
\label{s:unitriang}

We apply \sec{s:geodesics} to prove the following result.  It is a bridge
result, based on the geometry of affine buildings, that we will use to
relate web bases to the geometric Satake correspondence; in particular,
to prove \thm{th:unitriang}.

\begin{theorem} Let $\vlambda$ be a minuscule sequence of type $A_2$
and let $\vmu$ be a minuscule path of type $\vlambda$.  If $f \in
Q(A(\vlambda,\vmu))$ is a fan configuration, then it extends
uniquely to a diskoid configuration $f \in Q(D(\vlambda,\vmu))$.
\label{th:extend} \end{theorem}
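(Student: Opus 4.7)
The plan is to combine the $\CAT(0)$ structure of both the diskoid $D = D(\vlambda,\vmu)$ (Theorem \ref{th:young}) and the affine building $\Delta$ (Theorem \ref{th:cat0}). The fan configuration specifies $f(p)$ and $f(q_k) \in \Gr$ for each boundary vertex, subject to $d(f(p),f(q_k)) = \mu_k$; the task is to extend $f$ to the interior vertices of $D$ compatibly with all edge labels.

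For uniqueness, I would first invoke the sweep-out argument in the proof of Lemma \ref{l:complete} to note that every interior vertex $v$ of $D$ lies on some complete combinatorial geodesic from $p$ to a boundary vertex $q_k$. Under any extension of $f$, the image of such a geodesic is a path in $\Delta$ of total weight-length $\mu_k = d(f(p),f(q_k))$, and so is itself a geodesic. By the apartment-geodesic lemma stated just before Proposition \ref{p:sphereball}, this image is confined to every apartment $\Sigma$ containing $f(p)$ and $f(q_k)$, and within the Euclidean apartment $\Sigma$ geodesics are unique. Since the position of $v$ along its geodesic in $D$ is determined by the weight $d_D(p,v)$, and this position transfers isometrically onto the unique Euclidean geodesic in $\Sigma$, the value $f(v)$ is forced by the fan data alone.

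For existence, I would decompose $D$ into ``pie slices'' $\Pi_k$ bounded by the counterclockwise-extremal geodesics from $p$ to consecutive boundary vertices $q_{k-1}, q_k$, together with the polygon edge between them; the sweep-out of Lemma \ref{l:complete} guarantees these slices cover $D$. For each slice, develop its triangulation into an apartment of $\Delta$ that contains $f(p), f(q_{k-1}), f(q_k)$, appealing to Lemma \ref{l:smooth} at each triangle to verify that the requested configuration is non-empty and to pin down the correct orientation. Adjacent slices $\Pi_k$ and $\Pi_{k+1}$ must agree on their shared extremal geodesic, because both realize the unique Euclidean geodesic from $f(p)$ to $f(q_k)$ in $\Delta$ by the uniqueness argument above.

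The main obstacle will be handling interior vertices of $D$ whose total angle strictly exceeds $2\pi$. At such a vertex no single pie slice embeds in a Euclidean apartment, and the developing map must genuinely use the branching of the building at special vertices. The key input here is Theorem \ref{th:young} in its full form: the skew Young diagram structure of the set of geodesics in $D$ is rigid enough that the image of each interior vertex is locally forced once the apartment of each triangle is chosen, and consistency across the branching locus then follows from $\CAT(0)$ uniqueness of geodesics in $\Delta$.
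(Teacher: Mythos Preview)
Your uniqueness argument is essentially the paper's: every interior vertex lies on a complete geodesic from $p$ (Lemma~\ref{l:complete}), the image of that geodesic is forced inside any apartment containing $f(p)$ and $f(q_k)$, and this pins down $f(v)$. The paper phrases the pinning slightly differently (via the sequence of edge weights along $\gamma$ rather than the single weight $d_D(p,v)$), but in the $A_2$ minuscule setting these come to the same thing.

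The existence argument is where you diverge, and where there is a genuine gap. You propose to develop each pie slice $\Pi_k$ into a single apartment containing $f(p),f(q_{k-1}),f(q_k)$. You then correctly identify the obstacle: a slice $\Pi_k$ can contain an interior vertex of $D$ with cone angle $>2\pi$, and then $\Pi_k$ simply does not embed in any Euclidean plane, so no apartment can receive it. Your proposed resolution (``the skew Young diagram structure is rigid enough \ldots'') is not an argument; it does not say how to assign $f$ to vertices interior to $\Pi_k$ once the slice fails to be flat, nor why the resulting assignment respects every edge of the triangulation.

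The paper sidesteps this entirely by never trying to place a two-dimensional region in an apartment. It \emph{defines} $f$ on every vertex via geodesics from $p$ (your uniqueness argument already shows this is well-defined, after one checks that two geodesics through the same vertex $r$ agree, which follows because any apartment containing $f(p)$ and $f(r)$ contains both geodesic segments). Then it verifies the edge condition $d(f(r),f(s))=d_D(r,s)$ one edge at a time, using Lemma~\ref{l:everyedge}: every edge either lies on a geodesic from $p$ (done by construction), or participates in a diamond move between two geodesics to the same $q$ (both lie in any apartment containing $f(p),f(q)$), or is a boundary edge in a triangle move (then $f(p)$ and the edge $(f(r),f(s))$ are two simplices, hence lie in a common apartment by the building axioms). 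Each case needs only a one-dimensional piece of the diskoid inside an apartment, so the $>2\pi$ angles never obstruct anything. This is the idea you are missing; if you replace your slice-development step with this edge-by-edge check, your outline becomes a complete proof.
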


\begin{proof} The construction derives from the constraints that make
the extension unique.  Let $p$ be the base vertex of $D$, so that $f(p)
= 0 \in \Gr$.   Suppose that $q$ is the $k$th boundary vertex of $D$, and
that $\gamma$ is a geodesic from $p$ to $q$.  Then $d(f(p),f(q)) = \mu_k$,
and by definition $\mu_k$ is also the length of $\gamma$.  If $\Sigma$ is
an apartment containing $f(p)$ and $f(q)$, then $f(q) = \mu_k$ in suitable
coordinates in $\Sigma$.  It follows that there is a unique geodesic in
$\Sigma$ with the same sequence of edge weights as $\gamma$, and which
connects $f(p)$ with $f(q)$.  Thus $f$ extends uniquely to $\gamma$.

We claim that this extension of $f$ is consistent for vertices of $D$. First,
every vertex of $D$ is contained in some complete geodesic from $p$ since
by \lem{l:complete} any geodesic from $p$ to a vertex extends to a
complete geodesic.  Suppose that $\gamma$ and $\gamma'$ are two geodesics
from $p$ to $q \in \partial D$ and $q' \in \partial D$, respectively.
Suppose further that $r \in \gamma \cap \gamma'$.  Then every apartment
that contains $p$ and $r$ contains both geodesics $\gamma(p,r)$ and
$\gamma'(p,r)$.  In particular, each apartment $\Sigma \supseteq \gamma$
and $\Sigma' \supseteq \gamma'$ does.  It follows that the choices for
$f(r)$ induced by $\gamma$ and $\gamma'$ are the same.

We claim that if $(r,s)$ is an edge in $D$, then
\eq{e:edge}{d(r,s) = d(f(r),f(s)).}
By \lem{l:everyedge}, there are three cases:  Either $(r,s)$ occurs
in a complete geodesic from $p$ to some $q$, or it occurs in a diamond
move between two such geodesics $\gamma$ and $\gamma'$, or $r$ and $s$
are both on the boundary and $(r,s)$ occurs in a triangle move between
two geodesics $\gamma$ and $\gamma'$.  In the first case, \eqref{e:edge}
is true by construction.  In the second case, $f(\gamma)$ and $f(\gamma')$
are contained in a single apartment, because every apartment contains all
geodesics from $f(p)$ to $f(q)$.  In the third case, there is an apartment
containing $p$ and $(r,s)$ by the axioms for a building, since they are
both simplices.  In both cases, the existence of this common apartment
implies \eqref{e:edge}.
\end{proof}

Now let $\vlambda$ be a minuscule dominant sequence, and let $\vmu$ be
a minuscule path of type $\vlambda$.  Then there is a corresponding
non-elliptic web $w(\vlambda, \vmu)$ with dual diskoid $D(\vlambda, \vmu)$.
There is also a corresponding component $\bar{Q(A(\vlambda, \vmu))}$
of $F(\vlambda)$.

We have two bases for $H_\top(F(\vlambda))$, one given by
$[\bar{Q(A(\vlambda, \vmu))}]$ and the other given by $\Psi(w(\vlambda,
\vmu))$, and both bases are indexed by the minuscule path $\vmu$.
Under the isomorphism
$$H_\top(F(\vlambda)) \cong \Inv(V(\vlambda)),$$
these become the Satake and web bases, respectively, the first by definition
and the second by \thm{th:homclass2}.  Our purpose in this section is to
prove that the transition matrix between these two basis is unitriangular.
Define a partial order on minuscule paths by the rule that
$\vnu\leq\vmu$ when $\nu_i\leq \mu_i$ for all $i$.

\begin{theorem} The transition matrix between the Satake and web bases is
unitriangular with respect to the partial order $\leq$.
\label{th:unitriangweak} \end{theorem}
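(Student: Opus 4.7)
The plan is to analyze the boundary restriction $\pi \colon Q(D(\vlambda,\vmu)) \to F(\vlambda)$ and push the class $c(w(\vlambda,\vmu))$ through it. By \thm{th:homclass2} we have $\Psi(w(\vlambda,\vmu)) = \pi_*(c(w(\vlambda,\vmu)))$, so in view of the Satake basis furnished by \thm{th:haines}, unitriangularity with respect to $\le$ amounts to two assertions: that $\pi_*(c(w(\vlambda,\vmu)))$ is supported on the classes $[\bar{Q(A(\vlambda,\vnu))}]$ with $\vnu \le \vmu$, and that the coefficient of the diagonal class $[\bar{Q(A(\vlambda,\vmu))}]$ equals $1$.

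For the support statement, I would apply the generalized triangle inequality in the affine building. Given any $f \in Q(D(\vlambda,\vmu))$ and any boundary vertex $q_i$, choose a combinatorial geodesic $p = v_0, v_1, \ldots, v_k = q_i$ in $D(\vlambda,\vmu)$; by \thm{th:young}, applied to the $\CAT(0)$ type-$A_2$ diskoid, its total edge length is exactly $\mu_i$. Since $\Delta$ is $\CAT(0)$ by \thm{th:cat0}, the weight-valued triangle inequality gives
$$d(f(p), f(q_i)) \;\le\; \sum_{j=1}^{k} d(f(v_{j-1}), f(v_j)) \;=\; \mu_i$$
in the dominance order. Hence $\pi(f)$ lies in $\bar{Q(A(\vlambda,\vnu))}$ for some $\vnu \le \vmu$, and \cor{c:support} then forces $\pi_*(c(w(\vlambda,\vmu)))$ to be a combination of such classes.

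The diagonal coefficient will be computed from \thm{th:extend}. Restrict $\pi$ to $U = \pi^{-1}(Q(A(\vlambda,\vmu)))$, the preimage of the open smooth stratum of the diagonal component. Existence and uniqueness in \thm{th:extend} imply that $\pi|_U$ is a bijection onto $Q(A(\vlambda,\vmu))$; I would then check that this section is algebraic, since the extension is constructed one apartment at a time and apartment choices depend algebraically on the input fan configuration. It follows that $\pi|_U$ is an isomorphism of schemes, so $Q(D(\vlambda,\vmu))$ is smooth of the expected dimension $d(\vlambda)$ on $U$, and by the last assertion of \thm{th:homclass2} the class $c(w(\vlambda,\vmu))|_U$ is the fundamental class $[U]$. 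Pushing forward through the isomorphism $\pi|_U$ yields $[Q(A(\vlambda,\vmu))]$, which is exactly the restriction of $[\bar{Q(A(\vlambda,\vmu))}]$ to this open set, giving diagonal coefficient $1$.

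The main obstacle will be the scheme-theoretic upgrade of the set-theoretic bijection from \thm{th:extend} to an identification of $c(w(\vlambda,\vmu))|_U$ with $[U]$. A priori the iterated fiber product defining $c(w(\vlambda,\vmu))$ in the proof of \thm{th:homclass2} could be nonreduced or of excess dimension somewhere inside $U$. To defuse this, I would unpack that construction: each triangle configuration space appearing as a factor is smooth by \lem{l:smooth}, and uniqueness of extensions in \thm{th:extend} forces the successive fiber products to be transverse over $Q(A(\vlambda,\vmu))$, so the refined intersection class reduces to the ordinary transverse intersection with multiplicity one. Combined with the support argument, this gives a nonzero contribution only at $\vnu \le \vmu$ and exactly $1$ at $\vnu = \vmu$, establishing \thm{th:unitriangweak}.
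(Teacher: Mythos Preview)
Your overall architecture matches the paper's exactly: split into the off-diagonal vanishing (your support argument via geodesics and \cor{c:support}) and the diagonal-coefficient-equals-one computation via \thm{th:extend}. The support argument is essentially identical to the paper's \lem{l:coeff2}.

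The gap is in your scheme-theoretic upgrade for the diagonal coefficient. Two specific issues. First, you propose to build an algebraic inverse to $\pi|_U$ by observing that ``apartment choices depend algebraically on the input fan configuration.'' The paper explicitly flags this as problematic: the proof of \thm{th:extend} is building-theoretic and, in the paper's words, these arguments ``don't obviously work for $S$-points.'' Apartments are metric/combinatorial objects, and there is no evident way to choose them in algebraic families. Second, you invoke the last clause of \thm{th:homclass2} to identify $c(w)|_U$ with $[U]$, but that clause is a \emph{global} hypothesis on $Q(D)$ (reduced of dimension $d(\vlambda)$ everywhere), which typically fails; you need a local argument along the single component $Z = \bar U$. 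Your fallback claim that uniqueness in \thm{th:extend} forces transversality of the successive fiber products conflates a dimension count with a tangent-space condition and is not justified.

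The paper's fix is different and worth internalizing. Rather than constructing an algebraic section, it proves that $\pi|_U \colon U \to Q(A(\vlambda,\vmu))$ is \emph{injective on $S$-points} for every finite-type $\C$-scheme $S$, using only the genuinely algebraic ingredient hidden in the first paragraph of the proof of \thm{th:extend}: for a geodesic polyline $\vnu$ with $\sum \nu_i = \mu_k$, the convolution map $\Gr(\vnu) \to \Gr$ restricts to a scheme isomorphism over $\Gr(\mu_k)$. This shows every interior vertex value is determined by the boundary, functorially in $S$. Then a short general lemma (bijection on $\C$-points plus injection on $S$-points, target reduced and normal, implies isomorphism) upgrades this to a scheme isomorphism $U \cong Q(A(\vlambda,\vmu))$. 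Finally, the coefficient of $[Z]$ in $c(w)$ is read off as the length of the local ring of the scheme-theoretic intersection along $Z$ (Fulton, Proposition~8.2), which is $1$ because $U$ is reduced. This replaces both your ``algebraic apartment'' and ``transversality from uniqueness'' steps.
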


In the next section, we will use this result to deduce \thm{th:unitriang},
which concerns a weaker partial order and is thus a stronger statement.

We divide the proof of \thm{th:unitriangweak} into the following two lemmas.

\begin{lemma} Suppose that $\vnu \not\le \vmu$.  Then the coefficient of
$[\bar{Q(A(\vlambda, \vnu))}]$ in $\Psi(w(\vlambda, \vmu))$ is 0.
\label{l:coeff2} \end{lemma}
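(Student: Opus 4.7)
The plan is to combine \cor{c:support} with a distance-contraction property of linkage configurations into $\Gr$, exploiting the $\CAT(0)$ structure of $D(\vlambda,\vmu)$ to compute the relevant bound along a combinatorial geodesic.

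First, by \cor{c:support}, a nonzero coefficient of $[\bar{Q(A(\vlambda,\vnu))}]$ in $\Psi(w(\vlambda,\vmu))$ forces the component $\bar{Q(A(\vlambda,\vnu))}$ to lie inside the image $\pi(Q(D(\vlambda,\vmu)))$. In particular, picking any point in the open dense stratum $Q(A(\vlambda,\vnu))$ of that component, I would produce some $f \in Q(D(\vlambda,\vmu))$ with $\pi(f) \in Q(A(\vlambda,\vnu))$. Writing $p$ for the base vertex of $D = D(\vlambda,\vmu)$ and $q_1,\dots,q_{n-1}$ for its remaining boundary vertices, we then have $f(p) = t^0$ and $d(t^0, f(q_k)) = \nu_k$ by the very definition of the fan configuration space.

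The heart of the argument is the distance bound $d(f(p), f(q_k)) \le \mu_k$ for every $k$, in the dominant partial order. Since $w(\vlambda,\vmu)$ is non-elliptic, $D$ is $\CAT(0)$, so by \thm{th:young} its linkage is geodesically coherent and the distance from $p$ to $q_k$ inside $\Gamma(D)$ equals $\mu_k$. Choose a combinatorial geodesic $\gamma$ from $p$ to $q_k$ with edge weights $(\kappa_1,\dots,\kappa_r)$ summing to $\mu_k$. Restricting $f$ to $\gamma$ yields a polyline configuration in $\Gr$ of type $(\kappa_1,\dots,\kappa_r)$, and iterating the standard containment $\bar{\Gr(\kappa)}\cdot\bar{\Gr(\kappa')} \subseteq \bar{\Gr(\kappa+\kappa')}$ puts $f(q_k)$ in the orbit closure $\bar{\Gr(\mu_k)}\cdot f(p)$; equivalently, $d(f(p), f(q_k)) \le \mu_k$.

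Combining the two identities gives $\nu_k \le \mu_k$ for every $k$, i.e., $\vnu \le \vmu$, contradicting the hypothesis $\vnu \not\le \vmu$. The main obstacle is the contraction step: it rests on the ``convolution spectrum'' inequality in $\Gr$ (equivalently, the generalized triangle inequality in the building $\Delta$ from \sec{s:wmetric}), which is standard but uses the weight-valued metric structure in an essential way. Everything else is bookkeeping with \cor{c:support} and the geodesic coherence part of \thm{th:young}.
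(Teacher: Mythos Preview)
Your proof is correct and follows essentially the same approach as the paper's. The paper's argument is terser: it simply asserts that for any $f \in Q(D(\vlambda,\vmu))$ one has $f(q_i) \in \bar{\Gr(\mu_i)}$, whereas you spell out why this holds by choosing a combinatorial geodesic from $p$ to $q_i$ of total weight $\mu_i$ (using \thm{th:young} and the definition of $D(\vlambda,\vmu)$) and then applying the convolution/triangle inequality in $\Gr$.
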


\begin{proof}
By \cor{c:support}, it suffices to show that if $Q(A(\vlambda,
\vnu))$ is contained in $\pi(Q(D(\vlambda, \vmu)))$, then $\vnu \le \vmu$.

Let $f\in Q(D(\vlambda, \vmu))$.  If $q_i$ is the $i$th boundary vertex of
the diskoid $D(\vlambda, \vmu)$, then  $f(q_i) \in \bar{\Gr(\mu_i)}$.
On the other hand, if $\pi(f) \in Q(A(\vlambda, \vnu))$, then $f(q_i)
\in \Gr(\nu_i)$ . Thus $\nu_i \le \mu_i$ for all $i$ as desired.
\end{proof}

\begin{lemma} The coefficient of $[\bar{Q(A(\vlambda, \vmu))}]$
in $\Psi(w(\vlambda, \vmu))$ is 1.
\label{l:coeff1} \end{lemma}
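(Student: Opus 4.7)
The plan is to use Theorem~\ref{th:extend} to identify $Q(A(\vlambda,\vmu))$ with a dense open subvariety of a unique top-dimensional component $C$ of $Q(D(\vlambda,\vmu))$ mapping onto the Satake component $\bar{Q(A(\vlambda,\vmu))}$, and to conclude that $\pi_*(c(w(\vlambda,\vmu)))$ contains $[\bar{Q(A(\vlambda,\vmu))}]$ with coefficient exactly $1$.

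Write $D = D(\vlambda,\vmu)$, $A = A(\vlambda,\vmu)$, $w = w(\vlambda,\vmu)$, and set $Q^o(D) := \pi^{-1}(Q(A)) \subseteq Q(D)$, an open subvariety. By Theorem~\ref{th:extend}, the restriction $\pi : Q^o(D) \to Q(A)$ is a bijection of sets; its inverse $\sigma$ extends each fan configuration by propagating along geodesics inside apartments, which is an algebraic ($\Gv(\cK)$-equivariant) construction. Hence $\sigma$ is regular and $\pi|_{Q^o(D)}$ is an isomorphism of varieties. From the proof of Theorem~\ref{th:haines}, $Q(A)$ is a twisted product of smooth, irreducible triangle configuration spaces, so $Q(A)$ --- and therefore $Q^o(D)$ --- is smooth and irreducible of dimension $d(\vlambda)$.

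It follows that $Q^o(D)$ lies in a unique irreducible component $C$ of $Q(D)$, that $\dim C = d(\vlambda)$, and that $C$ is generically reduced; moreover, since $Q^o(D)$ is open of full dimension in $C$, no other top-dimensional component of $Q(D)$ meets $Q^o(D)$, so $C$ is the only top-dimensional component of $Q(D)$ whose image under $\pi$ is $\bar{Q(A)}$. On $Q^o(D) \subseteq C$ the intersection defining $c(w)$ in the proof of Theorem~\ref{th:homclass2} is proper and the ambient scheme is reduced, so $c(w)|_{Q^o(D)}$ is the fundamental class; consequently $c(w)$ has coefficient $1$ on $[C]$. Since $\pi : C \to \bar{Q(A)}$ is a proper birational morphism of irreducible varieties of the same dimension, $\pi_*[C] = [\bar{Q(A)}]$. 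Lower-dimensional components of $Q(D)$ contribute nothing to $H_{2d(\vlambda)}$, and any other top-dimensional component $C'$ of $Q(D)$ maps into some $\bar{Q(A(\vlambda,\vnu))}$ with $\vnu \ne \vmu$ and hence (by Lemma~\ref{l:coeff2}) with $\vnu < \vmu$. The total coefficient of $[\bar{Q(A(\vlambda,\vmu))}]$ in $\pi_*(c(w))$ is therefore exactly $1$.

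The main obstacle will be verifying that the inverse map $\sigma$ is truly regular and not merely a set-theoretic bijection. The construction of $\sigma$ in the proof of Theorem~\ref{th:extend} is phrased in building-theoretic language --- walking along geodesics inside apartments --- and this must be translated into a scheme-theoretic morphism between configuration spaces so that $Q^o(D)$ is genuinely smooth of dimension $d(\vlambda)$ and the scheme-theoretic intersection defining $c(w)$ genuinely restricts to the fundamental class there. Once this is pinned down, the rest of the argument is a straightforward degree computation.
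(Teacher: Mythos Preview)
Your strategy matches the paper's almost exactly, and you have correctly put your finger on the one genuine difficulty: upgrading the set-theoretic bijection of Theorem~\ref{th:extend} to a scheme isomorphism $\pi^{-1}(Q(A))\cong Q(A)$. The paper resolves this not by constructing your inverse $\sigma$ as a regular map, but by showing that $\pi:\pi^{-1}(Q(A))\to Q(A)$ is a monomorphism of schemes, i.e.\ injective on $S$-points for every finite-type $\C$-scheme $S$. The point is that for a single geodesic from the base point $p$ to a boundary vertex $q$ with edge lengths $\vnu$ summing to $\mu_k$, the convolution morphism $m:\Gr(\vnu)\to\Gr$ restricted over the open cell $\Gr(\mu_k)$ is already an isomorphism of schemes; hence the value of an $S$-valued configuration at any vertex along that geodesic is determined by its value at $q$. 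Since every internal vertex of $D$ lies on some complete geodesic from $p$ (established at the level of $\C$-points in Theorem~\ref{th:extend}), the boundary restriction determines the whole $S$-configuration. One then invokes a general lemma: a morphism to a reduced normal variety which is bijective on $\C$-points and injective on all $S$-points is an isomorphism. This sidesteps the need to write $\sigma$ explicitly and is cleaner than trying to verify that ``propagation along geodesics inside apartments'' is algebraic.

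One small correction to your last paragraph: invoking Lemma~\ref{l:coeff2} to dispose of other top-dimensional components $C'$ is unnecessary and not quite the right logic. Once you know $\pi^{-1}(Q(A))=Q^o(D)$ is smooth and hence contained in the single component $C$, any other component $C'$ is disjoint from $\pi^{-1}(Q(A))$, so $\pi(C')\cap Q(A)=\emptyset$; thus $\pi(C')$ is a proper closed subvariety of $\bar{Q(A)}$ and $\pi_*[C']$ contributes nothing to the coefficient of $[\bar{Q(A)}]$, regardless of which other Satake components it may hit.
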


\begin{proof}
Let $Z = \bar{\pi^{-1}(Q(A(\vlambda, \vmu))}$.  Then $Z$ is a component
of $Q(D(\vlambda, \vmu))$, and it has dimension $d(\vlambda)$ by
\thm{th:extend}.  Recall that from \thm{th:homclass2}, that we have a
homology class $c(w) \in H_{d(\vlambda)}(Q(D))$ such that $\pi_*(c(w))
= \Psi(w)$.  Using the notation of the proof of \thm{th:homclass2},
$$Q(D(\vlambda, \vmu)) =
    \pi_{0,1}^{-1}(X_1) \cap \cdots \cap \pi_{n-1, n}^{-1}(X_n)$$
and
$$c(w) = [\pi_{0,1}^{-1}(X_1)] \cap \cdots \cap [\pi_{n-1,n}^{-1}(X_n)]$$
Since $Z$ is a component of the expected dimension, we see that
the coefficient of $[Z]$ in $c(w)$ is the length of the local ring
of $Q(D(\vlambda, \vmu))$ along $Z$ (by \cite{Fulton:intersect},
Proposition 8.2).  This length equals 1 since following lemma shows that
the scheme $\pi^{-1}(Q(A(\vlambda, \vmu)))$ is isomorphic to the reduced
scheme $Q(A(\vlambda, \vmu))$.

The degree $\pi|_Z$ is 1, so $\pi_*([Z]) = [\bar{Q(A(\vlambda,
\vmu))}]$.  Moreover, $Z$ is the only component of $Q(D(\vlambda, \vmu))$
which maps onto $\bar{Q(A(\vlambda, \vmu))}$, so we conclude that the
coefficient of $[\bar{Q(A(\vlambda, \vmu))}]$ in $\pi_*c(w)$ is also 1,
as desired.
\end{proof}

\begin{lemma}
The restriction of the map $\pi: Q(D(\vlambda, \vmu)) \to F(\vlambda)$
to $\pi^{-1}(Q(A(\vlambda, \vmu))$ is an isomorphism of schemes onto the
reduced scheme $Q(A(\vlambda, \vmu))$.
\end{lemma}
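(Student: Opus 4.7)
The plan is to upgrade the set-theoretic bijection provided by Theorem~\ref{th:extend} to a scheme-theoretic isomorphism. First, I note that Theorem~\ref{th:extend} asserts that every point of $Q(A(\vlambda, \vmu))$ has a unique preimage in $Q(D(\vlambda, \vmu))$, so the restriction of $\pi$ to the set-theoretic preimage $\pi^{-1}(Q(A(\vlambda, \vmu)))$ is bijective on closed points. Meanwhile, iterating Lemma~\ref{l:smooth} across the triangles of the fan $A(\vlambda, \vmu)$ shows that $Q(A(\vlambda, \vmu))$ is a smooth, hence reduced and normal, irreducible variety of dimension $d(\vlambda)$.

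Second, I observe that $\pi$ is a projective morphism, since $Q(D(\vlambda, \vmu))$ is closed in the projective variety $\Gr(\vlambda)$ and the restriction map comes from a linear projection. Restricted to the preimage, $\pi$ is therefore proper. A proper bijection on points is quasi-finite, and a proper quasi-finite morphism is finite. Since the source (with its reduced induced structure) and the target are both irreducible of the same dimension $d(\vlambda)$ and the map is bijective, the finite morphism is birational. Zariski's main theorem, in the form that a finite birational morphism to a normal variety is an isomorphism, then yields the desired isomorphism between $\pi^{-1}(Q(A(\vlambda, \vmu)))_{\mathrm{red}}$ and $Q(A(\vlambda, \vmu))$.

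The remaining subtlety, and the main obstacle, is to rule out any non-reducedness in the scheme-theoretic preimage $\pi^{-1}(Q(A(\vlambda, \vmu)))$ itself, so that the above isomorphism lifts from the reduced structure. To address this, I would upgrade Theorem~\ref{th:extend} to an infinitesimal statement: given a fan configuration over $\mathrm{Spec}\, \C[\eps]/(\eps^2)$, the unique extension to a diskoid configuration likewise exists over $\mathrm{Spec}\, \C[\eps]/(\eps^2)$. The geodesic sweep-out argument behind Theorem~\ref{th:extend} is compatible with infinitesimal deformations because, for each internal vertex $r$ of $D$, choosing a complete geodesic from $p$ through $r$ (as in Lemmas~\ref{l:complete} and~\ref{l:everyedge}) pins down $f(r)$ as an algebraic section of the fibration obtained by restricting to that geodesic; the consistency argument in the proof of Theorem~\ref{th:extend}, which uses that any two apartments through $f(p)$ and $f(q_i)$ contain the same geodesic, gives matching of these sections. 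Hence the preimage is reduced, and the restriction of $\pi$ is a scheme isomorphism onto $Q(A(\vlambda, \vmu))$.
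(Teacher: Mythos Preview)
Your strategy broadly matches the paper's: use Theorem~\ref{th:extend} for the bijection on closed points, note that $Q(A(\vlambda,\vmu))$ is smooth (hence reduced and normal), and then upgrade to a scheme statement via the geodesic argument. The key geometric input you identify in your third paragraph---that along a geodesic from $p$ to a boundary vertex $q_k$, the value $f(r)$ at an internal vertex is determined \emph{algebraically} by $f(q_k)$, because the convolution morphism $\Gr(\vnu)\to\Gr$ restricted over $\Gr(\mu_k)$ is an isomorphism of schemes---is exactly the paper's mechanism. (A minor slip: $Q(D(\vlambda,\vmu))$ is not a closed subvariety of $\Gr(\vlambda)$, since the diskoid has internal vertices and $\pi$ forgets them; but $\pi$ is still proper because $Q(D)$ itself is projective, so your Zariski-main-theorem step for the reduced structures goes through.)

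The substantive gap is the last inference in your third paragraph. You check only $\mathrm{Spec}\,\C[\eps]/(\eps^2)$ and then assert ``hence the preimage is reduced,'' but this is not immediate: bijectivity on dual-number points tells you $\pi$ is unramified, and you would still need to argue (for instance: radicial $+$ unramified $+$ proper $\Rightarrow$ closed immersion, then a surjective closed immersion into a reduced target is an isomorphism) to finish. The paper sidesteps this by noting that your own ``algebraic section'' observation in fact yields injectivity on $S$-points for \emph{every} finite-type $\C$-scheme $S$, not just the dual numbers. It then proves a short abstract lemma: if $\phi:X\to Y$ with $Y$ reduced and normal is bijective on $\C$-points and injective on all $S$-points, one builds $\psi:Y\cong X_{\mathrm{red}}\to X$ and applies $S$-point injectivity with $S=X$ to obtain $\psi\phi=\mathrm{id}_X$. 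Your restriction to first order is self-imposed and makes the final step harder than it needs to be.
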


\begin{proof}
First note that $Q(A(\vlambda, \vmu))$ is reduced since it is isomorphic to a
iterated fibred product of varieties by the proof of \thm{th:haines}.

Let $X = \pi^{-1}(Q(A(\vlambda, \vmu)), Y = Q(A(\vlambda, \vmu))$.  We have
already shown in \thm{th:extend} that the map $\pi: X \to
Y$ gives a bijection at the $\C$-points.  Now, let $S$ be any scheme of
finite-type over $\C$.  The proof of \thm{th:extend} uses some
building-theoretic arguments which don't obviously work for $S$-points.
However, the argument in the first paragraph of the proof does work
for any $S$, as follows.  Following the notation in that paragraph, let
$\gamma$ be a geodesic in $\Gamma$ from the base point $p$ of $D(\vlambda,
\vmu)$ to the $k$-th boundary vertex $q$ and let $\vnu$ be the lengths
along this geodesic (by definition $\sum \nu_i = \mu_k$).  Let $f \in
X(S)$. Then the restriction of the map $m: \Gr(\vnu) \to \Gr$
to $m^{-1}(\Gr(\mu_k))$ is an isomorphism of schemes, and in particular
is an injection on $S$-points.  Hence we see that $f(r)$ is determined by
$f(q)$ for all $r$ along the geodesic.  Since every internal vertex of the
diskoid lies on some geodesic, $f \in X(S)$ is determined by its restriction
to the boundary.  Thus, the map $X(S) \to Y(S)$ is injective.

So we have a map from a scheme to a smooth variety which is a bijection
on $\C$-points and is an injection on $S$-points.  By the following lemma,
the map is an isomorphism.
\end{proof}

\begin{lemma} Let $X, Y$ be finite-type schemes over $\C$. Assume that $Y$
is reduced and normal.  Let $\phi: X \to Y$ be a morphism which
induces a bijection on $\C$-points and an injection on $S$-points for all
finite-type $\C$-schemes $S$. Then $\phi$ is an isomorphism.
\end{lemma}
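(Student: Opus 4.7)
The plan is to translate the $S$-point hypothesis into categorical language: taking $S = X \times_Y X$ with its two projections shows that the diagonal $X \to X \times_Y X$ is an isomorphism, so $\phi$ is a monomorphism in the category of schemes. From the general theory, a monomorphism that is locally of finite type is automatically separated, unramified, and quasi-finite, and moreover the fibre $\phi^{-1}(y)$ over any point $y \in Y$ is either empty or canonically isomorphic to $\mathrm{Spec}(\kappa(y))$ (because a monomorphism to the spectrum of a field is either the empty scheme or the identity).

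Next I would show that $\phi$ is surjective as a map of topological spaces. The image of $\phi$ is constructible by Chevalley's theorem; it contains every closed point of $Y$, since $\phi$ is bijective on $\C$-points and $Y$ is Jacobson (so the closed points of $Y$ coincide with the $\C$-points and are dense). Every nonempty constructible subset of a Jacobson scheme contains a closed point, so the complement of the image of $\phi$ must be empty. Combined with the fibre description above, $\phi$ is a bijection of underlying topological spaces and an isomorphism on residue fields at every point.

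With this in hand I would apply Zariski's main theorem to factor $\phi$ as $X \xrightarrow{j} X' \xrightarrow{g} Y$ with $j$ an open immersion and $g$ finite. After discarding the irreducible components of $X'$ disjoint from $j(X)$, we may assume $j(X)$ is dense in $X'$. Since the generic fibres of $\phi$ are single reduced points with trivial residue extension, $g$ is a finite, surjective, birational morphism to $Y$; and since $Y$ is normal and reduced, the classical form of Zariski's main theorem (a finite birational morphism to a normal scheme is an isomorphism, applied componentwise to the integral normal components of $Y$) forces $g$ to be an isomorphism. Surjectivity of $\phi$ then forces the open immersion $j$ to be surjective and hence an isomorphism, so $\phi$ itself is an isomorphism. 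The subtle step is the birationality claim: one must identify a dense open locus of $Y$ over which $\phi$ is already an isomorphism of schemes and not merely of topological spaces, and verify that this locus meets every irreducible component of the finite completion $X'$; everything else is fairly routine once $\phi$ has been realized as a finite birational morphism to a normal target.
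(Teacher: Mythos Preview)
Your approach is correct but takes a genuinely different route from the paper's. You use the $S$-point hypothesis with $S = X \times_Y X$ to deduce that $\phi$ is a monomorphism, and then invoke the classical machinery of Zariski's Main Theorem together with the normality of $Y$. The birationality step you flag as subtle does go through: since the generic fibre $X_\eta$ is $\mathrm{Spec}(\kappa(\eta))$, a standard spreading-out argument on an affine neighbourhood of the generic point produces a dense open $V \subseteq Y$ over which $\phi$ is already an isomorphism of schemes, and this is enough to run the finite-birational-to-normal argument.

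The paper's proof is considerably shorter and more categorical. It first observes that the composite $X_{\mathrm{red}} \to X \to Y$ is a bijection on $\C$-points between a reduced variety and a normal one, hence an isomorphism by a cited result (Kumar, Thm.~A.11); this immediately yields a section $\psi: Y \to X$ with $\phi\psi = \mathrm{id}_Y$. The $S$-point hypothesis is then applied just once, with $S = X$: since postcomposition with $\phi$ is injective on $\Hom(X,X) \to \Hom(X,Y)$ and both $\mathrm{id}_X$ and $\psi\phi$ are sent to $\phi$, they must be equal. So the paper extracts the inverse almost for free and uses the monomorphism-type hypothesis only at the very end, whereas you front-load it to get structural control on $\phi$ and then appeal to heavier geometry. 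Your argument is more self-contained in spirit (modulo ZMT) and makes the role of normality transparent; the paper's is slicker but leans on an external black box for the key step.
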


\begin{proof} Consider the maps
$$X_\mathrm{red} \to X \to Y.$$
The composition $X_\mathrm{red} \to Y$ is a bijection on $\C$-points and
hence it is an isomorphism \cite[Thm. A.11]{Kumar:kacmoody}.  This allows
us to construct a map $\psi:Y \to X$ such that $\phi \psi = id_Y$.

The fact that $\phi$ induces an injection on $S$-points means that the map
$$\Hom_{\Sch}(X, X) \xrightarrow{\phi \circ} \Hom_{\Sch}(X, Y)$$
is injective.  Consider what happens to $id_X$ and $\psi\phi$ under
this map.  They are sent to $\phi$ and $\phi\psi\phi$ respectively.
But since $\phi\psi = \mathrm{id}_Y$, these two elements of $\Hom_{\Sch}(X,
Y)$ are equal.  Hence by the injectivity, $\mathrm{id}_X = \psi \phi$
and hence $\phi$ is an isomorphism.
\end{proof}

\subsection{Consequences of the cyclic action}
\label{s:cyclic}

The goal of this section is to prove \thm{th:main} and then derive some
corollaries.  The proof is based on \thm{th:extend}.  However, we first
need to understand the cyclic action on webs and Satake fibres, \ie, the
action that results from changing the base point of a polygon or a diskoid.

Fix a minuscule sequence $\vlambda =(\lambda_1, \dots, \lambda_n)$ and
consider the corresponding Satake fibre $F(\vlambda)$.  Also regard the
indices of the sequence $\vlambda$ as lying in $\Z/n$.   For each $i \in
\Z/n$, we define
$$\vlambda^{(i)} = (\lambda_{i+1}, \lambda_{i+2}, \dots, \lambda_{n-1},
    \lambda_0, \lambda_1, \dots, \lambda_i)$$
to be the $i$th cyclic permutation of $\vlambda$ (so that $\vlambda^{(0)}
= \vlambda$).

Let $Z$ be an irreducible component of $F(\vlambda)$.  Since $G(\cO)$ is
connected and acts on $F(\vlambda)$, we see that $Z$ is $G(\cO)$-invariant.
Define
$$Z_1 = \{([g_1^{-1} g_2], \dots, [g_1^{-1} g_n], t^0)
    | ([g_1], \dots, [g_n]) \in Z \subset F(\vlambda^{(1)}) \},$$
and by iteration define $Z_i \subseteq F(\vlambda^{(i)})$ for all $i \in \Z/n$.
This yields a bijection
$$\Irr(F(\vlambda)) \cong \Irr(F(\vlambda^{(1)}))$$
which we call geometric rotation of components.
Another way to think about $Z_i$ is to think about the unbased configuration
space of $P(\vlambda)$ and to note that it fibres over $\Gr$ in $n$
different ways, by choosing each of the $n$ vertices of $P(\vlambda)$
as the base point.  (But the geometry of these fibrations is subtle,
because the fibres do not have to be isomorphic algebraic varieties.)

A straightforward calculation in convolution algebras (in which all
intersections are transverse), shows that geometric rotation matches
pivotal rotation in $\hconv(\Gr)$.  At the same time, \thm{th:satbasis}
tells us that the diagram
\begin{equation} \label{e:rotation}
\begin{tikzpicture}[description/.style={fill=white,inner sep=2pt},baseline]
\matrix (m) [matrix of math nodes, row sep=3em,
column sep=2.5em, text height=1.5ex, text depth=0.25ex]
{ H_\top(F(\vlambda)) & \Inv(V(\vlambda)) \\
H_\top(F(\vlambda^{(1)})) & \Inv(V(\vlambda^{(1)})) \\ };
\path[->] (m-1-1) edge node[auto] {$\Phi$} (m-1-2)
    (m-1-1) edge node[auto] {$R$} (m-2-1)
    (m-2-1) edge node[auto] {$\Phi$} (m-2-2)
    (m-1-2) edge node[auto] {$R$} (m-2-2);
\end{tikzpicture}
\end{equation}
commutes, where the invariant spaces on the right are in $\rep(G)^u_\min$.

By \thm{th:haines}, $Z = \bar{Q(A(\vlambda,\vmu))}$ for some minuscule path
$\vmu$ of type $\vlambda$.  From $(\vlambda,\vmu)$, we obtain a diskoid $D
=  D(\vlambda, \vmu)$.  In $D$, the distances from the base point to the
other boundary vertices are given by $\vmu$.  Now for each $i \in \Z/n$,
let $\vmu^{(i)}$ denote the sequence of distances from the $i$th boundary
vertex to the rest of the boundary.  Since a rotated $\CAT(0)$ diskoid is
still a $\CAT(0)$ diskoid, we see that $D = D(\vlambda^{(i)}, \vmu^{(i)})$
as well.

\begin{lemma}
For each $i$, $Z_i = \bar{Q(A(\vlambda^{(i)}, \vmu^{(i)}))}$.
\label{l:cyclic} \end{lemma}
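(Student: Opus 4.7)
The plan is to use \thm{th:extend} to lift each fan configuration $f \in Q(A(\vlambda,\vmu))$ to a diskoid configuration $\tilde{f}$ on $D = D(\vlambda,\vmu)$, and then observe that the geometric rotation taking $Z$ to $Z_i$ amounts to rebasing $\tilde{f}$ at the $i$-th boundary vertex of $D$. Since the underlying $\CAT(0)$ diskoid $D$ does not depend on the choice of base vertex, and since by definition $\vmu^{(i)}$ is the sequence of distances from the $i$-th boundary vertex to the others in $D$, the rebased configuration will automatically satisfy the fan constraints for $(\vlambda^{(i)},\vmu^{(i)})$.

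Concretely, given $f \in Q(A(\vlambda,\vmu)) \subseteq Z$, let $\tilde{f} \in Q(D)$ be its unique extension from \thm{th:extend}. The proof of that theorem, working apartment by apartment, shows more than mere edge-length preservation: it actually realizes $\tilde{f}$ as an isometric embedding of $D$ into the building $\Delta$. Let $q_i$ denote the $i$-th boundary vertex of $D$, and pick $g \in \Gv(\cK)$ with $g \cdot \tilde{f}(q_i) = t^0$. The translated configuration $g \cdot \tilde{f}$ is then a diskoid configuration based at $q_i$, and its boundary restriction is by the explicit definition of $Z_i$ precisely the image of $f$ under the geometric rotation $Z \to Z_i$. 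Because $\tilde{f}$ is an isometric embedding, $d(t^0,\,g\tilde{f}(q_{i+k})) = d_D(q_i,q_{i+k}) = \mu^{(i)}_k$, so this rebased configuration lies in $Q(A(\vlambda^{(i)},\vmu^{(i)}))$.

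This yields a map $Q(A(\vlambda,\vmu)) \to Q(A(\vlambda^{(i)},\vmu^{(i)}))$ whose image is contained in $Z_i$, and running the same argument for the inverse rotation $Z_i \to Z$ shows the map is bijective between dense opens. Taking closures, $\bar{Q(A(\vlambda^{(i)},\vmu^{(i)}))} \subseteq Z_i$; but by \thm{th:haines} both are irreducible top-dimensional components of $F(\vlambda^{(i)})$, so they must coincide.

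The main obstacle is verifying that the extension from \thm{th:extend} preserves not merely edge distances but also distances between an arbitrary boundary vertex $q_i$ (not the chosen base $p$) and the remaining boundary vertices, so that the rib lengths of the rotated fan come out to $\vmu^{(i)}$. This follows from combining \lem{l:complete} (every geodesic in $D$ extends to a complete geodesic) with the apartment axioms: any geodesic in $D$ from $q_i$ to $q_{i+k}$ lifts under $\tilde{f}$ into a single apartment of $\Delta$, where its length is preserved by the same argument used to establish edge-length preservation in the proof of \thm{th:extend}.
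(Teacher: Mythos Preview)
Your approach is genuinely different from the paper's: you attempt a direct geometric argument via the diskoid extension of \thm{th:extend}, whereas the paper argues indirectly through \thm{th:unitriangweak} together with a combinatorial observation (a unitriangular change-of-basis matrix determines a unique bijection between its index sets, and any symmetry of the matrix must respect that bijection). The paper in fact remarks just before its proof that the lemma ``is (as far as we know) a non-trivial identification of two different cyclic actions,'' signaling that the authors looked for and did not find a direct argument of the kind you propose.

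The gap in your proposal is precisely at the step you flag as the main obstacle. You need the extension $\tilde{f}$ to preserve the distance from the rotated base $q_i$ to every other boundary vertex $q_{i+k}$, and you assert that a $D$-geodesic from $q_i$ to $q_{i+k}$ ``lifts under $\tilde{f}$ into a single apartment of $\Delta$.'' But the building axioms only put the two \emph{endpoints} $\tilde{f}(q_i)$, $\tilde{f}(q_{i+k})$ into a common apartment $\Sigma$; they say nothing about the $\tilde{f}$-image of an edge path between them, unless you already know that image is a $\Delta$-geodesic of the right length, which is what you are trying to prove. The ``same argument used to establish edge-length preservation'' in \thm{th:extend} does not transfer: each of its three cases (edge on a geodesic from $p$; edge in a diamond move between geodesics from $p$; edge in a boundary triangle move) relies on the standing hypothesis $d(\tilde{f}(p),\tilde{f}(q_k))=\mu_k$ for the \emph{original} base $p$. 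Re-running it from $q_i$ would require exactly the hypothesis $d(\tilde{f}(q_i),\tilde{f}(q_{i+k}))=\mu^{(i)}_k$, so the justification is circular. In the paper's logic, the fact that a generic fan configuration extends to a globally isometric diskoid configuration (equivalently, lands in the set $U$) is a \emph{consequence} of \lem{l:cyclic}, not an input to it: the density of $U$ in $Z$ is deduced from the lemma, after which the identification $Q_g(D)\cong U$ follows. Your argument effectively presupposes that density.
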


Although this lemma may look purely formal, it is (as far as we know)
a non-trivial identification of two different cyclic actions.  The cyclic
action used to define $Z_i$ is defined directly from the geometric Satake
correspondence; it comes from the fact that the unbased configuration
space of $P(\vlambda)$ fibres over $\Gr$ in more than one way.  The cyclic
action on the right, in particular the definition of $\vmu^{(i)}$, comes
instead from rotating webs.  The two cyclic actions ``should be" the same
because the diagram analogous to \eqref{e:rotation} for webs commutes
(since $\spd(\SL(3))$ is equivalent to $\rep^u(\SL(3))$).  However, the
lemma is non-trivial because it is not true that the invariant vector
$\Psi(w(\vlambda, \vmu))$ coming from the web equals the fundamental class
of the corresponding component.

\begin{proof} Our proof uses \thm{th:unitriangweak}, the unitriangularity
theorem.  Let $M$ be the unitriangular change of basis matrix; the rows
of $M$ are labelled by the web basis, while the columns are indexed by the
geometric Satake basis.  Since both bases are cyclically invariant as in
the diagram \eqref{e:rotation}, there is a combinatorial cyclic action on
the rows and columns of $M$ that takes $M$ to itself.

Suppose for the moment that $M$ is an abstractly unitriangular matrix whose
rows and columns are labelled by two sets $A$ and $B$.  In other words,
there exists an unspecified bijection $A \cong B$, and a linear or partial
order of $A$ that makes $M$ unitriangular.  Then the partial order may
not be unique, but the bijection is.  If we choose any compatible linear
order, then it is easy to see that the expansion of $\det M$ has only
one non-zero term.  This term selects the unique compatible bijection.
Since it is unique, it intertwines the two cyclic actions in our case.
\end{proof}

Say that $\vnu \le_S \vmu$ when $\vnu^{(i)} \le \vmu^{(i)}$ for all $i \in
\Z/n$. If $D$ and $E$ are the diskoids of $w(\vnu)$ and $w(\vmu)$, then
this condition says that $d_D(p,q) \le d_E(p,q)$ for every two vertices
on their common boundary.  \thm{th:unitriang} follows by combining
\thm{th:unitriangweak} with \lem{l:cyclic}.

We define a subset $U \subseteq Z$ as follows:
$$U = \{ (L_i)_{i \in \Z/n} \in F(\vlambda) | d(L_i, L_j) = \mu^{(i)}_j \}.$$
\lem{l:cyclic} shows that $U$ is dense in $Z$.  The following proposition
then completes the proof of \thm{th:main}.

\begin{proposition} Restricting the configuration to the boundary gives
an isomorphism
$$\pi: Q_g(D) \stackrel{\cong}{\longto} U.$$
\end{proposition}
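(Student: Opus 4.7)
The plan is to identify $Q_g(D)$ with the preimage $\pi^{-1}(U)$ inside $Q(D)$, and then apply the scheme-theoretic isomorphism $\pi: \pi^{-1}(Q(A(\vlambda,\vmu))) \stackrel{\cong}{\to} Q(A(\vlambda,\vmu))$ established in \sec{s:unitriang}. First I would check that $U$ is an open subvariety of $Q(A(\vlambda,\vmu))$: every $(L_i) \in Q(A(\vlambda,\vmu))$ already satisfies $d(L_0,L_j) = \mu_j$, and by \lem{l:cyclic} the additional constraints $d(L_i,L_j) = \mu^{(i)}_j$ cut out a dense subset of the component $Z$ on which the pairwise distances attain their generic maximal values, so the locus is open. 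Restricting the cited isomorphism to $U$ would then yield the proposition, provided that the preimage of $U$ equals $Q_g(D)$.

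The main step is therefore the identification $\pi^{-1}(U) = Q_g(D)$. One direction is immediate: a global isometry preserves the pairwise distances between boundary vertices, so it lands in $U$. The hard part is the converse, showing that if $f \in Q(D)$ preserves edge lengths and the boundary pairwise distances take the prescribed values $\mu^{(i)}_j$, then $f$ in fact preserves all pairwise distances in $D$. Given vertices $r, s \in D$, my plan is to choose a combinatorial geodesic $\kappa$ from $r$ to $s$ and use \lem{l:complete} to extend $\kappa$ to a complete geodesic $\gamma$ running between two boundary vertices $q_i$ and $q_j$. The $D$-length of $\gamma$ equals $\mu^{(i)}_j$, which by hypothesis also equals $d(f(q_i),f(q_j))$. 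Since $f$ is edge-preserving, the image $f(\gamma)$ is a piecewise-linear path in $|\Delta|$ whose total locally Euclidean length equals the Euclidean distance between its endpoints. By \thm{th:cat0}, $|\Delta|$ is $\CAT(0)$, so such a path must be a geodesic.

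Now I would invoke the apartment lemma that every geodesic in $|\Delta|$ between two points lies in any common apartment of those points: pick an apartment $\Sigma$ containing $f(q_i)$ and $f(q_j)$, so that $f(\gamma) \subseteq |\Sigma|$. Within $\Sigma$, weight-valued distances between pairs of points on $f(\gamma)$ are computed as sums of edge contributions along the straight segment, and these sums match the corresponding sub-sums along $\kappa$ because $f$ is edge-preserving. In particular, $d(f(r),f(s)) = d_D(r,s)$, so $f \in Q_g(D)$.

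With $Q_g(D) = \pi^{-1}(U)$ established as a subscheme, the proposition follows by restricting the scheme-theoretic isomorphism of \sec{s:unitriang} to the open subvariety $U \subseteq Q(A(\vlambda,\vmu))$. The principal obstacle is the geodesic-to-apartment step in the inclusion $\pi^{-1}(U) \subseteq Q_g(D)$; the remaining pieces (openness of $U$, reducedness, and the scheme isomorphism) are either formal or have already been proved.
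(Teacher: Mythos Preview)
Your overall plan matches the paper's: reduce to showing $\pi^{-1}(U)=Q_g(D)$ as subsets of $Q(D)$, and then restrict the scheme isomorphism from \sec{s:unitriang}. The paper's own two-sentence proof does exactly this, citing only \lem{l:complete} for the hard inclusion and leaving the reader to supply the mechanism from the proof of \thm{th:extend}.

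However, the $\CAT(0)$ step you propose does not work as written. You assert that the edge-path $f(\gamma)$ in $|\Delta|$ has total locally Euclidean length equal to the Euclidean distance between its endpoints $f(q_i)$ and $f(q_j)$. This is false in general. A combinatorial geodesic in an $A_2$ diskoid with $d_D(q_i,q_j)=a\omega_1+b\omega_2$ consists of $a+b$ unit edges, so its Euclidean length is $(a+b)\|\omega_1\|$; the straight-line Euclidean distance is $\|a\omega_1+b\omega_2\|$, which is strictly smaller whenever both $a,b>0$ (for instance $\|\omega_1+\omega_2\|=\sqrt{3}\,\|\omega_1\|<2\|\omega_1\|$). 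Thus $f(\gamma)$ is \emph{not} a Euclidean geodesic in $|\Delta|$, and you cannot invoke the apartment lemma to force it into a single apartment this way.

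The correct mechanism bypasses the Euclidean metric. By \lem{l:cyclic}, the hypothesis $\pi(f)\in U$ means that for \emph{every} boundary vertex $q_i$ taken as base point, $\pi(f)$ lies in the fan locus $Q(A(\vlambda^{(i)},\vmu^{(i)}))$. The construction in the proof of \thm{th:extend} (which works from any base vertex) then shows that the unique extension to $Q(D)$ sends each complete geodesic emanating from $q_i$ into a single apartment, with the edge labels realised as successive vector increments there. Inside that apartment the weight-valued distance between any two intermediate images $f(r),f(s)$ is the sub-sum of those increments, namely $d_D(r,s)$. Together with \lem{l:complete} this gives $\pi^{-1}(U)\subseteq Q_g(D)$; the reverse inclusion and the isomorphism then follow as you say.
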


\begin{proof} By definition, $U$ consists of those configurations of $D$
that preserve all distances between boundary vertices.  By \lem{l:complete},
these are exactly the configurations that preserve all distances in $D$.
\end{proof}

If $f \in Q_g(D)$ is a global isometry, then in particular it is an embedding
of $D$ into the affine building $\Delta$.  This has an interesting area
consequence.

\begin{lemma} Let $K$ be a 2-dimensional simplicial complex with trivial
homology, $H_*(K,\Z) = H_*(\pt)$.  Then every simplicial 1-cycle $\alpha$
in $K$ is the homology boundary of a unique 2-chain $\beta$.
\label{l:homol} \end{lemma}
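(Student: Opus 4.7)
The statement is a routine fact from simplicial homology, so the plan is essentially just to unpack the chain complex $C_\bullet(K, \Z)$ and use the two pieces of the hypothesis $H_*(K,\Z) = H_*(\pt)$ separately.

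For existence, I would argue as follows. The 1-cycle $\alpha$ represents a class in $H_1(K, \Z)$. By hypothesis $H_1(K, \Z) = 0$, so $\alpha$ lies in the image of $\partial_2: C_2(K,\Z) \to C_1(K,\Z)$. This gives a 2-chain $\beta$ with $\partial \beta = \alpha$.

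For uniqueness, suppose $\beta$ and $\beta'$ are two 2-chains with $\partial \beta = \partial \beta' = \alpha$. Then $\beta - \beta'$ lies in $\ker \partial_2 = Z_2(K,\Z)$. Since $K$ is 2-dimensional, there are no 3-simplices, so $C_3(K,\Z) = 0$ and hence $B_2(K,\Z) = \mathrm{im}\, \partial_3 = 0$. Therefore $H_2(K, \Z) = Z_2(K, \Z)$. By hypothesis $H_2(K, \Z) = 0$, so $Z_2(K, \Z) = 0$, and in particular $\beta - \beta' = 0$.

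There is no real obstacle here: both directions are one-line consequences of $H_1 = H_2 = 0$, combined with the dimension hypothesis that forces the 2-cycles to coincide with the 2-chains mapped to zero by $\partial_2$. The only thing to be slightly careful about is distinguishing the roles of $H_1$ (used for existence) and $H_2$ together with the dimension of $K$ (used for uniqueness).
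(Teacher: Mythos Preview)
Your proof is correct and follows essentially the same approach as the paper. The paper's proof is terser and only spells out the uniqueness step (existence being immediate from $H_1=0$), but the logic is identical: the difference of two bounding $2$-chains is a $2$-cycle, hence null-homologous since $H_2=0$, and null-homologous forces equality because there are no $3$-simplices.
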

\begin{proof} If $\beta_1$ and $\beta_2$ are two such 2-chains, then
$\beta_1-\beta_2$ is closed and therefore null-homologous.  Since $K$
has no 3-simplices, the only way for $\beta_1$ and $\beta_2$ to be
homologous is if they are equal.
\end{proof}

\begin{theorem} If a $\CAT(0)$, type $A_2$ diskoid $D$ is embedded in an
affine building $\Delta$, then it is the unique least area diskoid that
extends the embedding of its boundary $P$.
\end{theorem}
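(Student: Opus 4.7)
The plan is to convert the area minimization into a uniqueness statement for simplicial $2$-chains in the affine building, via \lem{l:homol}.  First, I would observe that for $G = \SL(3)$ the dual group is $\Gv = \PGL(3)$, so $\Delta$ has combinatorial dimension two.  By \thm{th:cat0} the building is $\CAT(0)$ and hence contractible, so $H_*(\Delta,\Z) = H_*(\pt)$ and \lem{l:homol} applies to $\Delta$ itself.  All $2$-simplices of $\Delta$ are congruent equilateral Euclidean triangles with some common area $A_0$, so the area of any filling diskoid equals $A_0$ times its number of $2$-cells.

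Next, let $D'$ be any type $A_2$ diskoid with boundary $P$ together with a simplicial map $f:D' \to \Delta$ extending the given embedding of $P$.  Orienting $D'$ coherently as a planar region produces a finite simplicial $2$-chain $c_{D'} \in C_2(\Delta,\Z)$ whose boundary is the $1$-cycle $[P]$ carried by the embedded boundary polygon.  By \lem{l:homol}, any two fillings of this boundary agree as $2$-chains, so $c_{D'} = c_D =: \beta$ for every such $D'$.  Since $D$ is embedded, each coefficient of $c_D$ is $0$ or $\pm 1$, and after orienting $D$ compatibly with $\Delta$ the coefficients lie in $\{0,1\}$; hence $\beta_\sigma \in \{0,1\}$ for every $2$-simplex $\sigma$ of $\Delta$.

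Writing $n_\sigma^{+},n_\sigma^{-}$ for the number of $2$-cells of $D'$ mapping to $\sigma$ with agreeing and opposite orientation respectively, one has $\beta_\sigma = n_\sigma^{+} - n_\sigma^{-}$, and therefore
$$\mathrm{area}(D') \;=\; A_0 \sum_\sigma (n_\sigma^{+} + n_\sigma^{-}) \;\ge\; A_0 \sum_\sigma |\beta_\sigma| \;=\; \mathrm{area}(D),$$
with equality if and only if $\{n_\sigma^{+},n_\sigma^{-}\} = \{0,|\beta_\sigma|\}$ for every $\sigma$.  Under equality, $f$ is locally injective on $2$-cells with image equal to that of $D$; combined with the identification of boundaries, this forces a PL isomorphism $D' \cong D$.

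The principal subtlety I anticipate is handling lower-dimensional appendages: a general diskoid can carry tree-like portions which contribute nothing to $c_{D'}$ but do appear in $P$.  Such appendages add $0$ to the area and are already forced to lie inside $D$ by the embedded boundary polygon, so they can be peeled off from both $D$ and $D'$ at the outset.  The only real bookkeeping is verifying that this reduction preserves the hypotheses, after which the three-chain argument above closes the proof.
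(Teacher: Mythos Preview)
Your approach is essentially the same as the paper's: push the diskoid forward to a simplicial $2$-chain in $\Delta$, invoke \lem{l:homol} to see that this chain is independent of the filling, and then compare the area of an arbitrary filling $D'$ to the $\ell^1$-norm of that chain, which equals the area of $D$ because $D$ is embedded.  Your $n_\sigma^{\pm}$ bookkeeping just makes explicit the inequality the paper states in one line.

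The one place where your write-up is thinner than the paper's is the uniqueness step under equality.  Knowing that $f:D'\to\Delta$ is injective on $2$-cells with image $f(D)$ does not by itself force $D'\cong D$: you still need to argue that the triangles of $D'$ are glued along edges in the same pattern as those of $D$.  The paper supplies the missing observation: since $D$ is embedded, each edge of $\Delta$ is incident to at most two faces of $f(D)$, so the adjacency relation on the image determines the adjacency relation in $D$, and hence in $D'$.  Adding that sentence (and checking that the boundary attachment is then forced) would close the argument.  Your remark on tree-like appendages is a reasonable extra precaution; the paper does not discuss it, but it is harmless here.
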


\begin{proof} Let $f$ be the embedding.  Then $f_*([D])$ is a 2-chain whose
1-norm is the area of $D$.  If $f':D' \to \Delta$ is another extension of
$P$, then $f'_*([D']) = f_*([D])$ and the area of $D'$ cannot be smaller
than the area of $D$.  Moreover, if they have equal area, then $f^{-1}
\circ f$ is a bijection between the faces of $D'$ and the faces of $D$.
The faces of $D'$ must be connected in the same way as those of $D$, and
attached to $P$ in the same way, because each edge in $\Delta$ has at most
two faces of $f(D)$.
\end{proof}

By contrast, the $A_2$ spider relations \eqref{e:a2spider} reduce the
area of a diskoid.  The following proposition is easy to check, as well
as inevitable given \prop{l:homol} and \thm{th:main}:

\begin{proposition} If $w$ is a web with a face with 2 or 4 sides, so
that the dual diskoid $D$ has a vertex with 2 or 4 triangles, then in
any configuration $f:D \to \Gr$ these triangles land on top of each other
in pairs.
\label{p:ontop} \end{proposition}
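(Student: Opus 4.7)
The plan is to verify the claim separately for bigon faces (2 sides) and square faces (4 sides), in each case by analyzing the combinatorics of the dual diskoid $D$ at the elliptic vertex; the square case also requires the projective geometry of the link of a vertex in the affine building.

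For a bigon face $F$ of $w$, each of the two trivalent vertices $v_1, v_2$ is incident to the two bigon arcs and one outgoing edge. The outgoing edge at each $v_i$ separates the two outside faces $F_t, F_b$ bordering the bigon, so by the face-adjacency rule the dual triangles $T_1, T_2$ of $D$ incident to $V$ both have the same vertex set $\{V, V_t, V_b\}$. Thus $f(T_1)$ and $f(T_2)$ are determined by the same three points in $\Gr$ and coincide for any configuration $f$. (If the local picture has $F_t = F_b$, the two triangles are even more degenerate, but the conclusion holds trivially.)

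For a square face of $w$, label the four trivalent vertices $v_1,\dots,v_4$ cyclically, write $F_i$ for the outside face adjacent along the outgoing edge at $v_i$, and let $V_i$ be the vertex of $D$ dual to $F_i$. The four triangles around $V$ are $T_i = \{V, V_{i-1}, V_i\}$ (indices cyclic), and the outer edges $(V_{i-1}, V_i)$ form a 4-cycle in $D$. Because the edges of a square face alternate in orientation, the weight labels on the edges $(V, V_i)$ alternate between $\omega_1$ and $\omega_2$. Given any configuration $f\colon D \to \Gr$, this 4-cycle embeds as $f(V_1)-f(V_2)-f(V_3)-f(V_4)$ in the link of $f(V)$ in the affine building $\Delta$. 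Because $\Gv = \PGL(3)$, this link is canonically the spherical building of $\SL(3)$, which is the bipartite point-line incidence geometry of $\P^2(\C)$, with vertices at $\omega_1$-distance from $f(V)$ playing the role of ``points'' and those at $\omega_2$-distance the role of ``lines'' (an edge of the link records precisely that the three points span a $2$-simplex of $\Delta$, i.e., an incidence).

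The classical fact that two distinct lines in $\P^2$ meet in a unique point and two distinct points lie on a unique line forces the 4-cycle to degenerate, so at least one of $f(V_1) = f(V_3)$ or $f(V_2) = f(V_4)$ holds. A direct combinatorial check then shows that the four triangles pair up as required: if $f(V_1) = f(V_3)$ then $f(T_1) = f(T_4)$ and $f(T_2) = f(T_3)$, and if $f(V_2) = f(V_4)$ then $f(T_1) = f(T_2)$ and $f(T_3) = f(T_4)$. The main obstacle is setting up the dictionary between the link of $f(V)$ in the affine building and the $\P^2$ incidence geometry, carefully tracking the coloring of vertices and the minuscule weights; once that dictionary is in place, the projective-geometry step is immediate and the pairing is a direct consequence.
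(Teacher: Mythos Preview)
Your proof is correct. The paper does not actually give a proof of this proposition; it states only that it is ``easy to check, as well as inevitable given \lem{l:homol} and \thm{th:main}.'' Your argument supplies precisely the direct check the authors have in mind: the bigon case follows from the observation that both dual triangles share the same vertex set $\{V,V_t,V_b\}$, and in the square case you correctly identify the link of $f(V)$ in the $\tilde{A}_2$ building with the point--line incidence graph of $\P^2(\C)$, which has girth six, so any $4$-cycle must collapse a pair of opposite vertices.

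The paper's second hint, via \lem{l:homol} and \thm{th:main}, points to a more indirect homological explanation: the pushforward $f_*([D])$ is the \emph{unique} $2$-chain in $\Delta$ bounding the image of $\partial D$, and since a non-elliptic (least-area) diskoid with the same boundary realizes that chain with strictly fewer triangles, the extra triangles in an elliptic $D$ must cancel in pairs in $f_*([D])$. That argument explains \emph{why} the phenomenon occurs but is less self-contained; your local, hands-on verification is what ``easy to check'' is meant to convey.
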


\prop{p:ontop} thus motivates the relations \eqref{e:a2spider}
as moves that locally remove area from a configuration $f$.

\subsection{Web bases are not Satake}
\label{s:notsatake}

In \sec{s:unitriang}, we showed that the transformation between the
web basis and the Satake basis is unitriangular with respect to the
given order. Thus it is reasonable to ask if this transformation is
the identity.  As with Lustzig's dual canonical basis, there is an early
agreement between the two. For any web with no internal faces, that is,
whose dual diskoid has no internal vertices, the image of the map $\pi$
is $\bar{Q(A(\vlambda, \vmu))}$ by \thm{th:smooth}, and $\pi$ is injective.
It follows from \cor{c:support} and \lem{l:coeff1} that $[\bar{Q(A(\vlambda,
\vmu))}]$ is the web vector.

Now consider the following web $w(\vmu)$, with the indicated base point:
$$w(\vmu) = \begin{tikzpicture}[scale=0.4,baseline]
\foreach \angle in {0,120,240}
{
    \begin{scope}[rotate=\angle,web]
    \foreach \x/\y in {-3.5/-.866,-3.5/.866,-2/-1.732,-2/0,-2/1.732,
        -.5/-2.598,-.5/-.866,-.5/.866,-.5/2.598,1/-1.732,1/0,1/1.732,
        2.5/-.866,2.5/.866}
    \draw[midto] (\x,\y) -- ++(1,0);
    \end{scope}
}
\draw (0,0) circle (3.605);
\fill (240:3.605) circle (.1);
\end{tikzpicture}$$
In \cite{Kuperberg:notdual}, it was shown that this is the first web whose
invariant vector is not dual canonical.  This is the web associated with
the minuscule path
\begin{multline*}
\vmu = (0, \omega_1, \omega_1+\omega_2, \omega_1+2\omega_2, 3\omega_2,
    \omega_1+3\omega_2, \\
    2\omega_1+2\omega_2, 3\omega_1+\omega_2, 3\omega_1,
    2\omega_1+\omega_2, \omega_1+\omega_2, \omega_2, 0)
\end{multline*}
of type
$$ \vlambda = (\omega_1, \omega_2, \omega_2, \omega_1, \omega_1, \omega_2,
\omega_2, \omega_1, \omega_1, \omega_2, \omega_2, \omega_1).
$$

Let
$$\vnu=(0,\omega_1,0,\omega_2,0,\omega_1,0,\omega_2,0,\omega_1,0,\omega_2,0).$$
This is another minuscule path also of type $\vlambda$; the
corresponding web $w(\vnu)$ is much simpler and is both a Satake vector
and a dual canonical vector:
$$w(\vnu) = \begin{tikzpicture}[scale=0.4,baseline]
\foreach \angle in {0,120,240}
{
    \begin{scope}[rotate=\angle,web]
    \draw[midto] (1,3.464) .. controls (.5,2.598)
        and (-.5,2.598) .. (-1,3.464);
    \draw[midto] (1,-3.464) .. controls (.5,-2.598)
        and (-.5,-2.598) .. (-1,-3.464);
    \end{scope}
}
\draw (0,0) circle (3.605);
\fill (240:3.605) circle (.1);
\end{tikzpicture}$$

In \cite{Kuperberg:notdual}, it was shown that
$$\Psi(w(\vmu)) = b(\vmu) + b(\vnu),$$
where $b(\vmu)$ denotes the dual canonical basis vector
indexed by $\vmu$.

\begin{theorem} Let $w(\vmu)$, $\vlambda$, $\vmu$, and $\vnu$ be as
above.  Then the invariant vector $\Psi(w(\vmu))$ is not in the Satake
basis.  More precisely, it has a coefficient of 2 for the basis vector
$[\bar{Q(A(\vlambda, \vnu))}]$.
\end{theorem}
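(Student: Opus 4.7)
The plan is to apply Theorem \ref{th:homclass2} directly to $w(\vmu)$, expressing $\Psi(w(\vmu))$ as a pushforward $\pi_*(c(w(\vmu)))$ of a homology class on $Q(D)$ where $D=D(\vlambda,\vmu)$, and then to compute the coefficient of $[\bar{Q(A(\vlambda,\vnu))}]$ as a local degree of $\pi$ over a generic point of that component. As a first step, I would verify that $Q(D)$ is reduced of the expected dimension $d(\vlambda)$, so that $c(w(\vmu))=[Q(D)]$ outright, following the proof strategy of Lemma~\ref{l:coeff1}. Combined with \cor{c:support} and \lem{l:coeff2}, this gives that $[\bar{Q(A(\vlambda,\vnu))}]$ can appear with nonzero coefficient (since $\vnu\le\vmu$ componentwise, as is straightforward to verify), and that its coefficient equals the degree of $\pi:Q(D)\to F(\vlambda)$ over $\bar{Q(A(\vlambda,\vnu))}$.

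Second, I would fix a generic point $f\in Q(A(\vlambda,\vnu))$ and enumerate the extensions $\tilde f\in Q(D)$ with $\pi(\tilde f)=f$. Because $\vnu$ alternates between $0$ and fundamental weights, the six ``even'' boundary vertices of $P(\vlambda)$ all coincide with the base point $t^0\in\Gr$, while the six ``odd'' vertices lie generically in the spheres $\Gr(\omega_1)$ or $\Gr(\omega_2)$. Under the restriction map, this forces a great deal of folding of $D$ in the affine building $\Delta$: the three outer petals of $D$ (dual to the three outer hexagonal-like regions of the flower web $w(\vmu)$) are rigidly determined by the boundary configuration, by an argument close in spirit to \thm{th:extend}, since all edges emanating from the collapsed even vertices must lie in a prescribed apartment. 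The remaining freedom is localized in the central hexagonal region of $D$ (dual to the central hexagonal face of $w(\vmu)$).

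Third, the heart of the argument is to show that exactly two extensions exist, each contributing with multiplicity one. I would exploit the $\Z/3$ rotational symmetry of $w(\vmu)$ and the $\CAT(0)$ geometry of the building: once the outer petals are placed, the six neighbours of the central hexagon are fixed in $\Delta$, and the central six internal vertices of $D$ must be placed compatibly with the six triangles of the hexagonal core. Because the central hexagon is dual to an elliptic (hexagonal) face of $w(\vmu)$, this placement has precisely two solutions --- essentially the two ways of filling a degenerate hexagonal cycle in a single apartment of $\Delta$ --- and I would verify that both configurations lie in $Q(D)$ and are reduced intersection points of the ambient cycles appearing in the definition of $c(w(\vmu))$.

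The main obstacle is the last step: establishing that the local count is exactly $2$ rather than $1$ or $3$. Upper-bounding the count requires a rigidity argument using \lem{l:complete} and the coherence of geodesics in a $\CAT(0)$ building to propagate determinations from the boundary inward; lower-bounding the count requires producing two honest non-isomorphic configurations, which I expect to do by exhibiting explicit representatives inside a common apartment that differ by reflecting the central hexagonal sub-configuration across a natural axis. As a consistency check, one may compare this computation with the prior identity $\Psi(w(\vmu))=b(\vmu)+b(\vnu)$ of \cite{Kuperberg:notdual} together with the equality $b(\vnu)=[\bar{Q(A(\vlambda,\vnu))}]$, which holds because $w(\vnu)$ has no internal faces (as noted at the start of \sec{s:notsatake}); the two computations should both yield the coefficient $2$, and a coefficient of $2$ immediately precludes $\Psi(w(\vmu))$ from being a member of the Satake basis.
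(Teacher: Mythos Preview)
Your overall strategy---compute the degree of $\pi:Q(D)\to F(\vlambda)$ over a generic point of $Q(A(\vlambda,\vnu))$---matches the paper's. However, the execution has two substantive gaps.

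First, the heart of the computation is not what you describe. The paper does not place the configurations inside a single apartment and find two solutions by reflection. Instead it identifies $\Gr(\omega_1)$ and $\Gr(\omega_2)$ with the points and lines of $\mathbb{CP}^2$, translates the edge constraints of the web into classical incidence conditions (each $p_i'$ lies on $\ell_i$; each $\ell_i'$ passes through $p_i$, $p_{i-1}'$, $p_i'$), and solves explicitly in barycentric coordinates. The system collapses to a fixed-point equation for a composition of six fractional linear transformations, hence to a generic quadratic with two roots; the two solutions are \emph{not} related by any visible building symmetry (compare the paper's \fig{f:twosolutions}). Your proposed ``reflection in an apartment'' does not produce the second solution, and a $\CAT(0)$ rigidity argument alone will not give the lower bound of~$2$. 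The paper also argues separately that the central face $c$ must land in $\Gr(0)$ rather than $\Gr(\omega_1+\omega_2)$, and that the scheme-theoretic generic fibre consists of two \emph{reduced} points---this is what yields multiplicity~$1$ for the relevant component in $c(w)$. The paper never verifies (and does not need) that $Q(D)$ is globally reduced of the expected dimension, so your first step is both unnecessary and potentially hard.

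Second, your consistency check is backwards. From $\Psi(w(\vmu))=b(\vmu)+b(\vnu)$ and $b(\vnu)=[\bar{Q(A(\vlambda,\vnu))}]$ you can only conclude that the Satake coefficient of $[\bar{Q(A(\vlambda,\vnu))}]$ in $\Psi(w(\vmu))$ equals $1$ plus the corresponding Satake coefficient in $b(\vmu)$, which is a priori unknown. Indeed the paper uses the computed coefficient~$2$ together with the dual-canonical coefficient~$1$ to deduce \thm{th:notdual}, that the Satake and dual canonical bases differ; so this ``check'' cannot independently confirm the value~$2$.
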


\begin{proof}
We will show that the general fibre of $\pi$ over $Q(A(\vlambda, \vnu))$
is of size 2. We give the faces of the web the following labels:
$$\begin{tikzpicture}[scale=0.6]
\foreach \angle in {0,120,240}
{
    \begin{scope}[rotate=\angle,web]
    \foreach \x/\y in {-3.5/-.866,-3.5/.866,-2/-1.732,-2/0,-2/1.732,
        -.5/-2.598,-.5/-.866,-.5/.866,-.5/2.598,1/-1.732,1/0,1/1.732,
        2.5/-.866,2.5/.866}
    \draw[midto] (\x,\y) -- ++(1,0);
    \end{scope}
}
\node at ($(240:2)+(180:2)$) {$p_1$};
\node at ($(180:2)+(120:2)$) {$\ell_1$};
\node at ($(120:2)+(60:2)$) {$p_2$};
\node at ($(60:2)+(0:2)$) {$\ell_2$};
\node at ($(0:2)+(300:2)$) {$p_3$};
\node at ($(300:2)+(240:2)$) {$\ell_3$};
\node at ($(240:1)+(180:1)$) {$\ell_1'$};
\node at ($(180:1)+(120:1)$) {$p_1'$};
\node at ($(120:1)+(60:1)$) {$\ell_2'$};
\node at ($(60:1)+(0:1)$) {$p_2'$};
\node at ($(0:1)+(300:1)$) {$\ell_3'$};
\node at ($(300:1)+(240:1)$) {$p_3'$};
\node at (0,0) {$c$};
\end{tikzpicture}$$
If $f\in Q(D(\vlambda, \vmu))$ then $\pi(f)\in Q(A(\vlambda,\vnu))$ if
and only if $f$ assigns $p_i\in \Gr(\omega_1)$ and $\ell_i\in \Gr(\omega_2)$
on those faces and assigns $t^0 \in \Gr(0)$ to all empty faces. In order
to determine the fibre of $\pi$ over a point in $Q(A(\vlambda,\vnu))$
we must calculate the possible choices for $p_i'$, $\ell_i'$ and $c$
satisfying the appropriate conditions. Since $p_i\in \Gr(\omega_1)$ and
$\ell_i\in \Gr(\omega_2)$, this forces $p_i'\in \Gr(\omega_1)$ and $\ell_i'\in
Gr(\omega_2)$ and $c\in\bar{\Gr(\omega_1+\omega_2)}$. We can think of the
points of $\Gr(\omega_1)$ and $\Gr(\omega_2)$ as, respectively, the points
and lines in $\mathbb{CP}^2$. Then the conditions given by the edges of
the web are as following: $p_i'$ is a point on the line $\ell_i$ and $\ell_i'$
is a line containing the points $p_i$, $p_{i-1}'$ and $p_i'$.

\begin{fullfigure*}{f:twosolutions}
    {The two solutions to the problem for the given $\ell_i$ and $p_i$}
\subfloat[]{\begin{tikzpicture}[scale=0.55]
\useasboundingbox (-6,-5) rectangle (6,5);
\clip (-5,-5) rectangle (5,5);
\draw[domain=-5:5] plot(\x,{(-1-1.81*\x)/0.99});
\draw[domain=-5:5] plot(\x,{(--3.51--0.22*\x)/2.34});
\draw[domain=-5:5] plot(\x,{(-0.06-2.03*\x)/-1.35});
\draw (-2.56,4.5) node {$\ell_1$};
\draw (-4.5,1.55) node {$\ell_2$};
\draw (2.50,4.5) node {$\ell_3$};

\draw[blue,domain=-5:5] plot(\x,{(--2.34--0.37*\x)/3.3});
\draw[blue,domain=-5:5] plot(\x,{(--2.23-3.24*\x)/0.89});
\draw[blue,domain=-5:5] plot(\x,{(-1.51-0.91*\x)/-1.16});
\draw (4.5,0.75) node {$\ell'_1$};
\draw (-4.5,-1.65) node {$\ell'_2$};
\draw (-0.2,4.5) node {$\ell'_3$};

\fill[blue] (-0.89,0.61) circle (.1); \draw (-0.98,0.1) node {$p'_1$};
\fill[blue] (0.27,1.52) circle (.1); \draw (0.51,2.2) node {$p'_2$};
\fill[blue] (0.48,0.76) circle (.1); \draw (1.1,0.45) node {$p'_3$};

\fill[red] (-1.31,1.38) circle (.1); \draw (-1.00,1.7) node {$e_1$};
\fill[red] (1.04,1.59) circle (.1); \draw (1.75,2) node {$e_2$};
\fill[red] (-0.32,-0.43) circle (.1); \draw (0.2,-0.5) node {$e_3$};

\fill[darkgreen] (-4.19,0.24) circle (.1); \draw (-4.1,-0.24) node {$p_1$};
\fill[darkgreen] (-1.89,-0.16) circle (.1); \draw (-1.87,-0.7) node {$p_2$};
\fill[darkgreen] (1.37,-2.48) circle (.1); \draw (1.9,-2.5) node {$p_3$};
\end{tikzpicture}}
\subfloat[]{\begin{tikzpicture}[scale=0.55]
\useasboundingbox (-6,-5) rectangle (6,5);
\clip (-5,-5) rectangle (5,5);
\draw[domain=-5:5] plot(\x,{(-1-1.81*\x)/0.99});
\draw[domain=-5:5] plot(\x,{(--3.51--0.22*\x)/2.34});
\draw[domain=-5:5] plot(\x,{(-0.06-2.03*\x)/-1.35});
\draw (-2.56,4.5) node {$\ell_1$};
\draw (4.5,2.4) node {$\ell_2$};
\draw (2.50,4.5) node {$\ell_3$};

\draw[blue,domain=-5:5] plot(\x,{(-6.07-1.7*\x)/4.43});
\draw[blue,domain=-5:5] plot(\x,{(-3.34-1.4*\x)/2.12});
\draw[blue,domain=-5:5] plot(\x,{(-5.7-2.58*\x)/4.33});
\draw (4.5,-2.67) node {$\ell'_1$};
\draw (4.5,-3.6) node {$\ell'_2$};
\draw (3.7,-4.5) node {$\ell'_3$};

\fill[blue] (0.24,-1.46) circle (3pt); \draw (0.58,-1) node {$p'_1$};
\fill[blue] (-4.09,1.12) circle (3pt); \draw (-3.85,1.6) node {$p'_2$};
\fill[blue] (-0.75,-1.08) circle (3pt); \draw (-0.7,-1.7) node {$p'_3$};

\fill[red] (-1.31,1.38) circle (3pt); \draw (-1.00,1.7) node {$e_1$};
\fill[red] (1.04,1.59) circle (3pt); \draw (1.75,2) node {$e_2$};
\fill[red] (-0.32,-0.43) circle (3pt); \draw (0.2,-0.5) node {$e_3$};

\fill[darkgreen] (-4.19,0.24) circle (3pt); \draw (-4.1,-0.24) node {$p_1$};
\fill[darkgreen] (-1.89,-0.16) circle (3pt); \draw (-1.87,0.3) node {$p_2$};
\fill[darkgreen] (1.37,-2.48) circle (3pt); \draw (1.55,-3) node {$p_3$};
\end{tikzpicture}}
\end{fullfigure*}

Suppose that either the $p_i$ are not collinear and or the $\ell_i$ are not
concurrent.  Then by the duality of points and lines, we may assume that
the $\ell_i$ are not concurrent. Let $e_i$ be the intersection of $\ell_i$ and
$\ell_{i+1}$.  Then we can express the points $p_i'$ in barycentric coordinates
given by $e_i$:
\begin{align*}
p_1'=(t_1,0,1-t_1) \\ p_2'=(1-t_2,t_2,0) \\ p_3'=(0,1-t_3,t_3).
\end{align*}
Note that by doing this we restrict ourselves to an affine subspace of
$\P^2$, so we may lose, but we don't gain solutions. The collinearity
condition results in the equations
$$p_i=(1-s_i)p_i'+s_ip_{i-1}.$$
Solving this problem amounts to solving
\begin{align*}
(1-s_1)t_1 &= p_{11} & s_1(1-t_3) &= p_{12} \\
(1-s_2)t_2 &= p_{22} & s_2(1-t_1) &= p_{23} \\
(1-s_3)t_3 &= p_{33} & s_3(1-t_2) &= p_{31},
\end{align*}
where $p_{ij}$ are the barycentric coordinates of the $p_i$. If none of
these coordinates are $0$, then we can eliminate all but one variable to get
the relation
$$t_1=\frac{p_{11}}{1-\frac{p_{12}}{1-\frac{p_{33}}
{1-\frac{p_{31}}{1-\frac{p_{22}}{1-\frac{p_{23}}{1-t_1}}}}}}.$$
The right side of this equation is a composition of fractional
linear transformations that condenses to a single fractional linear
transformation
$$t_1 = \frac{\alpha_{11} t_1 + \alpha_{12}}{\alpha_{21}t_1 + \alpha_{22}}$$
with generic coefficients.  Thus, generically,
we obtain a quadratic equation for $t_1$ with 2 solutions.

It remains to determine the face $c$, which lies in
$\Gr(\omega_1+\omega_2)$. If $c\not\in \Gr(0)$, then the conditions given
by the edges of the web would be $p_i'=p_j'$ and $\ell_i'=\ell_j'$ for all
$i,j$ which cannot happen since either $p_i$ are not collinear or $\ell_i$
are not concurrent. Thus for any solution of the above equations, we get
exactly one element in $Q(D(\vlambda, \vmu))$. And for any generic point
$p \in Q(A(\vlambda, \vnu))$, the fibre $\pi^{-1}(p)$ has 2 points.

Let $X$ denote the closure in $Q(D(\vlambda, \vmu))$ of the union of all
fibres $\pi^{-1}(p)$ with 2 points.  Then $X$ is either a component of
$Q(D(\vlambda, \vmu))$ or a union of two components.  Moreover, $X$ contains
all components of $Q(D(\vlambda, \vmu))$ which map onto $Q(A(\vlambda,
\vnu))$.  Since the above argument shows that the scheme-theoretic fibre of
$\pi$ over a general point of $Q(A(\vlambda, \vnu))$ is two reduced points,
we also know that $X$ is generically reduced.  Hence the coefficient of
$[X]$ in the homology class $c(w)$ from \thm{th:homclass2} is 1.  Since the
map $\pi: X \to Q(A(\vlambda, \vnu))$ is of degree 2 and since $X$
contains all components mapping to $Q(A(\vlambda, \vnu))$, the coefficient of
$[Q(A(\vlambda, \vnu))]$ in $\pi_*(c(w))$ is 2.  In particular, $\pi_*(c(w))$
differs from $[\bar{Q(A(\vlambda, \vmu))}]$, as desired.
\end{proof}

In fact, we suspect that $Q(D(\vlambda, \vmu))$ only has two components,
which would imply that
$$\Psi(w(\vmu)) = [\bar{Q(A(\vlambda, \vmu))}]
    + 2 [\bar{Q(A(\vlambda,\vnu))}].$$
Otherwise, $\Psi(w(\vmu))$ has these two terms and perhaps others.
Either way, the coefficient of 2 is different from what arises in the dual
canonical basis \cite{Kuperberg:notdual}:
$$\Psi(w(\vmu)) = b(\vmu) + b(\vnu).$$
Thus,

\begin{theorem} The geometric Satake bases for invariants of $G = \SL(3)$
are eventually not dual canonical.
\label{th:notdual} \end{theorem}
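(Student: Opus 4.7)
The proof will be a direct comparison of the expansion of one carefully chosen invariant vector in the two bases. I would take as the test vector the web $\Psi(w(\vmu))$ from the preceding theorem, where $\vlambda$, $\vmu$, and $\vnu$ are the specific minuscule sequence and paths exhibited there. The preceding theorem establishes the Satake side: the coefficient of the Satake basis vector $[\bar{Q(A(\vlambda,\vnu))}]$ in $\Psi(w(\vmu))$ equals $2$. The dual canonical side is supplied by \cite{Kuperberg:notdual}, which computes
$$\Psi(w(\vmu)) = b(\vmu) + b(\vnu)$$
in the dual canonical basis, so the coefficient of $b(\vnu)$ is $1$.

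The only subtlety is to justify that the indexing of the two bases by minuscule paths is compatible, so that $[\bar{Q(A(\vlambda,\vnu))}]$ and $b(\vnu)$ really are the elements to be compared. For this I would invoke \thm{th:unitriang} together with the analogous unitriangularity of the dual canonical basis against non-elliptic webs: both bases transform from the web basis by an upper unitriangular matrix (with respect to the partial order $\leq_S$ refined by the number of vertices, as remarked after \thm{th:unitriang}). An abstract unitriangular change of basis is determined uniquely by its labeling, essentially by the same determinant argument used in \lem{l:cyclic}: the unique non-vanishing term in $\det M$ picks out the bijection of indices. Consequently the dual canonical and Satake bases must share the same path-indexed leading-term structure against non-elliptic webs, and in particular $b(\vnu)$ and $[\bar{Q(A(\vlambda,\vnu))}]$ occupy the same slot.

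Assuming the two bases coincide as sets, the two expressions for $\Psi(w(\vmu))$ would then have to produce equal coefficients on this common basis vector, which forces $1 = 2$, a contradiction. I would conclude by noting that this comparison works regardless of whether $Q(D(\vlambda,\vmu))$ has exactly two components: only the coefficient of $[\bar{Q(A(\vlambda,\vnu))}]$ is used, and that has been pinned down in the preceding theorem.

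The main obstacle is really the bookkeeping in the second paragraph: confirming that the labeling bijection between Satake and dual canonical bases is the identity on minuscule paths. If that matching is known from the literature in a cleaner form, the proof reduces to a one-line numerical contradiction; otherwise I would need to spell out the argument that any basis unitriangularly related to non-elliptic webs is canonically indexed by minuscule paths through its leading term, and that this indexing is preserved under any further unitriangular change of basis between two such bases.
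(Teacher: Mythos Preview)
Your proposal is correct and follows the same basic idea as the paper: compare the expansion of $\Psi(w(\vmu))$ in the two bases and observe that they are incompatible. The paper's argument is in fact shorter than yours and sidesteps the indexing issue entirely. It simply notes that the Satake expansion of $\Psi(w(\vmu))$ contains a term with coefficient $2$ (namely $[\bar{Q(A(\vlambda,\vnu))}]$), whereas the dual canonical expansion $\Psi(w(\vmu)) = b(\vmu) + b(\vnu)$ has only coefficients equal to $1$. If the two bases coincided as sets, the multiset of nonzero coefficients in any fixed vector's expansion would have to agree; since one expansion contains a $2$ and the other does not, they cannot coincide. No matching of labels is required, so the bookkeeping you flag as the ``main obstacle'' can be dropped.
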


This is not such a surprising statement in light of the well-known fact that
the canonical and semicanonical basis do not coincide (as a consequence of
the work of Kashiwara-Saito \cite{KS:crystal}).  In both \thm{th:notdual}
and in the canonical/semicanonical situation, a homology basis does not
coincide with a basis defined using a bar-involution.  The analogy between
these two results could perhaps be made precise using skew Howe duality
($\SL(3)$, $\SL(n)$-duality).

It is known that $\Psi(w(\vmu))$ is the first basis web that is not
dual canonical, \ie, the only basis web up to rotation with 12 or fewer
minuscule tensor factors.  We conjecture that it is also the first basis
web for $\SL(3)$ that is not geometric Satake.  Equivalently, we conjecture
that all three bases first diverge at the same position.
\begin{question} For arbitrary $G$,
is the dual canonical basis of an invariant space $\Inv_G(V(\lambda))$
positive unitriangular in the geometric Satake basis?
\end{question}

\section{Euler convolution of constructible functions}
\label{s:euler}

In this section, we switch from convolution in homology to convolution
in constructible functions.  The idea of defining convolution algebras
using constructible functions is common in geometric representation theory
(see for example \cite{Lusztig:constructible}).

More specifically, we will define a new category $\econv(\Gr)_0$ which
conjecturally is equivalent to $\rep^u_{-1}(G)_\min$, and we will prove
this conjecture for $G = \SL(2)$ and $G = \SL(3)$.  When computing invariant
vectors from webs, the construction is a state model as in \sec{s:hstate},
where the counting is done using Euler characteristic.

\subsection{Generalities on constructible functions}

If $X$ is a proper complex algebraic variety over $\C$ and $f:X \to \C$
is a constructible function, then we define the \emph{Euler characteristic
integral} (see \cite{MacPherson:chern} or \cite{Joyce:constructible})
$$\int_X f d\chi \in \C$$
by linear extension starting with the characteristic functions of closed
subvarieties.  Namely, if $f = f_Y$ is the characteristic function of a
closed subvariety $Y \subseteq X$, then we define
$$\int_X f_Y d\chi \defeq \chi(Y).$$

If $\pi:X \to Y$ is a proper morphism between algebraic varieties and $f:X
\to \C$ is a constructible function on $X$, then we define the push-forward
of $f$ under $\pi$ by integration along fibres:
$$(\pi_*f)(p) \defeq \int_{\pi^{-1}(p)} f d\chi.$$
If $\cf(X)$ denotes the vector space of constructible functions on $X$,
this pushforward is then a linear map:
$$\pi_*:\cf(X) \to \cf(Y).$$

The following result is well-known --- see for example
\cite[Prop. 1]{MacPherson:chern} and \cite[Thm. 3.8]{Joyce:constructible}.

\begin{theorem} The Euler characteristic integral push-forward of
constructible functions is a well-defined covariant functor from the
category of proper morphisms between algebraic varieties over $\C$, to
the category of complex vector spaces.
\label{th:efunctor} \end{theorem}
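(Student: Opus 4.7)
The plan is to verify: (i) that $\pi_* f$ is again constructible on $Y$, (ii) that $\pi_*$ is linear, and (iii) that $(\psi \circ \pi)_* = \psi_* \circ \pi_*$ for composable proper morphisms $X \xrightarrow{\pi} Y \xrightarrow{\psi} Z$. Linearity is automatic from the linearity of $\int (\cdot)\, d\chi$, and $(\mathrm{id}_X)_* = \mathrm{id}_{\cf(X)}$ is immediate since $\chi(\pt) = 1$. So the substantive content is well-definedness and compatibility with composition.

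For (i), I would first reduce by linearity to $f = \mathbf{1}_W$ for a closed subvariety $W \subseteq X$; replacing $\pi$ with $\pi|_W$ (still proper) further reduces matters to the claim that for any proper morphism $\pi\colon X \to Y$ of complex algebraic varieties, the function $p \mapsto \chi(\pi^{-1}(p))$ is constructible on $Y$. I would obtain this from Verdier's generic local triviality (equivalently, Thom's first isotopy lemma applied to a Whitney stratification compatible with $\pi$): there is a finite stratification of $Y$ by locally closed subvarieties $\{Y_\alpha\}$ over each of which $\pi$ is topologically locally trivial. The fibre is then topologically constant along each stratum, so $p \mapsto \chi(\pi^{-1}(p))$ is constant on $Y_\alpha$ and hence constructible globally.

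For (iii), I would again reduce by linearity to $f = \mathbf{1}_W$ and absorb $W$ into $X$. Evaluating the two sides at a point $z \in Z$ and base-changing the diagram to $z$, it suffices to establish the Fubini identity
\[
\chi(X') \;=\; \int_{Y'} \chi\bigl((\pi')^{-1}(y)\bigr)\, d\chi(y)
\]
for any proper $\pi'\colon X' \to Y'$, applied to $X' = (\psi \circ \pi)^{-1}(z)$ and $Y' = \psi^{-1}(z)$. To prove this, stratify $Y'$ as in (i) so that $\pi'$ is topologically locally trivial with typical fibre $F_\alpha$ over the stratum $Y'_\alpha$; then $(\pi')^{-1}(Y'_\alpha) \to Y'_\alpha$ is a locally trivial fibre bundle and so $\chi((\pi')^{-1}(Y'_\alpha)) = \chi(Y'_\alpha)\chi(F_\alpha)$. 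Summing over $\alpha$ using the additivity of $\chi$ across locally closed decompositions of complex algebraic varieties produces $\chi(X')$ on the left, while unpacking $\int(\cdot)\, d\chi$ against the same stratification produces the right side. The main technical input, and the only genuine obstacle, is this joint use of generic stratification for proper complex-algebraic maps together with additivity of $\chi$ across locally closed decompositions; both are classical, so the theorem follows.
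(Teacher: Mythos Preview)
Your argument is correct and is essentially the standard proof. Note, however, that the paper does not actually prove this theorem: it is stated as well-known, with references to MacPherson \cite[Prop.~1]{MacPherson:chern} and Joyce \cite[Thm.~3.8]{Joyce:constructible}, and no proof is given. Your sketch---reduction to indicator functions, generic topological local triviality of proper complex-algebraic morphisms over a stratification, and the Fubini identity via multiplicativity of $\chi$ for locally trivial bundles together with additivity of $\chi$ across locally closed decompositions of complex varieties---is exactly the argument one finds in those sources, so there is nothing to compare.
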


\subsection{Construction of the categories}

Given $G$ simple and simply-connected as before, we can define a pivotal
category $\econv(\Gr)$ in a similar fashion to $\hconv(\Gr)$, except that
we will replace homology with constructible functions throughout.

The objects of $\econv(\Gr)$ are the $\Gr(\vlambda)$, where $\vlambda$ is
a sequence of minuscule weights.  As in $\hconv(\Gr)$, the tensor product
is defined by convolution.

We define the invariant space of $\Gr(\vlambda)$ to be the vector space of
constructible functions on the Satake fibre:
$$\Inv_{\econv(\Gr)}(\Gr(\vlambda)) \defeq \cf(F(\vlambda)).$$
The hom spaces are defined in an equivalent way:
$$\Hom_{\econv(\Gr)}(\Gr(\vlambda),\Gr(\vmu))
    \defeq \cf(Z(\vlambda,\vmu)).$$
We define the convolution of two hom spaces by convolution as in
$\hconv(\Gr)$.  We could proceed exactly as in $\hconv(\Gr)$, but the
``local'' nature of constructible functions allows us a simpler definition.

Fix three minuscule sequences $\vlambda, \vmu, \vnu$.  Let $\Gamma$ be a graph
homeomorphic to a theta ($\theta$) with three arcs that are polylines of
type $\vlambda$,$\vmu$, and $\vnu$ with a common base point:
$$\Gamma \;\;=\;\; \begin{tikzpicture}[baseline]
\draw[darkred,semithick] (0,0) -- (1,1) -- (2,.5) -- (3,1) -- (3.5,0)
    -- (3,-1) -- (1,-1) -- cycle;
\draw[darkred,semithick] (0,0) -- (1,0) -- (2.5,-.5) -- (3.5,0);
\draw[darkred,semithick,midto] (1,0) -- (2.5,-.5);
\draw[darkred,semithick,midto] (1,1) -- (2,.5);
\draw[darkred,semithick,midto] (1,-1) -- (3,-1);
\fill[darkred] (0,0) circle (.07);
\path[draw=darkred,fill=white] (3.5,0) circle (.07);
\draw[anchor=north] (1.75,-1.1) node {$\vlambda$};
\draw[anchor=north] (1,-.1) node {$\vmu$};
\draw[anchor=south] (1.75,.9) node {$\vnu$};
\end{tikzpicture}$$
Then there are projections
\begin{align*}
\pi_{\vlambda,\vmu}:Q(\Gamma) &\to Z(\vlambda,\vmu) \\
\pi_{\vlambda,\vnu}:Q(\Gamma) &\to Z(\vlambda,\vnu) \\
\pi_{\vmu,\vnu}:Q(\Gamma) &\to Z(\vmu,\vnu)
\end{align*}
Given
\begin{align*}
f &\in \Hom_{\econv(\Gr)}(\Gr(\vlambda),\Gr(\vmu)) \\
g &\in \Hom_{\econv(\Gr)}(\Gr(\vmu),\Gr(\vnu)),
\end{align*}
we can define their composition by Euler characteristic integration over
configurations of the middle polyline $L(\vmu)$, and using the fact that
constructible functions pull back and multiply as well as push forward:
$$g \circ f \defeq (\pi_{\vlambda,\vnu})_*(\pi_{\vlambda,\vnu}^*(f)
\pi_{\vmu,\vnu}^*(g)).$$
It is routine to check that these structures define a pivotal category.

The hom spaces in the category $\econv(\Gr)$ are too large for our purposes.
We will restrict them by just looking at those constructible functions
generated by the constant functions on the Satake fibres corresponding to
trivalent vertices.  More precisely, define a pivotal functor
$$E:\fsp(G) \to \econv(\Gr)$$
which takes the generating vertex in
$$\Inv_{\fsp(G)}(\lambda,\mu,\nu),$$
to the identity function on $F(\lambda,\mu,\nu)$.  Again, $\lambda$, $\mu$,
and $\nu$ are all minuscule and we are assuming that there is a vertex, so
$$\Inv_G(V(\lambda, \mu, \nu)) \ne 0.$$
Let $\econv(\Gr)_0$ denote the image of the functor $E$; it has
the same objects as $\econv(\Gr)$, but smaller hom spaces.

\subsection{Equivalence with the representation category}

Before stating the main conjecture and result, we can describe more
explicitly how the functor $E$ expresses an Euler characteristic state model.
The following result can be seen by chasing through the definitions.

\begin{proposition} Given a web $w \in \fsp(G)$ with boundary $\vlambda$
and dual diskoid $D$, $E(w)$ is the function on the Satake fibre
$F(\vlambda)$ whose value at $p \in F(\vlambda)$ is $\chi(\pi^{-1}(p))$.
(Here $\pi:Q(D) \to F(\vlambda)$ is the map which restricts a diskoid
configuration to its boundary.)
\end{proposition}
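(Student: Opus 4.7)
The plan is to prove this by induction on the structure of $w$, using a Morse decomposition along the lines of the proof of \thm{th:homclass2}. Pick a generic height function on $w$ so that between consecutive horizontal lines $\ell_{k-1}, \ell_k$ one sees only a single elementary piece $w_k$: an ascending Y, a descending Y, a cup, or a cap. The web is then a composition
$$w = w_m \circ \cdots \circ w_1$$
of morphisms in $\fsp(G)$, and the dual diskoid $D$ of $w$ is assembled from the dual diskoids $D_k$ of these pieces by gluing along the polylines $L(\vlambda^{(k)})$ that sit on the horizontal lines. This realizes $Q(D)$ as the iterated fibred product
$$Q(D) = Q(D_1) \times_{Q(L(\vlambda^{(1)}))} Q(D_2) \times_{Q(L(\vlambda^{(2)}))}
    \cdots \times_{Q(L(\vlambda^{(m-1)}))} Q(D_m).$$

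The base case is the claim that for a single trivalent vertex $v$ with minuscule edges $\lambda, \mu, \nu$, the function $E(v)$ evaluated at $p \in F(\lambda,\mu,\nu)$ equals $\chi(\pi^{-1}(p))$. But $D(v)$ is a single triangle with every vertex on the boundary polygon, so $Q(D(v)) = F(\lambda,\mu,\nu)$ and $\pi$ is the identity; hence $\chi(\pi^{-1}(p)) = 1$, matching the definition of $E$ on a generating vertex. For cups and caps one checks similarly (the relevant $Q(D_k)$ degenerates to an edge-configuration space, and $\pi$ is an isomorphism onto its image in the intermediate polyline configuration space), and for identity strands the claim is immediate.

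For the inductive step, let $w' = w_m \circ \cdots \circ w_2$ with dual diskoid $D'$ glued from $D_2, \ldots, D_m$. By induction $E(w')$ equals the function $p \mapsto \chi((\pi')^{-1}(p))$ on $F(\vlambda^{(1)})$, where $\pi': Q(D') \to F(\vlambda^{(1)})$ is the boundary restriction. The definition of composition in $\econv(\Gr)$ (via the theta graph that integrates out the middle polyline of type $\vlambda^{(1)}$), combined with the multiplicativity of Euler characteristic along the fibred product
$$\pi^{-1}(p) = (\pi_1)^{-1}(p) \times_{Q(L(\vlambda^{(1)}))} Q(D'),$$
then gives
$$E(w)(p) = E(w') \circ E(w_1)(p)
    = \int_{Q(L(\vlambda^{(1)}))} \chi\bigl((\pi_1)^{-1}_q(p)\bigr)\,
      \chi\bigl((\pi')^{-1}(q)\bigr)\, d\chi(q)
    = \chi(\pi^{-1}(p)),$$
where the last equality uses \thm{th:efunctor} together with the multiplicativity of $\chi$ over constructible fibrations. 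Tensoring with the identity on a sequence $\vlambda$ is handled by a parallel calculation in which the extra polyline contributes a fibration whose fibre at any point is a single point of $Q(L(\vlambda))_\Delta$.

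The main obstacle is purely bookkeeping: one must check carefully that the ``theta-graph'' composition rule in $\econv(\Gr)$ agrees with gluing dual diskoids along their shared boundary polyline, including the placement of base points and the handling of the trivial cup/cap pieces, so that the iterated Euler integral collapses to a single integral over the fibre $\pi^{-1}(p) \subseteq Q(D)$. Once this identification is in place, the multiplicativity of Euler characteristic over fibred products and the functoriality of push-forward do all of the real work.
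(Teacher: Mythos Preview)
The paper does not give a proof of this proposition at all; it simply asserts that ``the following result can be seen by chasing through the definitions.'' Your Morse--decomposition argument is a perfectly reasonable way to carry out that chase, and the core idea---that the fibred--product decomposition of $Q(D)$ matches the iterated theta--graph convolution defining $E(w)$, with Euler characteristic multiplicative along the way via \thm{th:efunctor}---is exactly right.

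There is one genuine bookkeeping gap you should fix. Your inductive hypothesis is misstated: $w' = w_m \circ \cdots \circ w_2$ is a \emph{morphism} from $\vlambda^{(1)}$ to $\vlambda$, so $E(w')$ is a constructible function on $Z(\vlambda^{(1)}, \vlambda)$, not on $F(\vlambda^{(1)})$ as you wrote. The proposition as stated concerns only invariants (the case $\vlambda^{(0)} = \emptyset$), so you cannot literally invoke it ``by induction'' on $w'$. What you actually need to induct on is the stronger claim: for any web morphism $u : \vmu \to \vnu$ with dual region $D_u$, the function $E(u) \in \cf(Z(\vmu,\vnu))$ satisfies $E(u)(z) = \chi(\pi_u^{-1}(z))$, where $\pi_u : Q(D_u) \to Z(\vmu,\vnu)$ restricts a configuration to the two boundary polylines. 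Your base cases (single vertex, cup, cap, identity strand) already verify this stronger claim, and your inductive step then goes through verbatim with $Z(\vlambda^{(1)},\vlambda)$ in place of $F(\vlambda^{(1)})$. The original proposition is recovered as the special case $\vmu = \emptyset$, where $Z(\emptyset,\vlambda) = F(\vlambda)$.
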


So we are indeed producing a function which counts (using Euler
characteristic) ways to extend the boundary configuration to a diskoid
configuration.

We are now ready to formulate our alternate version of the geometric
Satake correspondence.

\begin{conjecture} There is an equivalence of pivotal categories:
$$\econv(\Gr(\Gv))_0 \cong \rep^u_{-1}(G)_\min.$$
\label{c:eulerrep} \end{conjecture}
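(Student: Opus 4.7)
The plan is to prove the conjecture for $G = \SL(3)$ (the $G = \SL(2)$ case is handled by the same argument with a single relation and is strictly simpler). My strategy is to show that the pivotal functor $E: \fsp(\SL(3)) \to \econv(\Gr)_0$ factors through the presented spider $\spd_{-1}(\SL(3))$ at $q = -1$, and that the induced functor is an equivalence. For the minuscule part of $\SL(3)$ this proves the conjecture directly, since the two minuscule weights $\omega_1, \omega_2$ are dual rather than self-dual so the unimodality subtleties from \sec{s:signs} do not arise and $\rep^u_{-1}(\SL(3))_\min$ coincides with the minuscule part of $\spd_{-1}(\SL(3))$.

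\textbf{Stage 1: spider relations at $q=-1$.} Since $\spd_{-1}(\SL(3))$ is presented by the three relations \eqref{e:a2spider} with $q$ specialized to $-1$, it suffices to verify each identity in $\econv(\Gr)_0$. The triangle loop reduces to $\chi(\Gr(\omega_1)) = \chi(\P^2) = 3 = q^2 + 1 + q^{-2}|_{q=-1}$. The bigon reduces to the Euler characteristic of the fibre of a two-triangle dual diskoid, which equals $2 = -q - q^{-1}|_{q=-1}$ by \lem{l:smooth} applied to the relevant $\P^1$. The square relation is the substantive content: one must show that the constructible function produced by the square diskoid equals the sum of the functions produced by the two cap-cup resolutions of its four boundary edges. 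I plan to handle this by stratifying the target Satake fibre according to how the central $2$-cell of the square diskoid collapses, applying \lem{l:smooth} on each stratum, and matching the resulting stratified Euler characteristics against the two right-hand-side terms.

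\textbf{Stage 2: faithfulness via unitriangularity.} By construction, $\econv(\Gr)_0$ is the essential image of $E$, so once Stage 1 is complete the induced functor $\spd_{-1}(\SL(3)) \to \econv(\Gr)_0$ is automatically essentially surjective and full; only faithfulness remains. By \thm{th:sl3basis}, the non-elliptic webs $\{w(\vlambda,\vmu)\}_{\vmu}$ form a basis of $\Inv(V(\vlambda))$, so it suffices to prove that their images in $\cf(F(\vlambda))$ are linearly independent. I do this by mimicking the proof of \thm{th:unitriangweak}: the matrix $M_{\vmu,\vmu'} \defeq E(w(\vlambda,\vmu))(p_{\vmu'})$, where $p_{\vmu'}$ is a generic point of the Satake component $\bar{Q(A(\vlambda,\vmu'))}$, is unitriangular in the partial order $\leq$ on minuscule paths. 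The off-diagonal vanishing for $\vmu' \not\leq \vmu$ is a support argument identical to \lem{l:coeff2}: if $\vmu' \not\leq \vmu$, then $Q(A(\vlambda,\vmu'))$ is disjoint from $\pi(Q(D(\vlambda,\vmu)))$. The diagonal entries equal $1$ by \thm{th:extend} together with the scheme-theoretic refinement in its proof: the fibre of $\pi$ over a generic point of $\bar{Q(A(\vlambda,\vmu))}$ is a single reduced point, whose Euler characteristic is $1$.

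\textbf{Main obstacle.} The hardest step will be the square relation in Stage 1, since unlike the loop and bigon it requires equality of two constructible functions on a positive-dimensional base rather than an identity of numbers. The key technical input I anticipate using is that for minuscule $\SL(3)$ all iterated fibre products in sight are smooth projective (by \thm{th:smooth}), so their Euler characteristics can be computed additively from the combinatorics of how the central face collapses. A simplifying observation is that for minuscule $\SL(3)$ the sign $(-1)^{\braket{2\lambda,\rhov}}$ appearing in the homological loop calculation of \thm{th:convperv} is always trivial because $\braket{2\omega_i,\rhov} = 2$ is even for $i = 1, 2$; consequently the homological and Euler conventions agree on loops without any sign correction, which removes the potential bookkeeping of matching $\rep^u$ and $\rep^u_{-1}$ structures in this setting.
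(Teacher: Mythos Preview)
Your proposal is correct and follows essentially the same route as the paper: verify the three $A_2$ spider relations at $q=-1$ for the functor $E$ (loop $\mapsto \chi(\P^2)=3$, bigon $\mapsto \chi(\P^1)=2$, and the square relation by a case analysis), then deduce faithfulness from unitriangularity. The paper's square computation is phrased slightly differently---it stratifies the boundary quadrilateral configuration space $Q(P(\omega_1,\omega_2,\omega_1,\omega_2))$ into its two components (the two collapses onto pairs of edges) and their intersection (collapse onto a single edge, where the fibre over the free central vertex is a $\P^1$)---and for Stage~2 the paper offers, as an alternative to unitriangularity, the observation that $\rep^u_{-1}(\SL(3))_\min$ is simple as a linear pivotal category because the duality pairing on invariant spaces is nondegenerate.
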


Recall from Sections~\ref{s:signs} and \ref{s:examples} that
$\rep^u_{-1}(G)_\min$ and $\rep^u(G)_\min$ have the same information except
for a sign correction.  We offer the following corollary of \conj{c:eulerrep}
as a stand-along conjecture.

\begin{conjecture}  Let $w$ be any closed web with dual diskoid $D$.  Then
$$\Psi(w) = \pm \chi(Q(D)).$$
\label{c:eulerclosed} \end{conjecture}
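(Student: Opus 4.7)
The plan is to invoke the functor $E:\fsp(G)\to\econv(\Gr)_0$ defined in \sec{s:euler} together with the proposition immediately preceding \conj{c:eulerrep}. For a closed web $w$ with dual diskoid $D$, we have $F(\emptyset) = \{t^0\}$, so $E(w) \in \cf(\{t^0\}) = \C$ is precisely the scalar $\chi(Q(D))$. Thus \conj{c:eulerclosed} reduces to showing $E(w) = \pm\Psi(w)$ for closed $w$; this is a strictly weaker statement than \conj{c:eulerrep}, since it asserts only an equality of scalars rather than a full equivalence of categories.

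I will prove this by induction on the number of faces of $w$. The base case is a single minuscule loop, for which $Q(D) \cong \Gr(\lambda)$; a Bruhat cell decomposition gives $\chi(\Gr(\lambda)) = \dim V(\lambda)$, and this matches $\Psi(\bigcirc_\lambda)$ up to the sign $(-1)^{\braket{2\lambda,\rhov}}$ that already appeared in the pivotal-trace computation at the end of the proof of \thm{th:convperv}. For the inductive step, I will use the combinatorial fact that every closed type-$A_1$ or type-$A_2$ web in the disk contains an elliptic face, which follows from Euler's formula applied to a trivalent planar graph with empty boundary. Hence $w$ can always be simplified via the spider relations \eqref{e:tl} or \eqref{e:a2spider}, and it suffices to show that $w \mapsto \chi(Q(D(w)))$ transforms under each elliptic reduction by the corresponding spider coefficient evaluated at $q=-1$.

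The loop-removal relation follows from the observation that an isolated loop labelled $\omega_i$ adds a disjoint line-segment bubble to $D$, so $Q(D)\cong Q(D')\times\Gr(\omega_i)$, and multiplicativity of $\chi$ gives the factor $\chi(\Gr(\omega_i))=3=(q^2+1+q^{-2})|_{q=-1}$. The bigon relation reduces to a local fibration computation: after stratifying the bigon configuration space over the configuration space of its replacement edge, one finds the scalar $2=(-q-q^{-1})|_{q=-1}$, using multiplicativity of $\chi$ on the fibres and additivity on the strata.

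The main obstacle is the square relation, which requires proving the Euler-characteristic identity
\[
\chi(Q(D_{\text{sq}})) = \chi(Q(D_{I_1})) + \chi(Q(D_{I_2})),
\]
where $D_{I_1},D_{I_2}$ are the two I-beam diskoids corresponding to the right-hand side of \eqref{e:a2spider}. I will approach this by stratifying $Q(D_{\text{sq}})$ according to the coincidence and collinearity patterns of its interior-vertex configurations in $\mathbb{CP}^2$ — a local version of the projective-geometric analysis carried out in the proof of \thm{th:notdual} — and identifying each stratum as contributing to exactly one of the two I-beams. Once these three local identities are established, any closed web reduces by the spider relations to the empty web with $\chi(Q(\emptyset)) = \Psi(\emptyset) = 1$, completing the induction with overall sign determined by the loops removed along the way.
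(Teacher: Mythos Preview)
First, note that \conj{c:eulerclosed} is stated for an arbitrary simple simply-connected $G$, and the paper does not prove it in that generality; it is offered as a corollary of \conj{c:eulerrep}, which is then established only for $\SL(2)$ and $\SL(3)$ in the theorem immediately following. Your proposal likewise only covers $\SL(2)$ and $\SL(3)$, since your inductive step relies on every closed web containing an elliptic face and on a known finite list of skein relations. So you are proving the same special cases the paper proves, not the general conjecture; you should say so explicitly.

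Within those cases, your strategy --- verify each spider relation at $q=-1$ for the Euler-characteristic invariant and then reduce any closed web to the empty one --- is essentially the paper's strategy. The paper phrases it categorically (check that the pivotal functor $E$ kills the defining relations of $\spd_{-1}$, hence factors through it), while you phrase it as a direct induction on closed webs; these are equivalent once the local checks are done correctly.

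There is, however, a genuine gap in your formulation of the square step. The displayed scalar identity
\[
\chi(Q(D_{\text{sq}})) = \chi(Q(D_{I_1})) + \chi(Q(D_{I_2}))
\]
is \emph{not} what the induction requires. If $w$ is a closed web containing a square and $w_1,w_2$ are the two reductions, then $Q(D(w))$ and the $Q(D(w_i))$ all map to the configuration space of the complement of the square region, and what you need is that the \emph{fibrewise} Euler characteristics agree --- equivalently, that the pushforward constructible functions on $F(\omega_1,\omega_2,\omega_1,\omega_2)$ satisfy $\pi_{\text{sq},*}(1) = \pi_{I_1,*}(1) + \pi_{I_2,*}(1)$. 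A single scalar equality of total Euler characteristics of the local pieces does not let you integrate against the rest of the diskoid. The paper does precisely this fibrewise check: it observes that $F(\omega_1,\omega_2,\omega_1,\omega_2)$ has two components meeting along a smaller locus, and verifies the identity on each component (fibre a point on each side) and on the intersection (fibre $\P^1$ on the left, contributing $2$, matching $1+1$ on the right). Your stratification idea can be made to do this, but you must state and check it pointwise over the boundary Satake fibre, not as a global scalar. Once that is fixed, your loop and bigon arguments already have the right fibration form, and the induction goes through.
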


Here $\Psi(w)$ denotes the value of $w$ in the pivotal category
$\rep^u(G)_\min$ and the sign comes as a result of the sign correction
between $\rep^u_{-1}(G)_\min$ and $\rep^u(G)_\min$.

\begin{theorem} \conj{c:eulerrep} holds when $G = \SL(2)$ and $G = \SL(3)$.
\end{theorem}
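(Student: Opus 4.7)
The plan is to exploit the explicit spider presentations of $\rep^u_{-1}(G)_\min$ afforded by \thm{th:kauffman} for $G = \SL(2)$ and the Kuperberg theorem for $G = \SL(3)$. These say that $\rep^u_{-1}(G)_\min \cong \spd_{-1}(G)$, the quotient of $\fsp(G)$ by the explicit spider relations evaluated at $q = -1$. I thus proceed in two steps: first show that the functor $E:\fsp(G) \to \econv(\Gr)$ respects these relations, giving a pivotal functor $\tilde E:\rep^u_{-1}(G)_\min \to \econv(\Gr)_0$; then show that $\tilde E$ is an equivalence.

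For the first step, I must verify the spider relations at $q = -1$ as identities between Euler-characteristic constructible functions on the relevant Satake fibres. For $\SL(2)$, there is only the loop relation $\bigcirc = 2$; the dual diskoid of a circle is a single $\omega_1$-edge, with configuration space $\Gr(\omega_1) \cong \P^1$, which has Euler characteristic $2$. For $\SL(3)$, the three relations at $q = -1$ state $\bigcirc = 3$, $\mathrm{bigon} = 2 \cdot \mathrm{edge}$, and $\mathrm{square} = H_1 + H_2$. The first follows from $\chi(\P^2) = 3$. For the bigon, representing $\Gr(\PGL(3))$ as lattices in $\cK^3$, a direct computation shows that for each $p \in F(\omega_1, \omega_2)$ the fibre of $\pi$ over $p$ is the projective line parameterizing planes in $\C^3$ containing a fixed line, hence has Euler characteristic $2$. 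The square relation requires an analogous fibre-by-fibre analysis over $F(\omega_1, \omega_2, \omega_1, \omega_2)$.

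For the second step, $\tilde E$ is essentially surjective by the matching of objects on both sides, and full by the definition of $\econv(\Gr)_0$ as the image of $E$ on morphisms. Faithfulness uses that the non-elliptic webs form a basis of the invariant spaces of $\rep^u_{-1}(G)_\min$ by \thm{th:sl3basis} (together with the Temperley-Lieb basis in the $\SL(2)$ case). I claim that the constructible functions $\{E(w(\vlambda,\vmu))\}$, indexed by minuscule paths $\vmu$ of type $\vlambda$, are linearly independent in $\cf(F(\vlambda))$. By \thm{th:extend}, the fibre of $\pi:Q(D(\vlambda,\vmu)) \to F(\vlambda)$ over any point in the global-isometry locus $U_\vmu$ of $\bar{Q(A(\vlambda,\vmu))}$ is a single point, so $E(w(\vlambda,\vmu))$ equals $1$ at a generic point of $U_\vmu$. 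On the other hand, by the argument of \lem{l:coeff2}, the support of $E(w(\vlambda,\vmu))$ is contained in $\bigcup_{\vnu \le \vmu} \bar{Q(A(\vlambda,\vnu))}$, so $E(w(\vlambda,\vmu))$ vanishes at generic points of $U_\vnu$ when $\vnu \not\le \vmu$. Evaluating at such generic points yields a unitriangular matrix, proving the claim and hence faithfulness of $\tilde E$.

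The main obstacle will be the verification of the spider relations in the first step, and especially the square relation for $\SL(3)$. This is a pointwise identity between Euler characteristics of fibres of diskoid configuration maps in the affine building for $\PGL(3)$: the fibre over each boundary configuration of the square's diskoid must have Euler characteristic equal to the sum of the corresponding fibre Euler characteristics from the two $H$-web diskoids. Making this precise requires a stratification of $F(\omega_1, \omega_2, \omega_1, \omega_2)$ by building-theoretic invariants of the boundary, together with explicit Euler-characteristic computations of triangle-completion subvarieties in each stratum; the possible cancellations and degenerations over non-generic strata are what make this nontrivial.
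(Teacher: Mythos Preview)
Your proposal is correct and follows essentially the same route as the paper: verify the spider relations at $q=-1$ as identities of constructible functions, then deduce equivalence. Your faithfulness argument via unitriangularity is exactly one of the two alternatives the paper offers (the other being simplicity of $\rep^u_{-1}(\SL(3))_\min$ from nondegeneracy of the invariant pairing).

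One remark on the square relation, which you flag as the main obstacle: the verification is lighter than the stratification you anticipate. The Satake fibre $F(\omega_1,\omega_2,\omega_1,\omega_2)$ has exactly two components, corresponding to the two ways of collapsing the quadrilateral onto a pair of opposite edges; over a generic point of either component the diskoid extension is unique, matching the contribution of exactly one of the two terms on the right. The only remaining stratum is the intersection of the two components, where the quadrilateral degenerates to a single edge; there the fibre is a $\P^1$ of triangles extending that edge, giving Euler characteristic $2$, which again matches the sum on the right. So no finer building-theoretic stratification is needed.
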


\begin{proof} We will first argue the more difficult case $G = \SL(3)$.
We argue by checking the skein relations of $\spd_{-1}(\SL(3))$.  The first
two skein relations,
\begin{align*}
\begin{tikzpicture}[baseline=-.5ex,web,allow upside down]
\draw[midto,rotate=180] (0,0) circle (.4);
\end{tikzpicture}\;\; &= \;\; 3 \\
\begin{tikzpicture}[baseline=-.5ex,web]
\draw[midto] (-1.25,0) -- (-.5,0);
\draw[midto] (.5,0) -- (1.25,0);
\draw (.5,0) arc (30:150:.577); \draw[midto] (0,.288) -- (-.01,.288);
\draw (.5,0) arc (330:210:.577); \draw[midto] (0,-.288) -- (-.01,-.288);
\end{tikzpicture}\;\; &= \;\; 2 \;\;
\begin{tikzpicture}[baseline=-.5ex,web]
\draw[midto] (-.5,0) -- (.5,0);
\end{tikzpicture}
\end{align*}
are straightforward, because the relevant fibres are always $\P^2$ and
$\P^1$, respectively.  The third skein relation,
$$\begin{tikzpicture}[baseline=-.5ex,web]
\draw (.35,.35) arc (60:120:.7) arc (150:210:.7)
    arc (240:300:.7) arc (-30:30:.7);
\draw[midfrom] (0,.444) -- (.01,.444);
\draw[midfrom] (-.444,0) -- (-.444,-.01);
\draw[midfrom] (0,-.444) -- (-.01,-.444);
\draw[midfrom] (.444,0) -- (.444,.01);
\draw[midto] (.35,.35) -- (.707,.707);
\draw[midfrom] (-.35,.35) -- (-.707,.707);
\draw[midto] (-.35,-.35) -- (-.707,-.707);
\draw[midfrom] (.35,-.35) -- (.707,-.707);
\end{tikzpicture}\;\; =\;\;
\begin{tikzpicture}[baseline=-.5ex,web]
\draw (.5,.5) arc (315:225:.707);
\draw[midto] (0,.293) -- (.01,.293);
\draw (-.5,-.5) arc (135:45:.707);
\draw[midto] (0,-.293) -- (-.01,-.293);
\end{tikzpicture}\;\; + \;\
\begin{tikzpicture}[baseline=-.5ex,web]
\draw (.5,-.5) arc (225:135:.707);
\draw[midto] (.293,0) -- (.293,.01);
\draw (-.5,.5) arc (45:-45:.707);
\draw[midto] (-.293,0) -- (-.293,-.01);
\end{tikzpicture},$$
is a little bit more work. The diskoid dual to the left side consists
of four triangles.  The configuration space of the quadrilateral
$P(\omega_1,\omega_2,\omega_1,\omega_2)$ has two components, corresponding
to the two ways to collapse the quadrilateral to two edges.  In each case,
there is a unique extension to the diskoid in which the diskoid collapses to
two triangles.  The remaining case that should be checked is the intersection
of the two components in which the quadrilateral collapses to a single edge.
In this case the fibre is $\P^1$, because there is a $\P^1$ of ways to
extend the edge to a triangle, and the diskoid can collapse onto this
triangle.  Thus the local Euler characteristic at the intersection is 2,
which matches the sum on the right side of the skein relation.

Thus, the image of $E$ is either equivalent to $\rep^u_{-1}(\SL(3))_\min$,
or it is a quotient.  However, $\rep^u_{-1}(\SL(3))_\min$ is simple
as a linear-additive, pivotal category, because the pairing of dual
invariant spaces is non-degenerate.  (Or, \thm{th:unitriang} also
implies that basis webs are linearly independent after applying $E$
because of unitriangularity.)  Therefore the image of $E$ is equivalent
to $\rep^u_{-1}(\SL(3))_\min$ itself.

In the case $G = \SL(2)$, we only need to check the skein relation
\eqref{e:tl} with $q=-1$:
$$\begin{tikzpicture}[baseline=-.5ex,web]
\draw (0,0) circle (.4);
\end{tikzpicture} \;\;= \;\;2.$$
In this case the diskoid of the left side is a based edge, the diskoid of
the right side is a point, the fibre is $\P^1$, and its Euler characteristic
is 2 as desired.
\end{proof}

It should also be possible to prove \conj{c:eulerrep} when $G = \SL(m)$.
The idea is to use the geometric skew Howe duality of Mirkovi\'c and
Vybornov \cite{MV:bdg} and the ideas in \cite[Sec. 6]{Kamnitzer:lectures}
to express this conjecture in terms of constructible functions on quiver
varieties for the Howe dual $\SL(n)$.  Then we are in a position to apply
Nakajima's work from \cite[Sec. 10]{Nakajima:kacmoody}.  Note that this
approach does not make use of the geometric Satake correspondence.

\subsection{Relationship with homological convolutions}

A constructible function is constant on a dense open subset of any
irreducible variety.  If $X$ is an irreducible variety, we write
$f(X)$ for the value of $f$ on this dense open subset.

We can define a non-functor $\Xi$ from $\econv(\Gr)$ to $\hconv(\Gr)$
as follows.  On objects, $\Xi$ is the identity, while on morphisms we define
$$\Xi:\cf(Z(\vlambda, \vmu)) \longto H_\top(Z(\vlambda, \vmu))$$
by the formula
$$\Xi:f \mapsto \sum_{X \in \Irr(Z(\vlambda, \vmu))} f(X)[X].$$
The map $\Xi$ is not a functor because it does not respect convolution
(as some simple examples show). However, we offer the following tentative
conjecture.

\begin{conjecture} The map $\Xi$ between hom spaces restricts to an
equivalence of pivotal categories from $\econv(\Gr)_0$ to $\hconv(\Gr)$
up to a sign correction of the tensor and pivotal structures.
\label{c:eulerhom} \end{conjecture}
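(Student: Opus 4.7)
The plan is to combine the two identifications of these convolution categories with the representation category of $G$: \thm{th:convperv} gives $\hconv(\Gr) \cong \rep^u(G)_\min$, while \conj{c:eulerrep} gives $\econv(\Gr)_0 \cong \rep^u_{-1}(G)_\min$. By \sec{s:signs}, these two versions of the representation category carry the same algebraic information but are related by the explicit sign rule
$$s(V(\vlambda),V(\vgamma);V(\vmu),V(\vnu)) = (-1)^{\braket{2\lambda,\rhov}\braket{\mu-\nu,\rhov}}.$$
Composing the two equivalences therefore produces an abstract equivalence of pivotal categories $\Phi\colon \econv(\Gr)_0 \to \hconv(\Gr)$ up to precisely the sign correction appearing in the conjecture.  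The content of the conjecture is then that this abstract $\Phi$ is realized concretely by $\Xi$.

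To identify $\Phi$ with $\Xi$, the natural strategy is to check agreement on generators.  Both categories are generated, as pivotal categories, by the trivalent vertices attached to minuscule triples $(\lambda,\mu,\nu)$ with $\Inv_G(V(\lambda,\mu,\nu))\ne 0$, together with the cups and caps.  In $\econv(\Gr)_0$, the generating vertex at such a triple is by definition the constant function $1$ on $F(\lambda,\mu,\nu)$; by \thm{th:haines} together with \thm{th:smooth}, $F(\lambda,\mu,\nu)$ is irreducible, top-dimensional, and smooth, so $\Xi$ sends this function to $[F(\lambda,\mu,\nu)]$, which is precisely the distinguished generator in $\hconv(\Gr)$.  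The same comparison handles the cups $b_\lambda$ and caps $d_\lambda$, since $Z(\lambda\concat\lambda^*,\emptyset)\cong\Gr(\lambda)$ is irreducible.  Thus $\Xi$ agrees with $\Phi$ on all generators.

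The main obstacle is to show that the restriction of $\Xi$ to $\econv(\Gr)_0$ is actually functorial, since $\Xi$ manifestly is not a functor on all of $\econv(\Gr)$.  Equivalently, one must verify that for every web $w$ with boundary $\vlambda$ and dual diskoid $D$, the class
$$\Xi(E(w)) = \sum_X \chi(\pi^{-1}(p_X))\,[X]$$
(summed over top-dimensional components $X$ of $F(\vlambda)$ with $p_X\in X$ generic) agrees, up to the signs above, with the homological Satake image $\pi_*(c(w))$ from \thm{th:homclass2}.  Component by component this reduces to comparing the intersection-theoretic multiplicity of $X$ in $\pi_*(c(w))$ with the Euler characteristic of a generic fibre of $\pi$ over $X$.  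For the proper semismall convolution morphism underlying $\hconv(\Gr)$, such a comparison should follow from the decomposition theorem (as in the proof of \thm{th:convperv}), but the genuinely difficult step is to control it uniformly across iterated convolutions of triangle configuration spaces and to verify that the accumulated signs match the parity $\braket{2\lambda,\rhov}$ dictated by the sign rule.  A possible intermediate target, analogous to \conj{c:eulerclosed}, is to first establish the identity $\Xi(E(w)) = \pm \pi_*(c(w))$ for all closed webs $w$, where the Satake fibre is a point and the needed comparison reduces to a single Euler-characteristic computation.
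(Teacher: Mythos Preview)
The paper does not prove this statement: it is explicitly introduced as a ``tentative conjecture'' and no proof is given.  The only evidence offered is that \conj{c:eulerhom} would imply \conj{c:eulerrep} and the displayed expansion formula, together with the remark that an analogous statement has been conjectured in the quiver-variety setting.  So there is no ``paper's own proof'' to compare against.

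As for your proposal itself, note first that you take \conj{c:eulerrep} as an input, but in the paper that is also a conjecture, established only for $\SL(2)$ and $\SL(3)$; moreover the paper derives \conj{c:eulerrep} \emph{from} \conj{c:eulerhom}, so at best your argument would show the two conjectures are equivalent, not that either one holds.  More substantively, the step ``$\Xi$ agrees with $\Phi$ on generators, therefore $\Xi=\Phi$'' is exactly the gap you yourself flag: it presupposes that $\Xi$ restricted to $\econv(\Gr)_0$ respects composition, which is the whole content of the conjecture and which you do not establish.  Your reduction to comparing intersection multiplicities with generic-fibre Euler characteristics is the natural formulation of what must be checked, but invoking the decomposition theorem for a single semismall map does not address the issue, since the difficulty is precisely that the Euler-characteristic convolution and the homological convolution are built from different pushforward operations and need not agree under iteration.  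In short, your outline is a reasonable description of what a proof would have to accomplish, but it does not close the main gap, and the paper does not claim to either.
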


This conjecture implies \conj{c:eulerrep} one because this conjectured
equivalence is compatible with the functors from $\fsp(G)$.  This conjecture
would also imply the following simple formula for the expansion of the
invariant vectors coming from webs in the Satake basis which generalizes
\conj{c:eulerclosed}.

\begin{conjecture}[Corollary of \conj{c:eulerhom}]
Let $w$ be a minuscule web with boundary $\vlambda$ and dual diskoid
$D$.  Then we can expand $\Psi(w)$ in the Satake basis as
$$\Psi(w) = \pm \sum_{X \in \Irr(F(\vlambda))} \chi(\pi^{-1}(x))[X],$$
where $x$ is a generic point of each $X$, and $\pi:Q(D) \to F(\vlambda)$
is the restriction map from a diskoid configuration to its boundary.
\end{conjecture}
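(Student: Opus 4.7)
The plan is to derive the formula by chasing a web $w$ through the composition of functors $\fsp(G) \stackrel{E}{\longto} \econv(\Gr)_0 \stackrel{\Xi}{\longto} \hconv(\Gr) \cong \rep^u(G)_\min$, using \conj{c:eulerhom} to make the composition meaningful (and not merely a composition of maps). The geometric Satake identification $H_\top(F(\vlambda)) \cong \Inv(V(\vlambda))$ of \thm{th:satbasis} then transports the answer into the invariant space, where the Satake basis is precisely the collection of fundamental classes $\{[X]\}$ indexed by $X \in \Irr(F(\vlambda))$.

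The first step is to identify $E(w)$. By the proposition stated immediately before \conj{c:eulerrep}, $E(w)$ is the constructible function on $F(\vlambda)$ whose value at a point $p$ is $\chi(\pi^{-1}(p))$, where $\pi : Q(D) \to F(\vlambda)$ is the boundary restriction map. The second step is to apply $\Xi$ to this function. By the definition of $\Xi$, together with the observation that a constructible function is constant on a dense open subset of each component $X \in \Irr(F(\vlambda))$, we obtain
$$\Xi(E(w)) = \sum_{X \in \Irr(F(\vlambda))} \chi(\pi^{-1}(x))\,[X] \in H_\top(F(\vlambda)),$$
where $x$ is a generic point of the component $X$. This is the right-hand side of the formula to be proved.

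The third step is to identify $\Xi(E(w))$ with $\Psi(w)$, up to the advertised sign. By \conj{c:eulerhom}, $\Xi$ restricts to an equivalence of pivotal categories from $\econv(\Gr)_0$ to $\hconv(\Gr)$, up to a sign correction of the tensor and pivotal structures; moreover, this equivalence is compatible with the free-spider functors (this is how \conj{c:eulerhom} implies \conj{c:eulerrep}). Consequently $\Xi \circ E$ agrees, up to signs, with the composition $\fsp(G) \to \hconv(\Gr)$ that sends $w$ to $\Psi(w)$ via the geometric Satake correspondence of \thm{th:convperv}. Matching these two identifications of the image of $w$ yields the desired equality $\Psi(w) = \pm \sum_{X} \chi(\pi^{-1}(x))[X]$.

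The main obstacle in this proposal is of course entirely inherited from \conj{c:eulerhom}: one must actually establish that $\Xi$, which is manifestly not a functor on all of $\econv(\Gr)$, becomes a functor (and indeed an equivalence) after restriction to the image $\econv(\Gr)_0$ of $E$, and one must pin down the sign discrepancy between the tensor and pivotal structures on the two sides. Once this is granted, the proof above is essentially a diagram chase. A natural refinement of the plan is to avoid invoking \conj{c:eulerhom} wholesale by instead proving directly that, for each web $w$, the homological class $c(w) \in H_{2d(\vlambda)}(Q(D))$ of \thm{th:homclass2} pushes forward under $\pi$ to a sum of component classes with the advertised Euler characteristic multiplicities; this reduces to comparing the scheme-theoretic fibres of $\pi$ over generic points of each component of $F(\vlambda)$ with the corresponding convolution intersection, paralleling the argument used in the proof of \lem{l:coeff1}.
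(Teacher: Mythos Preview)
Your proposal is correct and matches the paper's approach exactly: the paper does not spell out a proof at all, but simply labels the statement as a corollary of \conj{c:eulerhom} and remarks that the conjectured equivalence is compatible with the functors from $\fsp(G)$. Your three-step diagram chase (compute $E(w)$ via the proposition preceding \conj{c:eulerrep}, apply the definition of $\Xi$, then invoke \conj{c:eulerhom} together with the compatibility on trivalent generators to identify $\Xi\circ E$ with $\Psi$ up to sign) is precisely the intended derivation, made explicit.
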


As partial evidence for \conj{c:eulerhom}, we note that a similar result
has been conjectured in the quiver variety setting.

\section{Future work}

This article is hopefully only the beginning of an investigation into
configuration spaces of diskoids and their relations to presented pivotal
categories, or spiders.

\subsection{Basis webs for $\SL(n)$}

In future work, the first author will establish the following
generalization of \thm{th:sl3basis} and \thm{th:unitriang} to $\SL(n)$.

\begin{theorem} Given a sequence of minuscule weights $\vlambda$ of $\SL(n)$,
there is a map $w(\vmu)$ from minuscule path $\vmu$ of type $\vlambda$
to webs. The image of these webs in $\Inv(V(\vlambda))$ are a basis,
and the change of basis to the Satake basis is upper unitriangular with
respect to the partial order on minuscule paths.
\label{th:westbury} \end{theorem}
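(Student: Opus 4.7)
The plan is to follow the same template used for $\SL(3)$ in \sec{s:sl3}, replacing each rank-2 ingredient with its higher-rank counterpart. First I would construct the map $\vmu \mapsto w(\vmu)$ by building a triangulated fan diskoid $D(\vlambda,\vmu)$ directly from the minuscule path: place the base vertex at the origin, place the $i$-th boundary vertex at abstract distance $\mu_i$, and fill in the $i$-th ``slice'' $T_i$ (the triangle with sides $\mu_{i-1}$, $\lambda_i$, $\mu_i$) by a reduced expression that realizes $\mu_i = \mu_{i-1} + w_i\lambda_i$ as a sum of minuscule Weyl-translates. Each such reduced expression produces a sub-triangulation whose dual graph is a web with trivalent vertices labelled by the triples of minuscule weights appearing along the expression. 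The resulting composite web $w(\vmu)$ has the fan $A(\vlambda,\vmu)$ as its dual diskoid together with auxiliary internal structure on each slice $T_i$; this auxiliary structure only contributes free projective-space factors, so the associated configuration space remains an iterated twisted product of the smooth pieces from \lem{l:smooth}.

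Next I would prove the $\SL(n)$ analog of \thm{th:extend}: any fan configuration $f \in Q(A(\vlambda,\vmu))$ extends uniquely to $f \in Q(D(\vlambda,\vmu))$. The proof of \thm{th:extend} only used two building-theoretic inputs: the $\CAT(0)$ property of $\Delta(\PGL(n))$, and the fact that every geodesic from the base to a boundary vertex is determined inside any apartment containing its endpoints. Both hold for all affine buildings by \thm{th:cat0}, so the same apartment-by-apartment argument applies verbatim once I verify that the sub-triangulation of each slice $T_i$ is itself a $\CAT(0)$ complex whose interior edges lie on a complete geodesic from the base --- which is automatic from the construction via a reduced expression, since each interior triangle records a step in an expression for a single element of $W/W(\lambda_i)$ that is a geodesic in the Hasse diagram.

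With extension in hand, the unitriangularity statement is then formal: I would repeat the Morse-decomposition computation of \thm{th:homclass2} to write $\Psi(w(\vmu)) = \pi_*(c(w(\vmu)))$, then reproduce \lem{l:coeff2} (if the coefficient of $[\bar{Q(A(\vlambda,\vnu))}]$ is nonzero then $\pi(Q(D))$ meets that fan, forcing $\nu_i \le \mu_i$ for all $i$ by comparing radii of boundary vertices) and \lem{l:coeff1} (extension plus the scheme-theoretic injectivity lemma give a reduced component $Z \subseteq Q(D(\vlambda,\vmu))$ of the expected dimension that maps with degree $1$ to $\bar{Q(A(\vlambda,\vmu))}$, so the diagonal coefficient is $1$). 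Unitriangularity relative to the partial order on paths then implies linear independence of the $\Psi(w(\vmu))$, and since the number of minuscule paths equals $\dim \Inv(V(\vlambda))$ by \thm{th:haines}, they form a basis.

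The main obstacle, and the step where genuinely new work is required, is the \emph{canonicity} of the construction $\vmu \mapsto w(\vmu)$: a priori the web depends on the choice of reduced expressions on each slice, and I need to show either that different choices give the same web in $\fsp(\SL(n))$ (for instance, by checking that any two reduced expressions differ by braid moves whose dual effect on the web is already a relation in the conjectural $\SL(n)$ spider of \cite{Morrison:diagram}) or, more modestly, that they give webs with the same invariant vector --- which is enough for the basis statement but weaker than producing a canonical web. Comparing this construction to the combinatorial models of basis webs in the literature (growth rules, Fomin growth diagrams, or the Westbury-type constructions for the other rank-$2$ spiders) is the natural route, and isolating the correct choice of reduced expression is where I expect most of the technical difficulty to lie.
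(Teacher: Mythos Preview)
The paper does not prove this theorem: it appears in the Future Work section and is announced as a result the first author will establish elsewhere. The one-paragraph sketch there says only that the webs $w(\vmu)$ are built combinatorially via Westbury triangles \cite{Westbury:so7,Westbury:general}, and that the basis and unitriangularity claims then follow from the geometric machinery of the present paper.

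Your geometric half matches this sketch: invoking \thm{th:homclass2}, \lem{l:coeff1}, and \lem{l:coeff2} is exactly the intended route. Your combinatorial half differs. Rather than Westbury's explicit construction, you propose triangulating each fan slice $T_i$ via ``a reduced expression that realizes $\mu_i = \mu_{i-1} + w_i\lambda_i$''. As written this is too vague to evaluate: it is not clear what object the reduced expression is an expression for, nor how it produces a sub-triangulation all of whose edges are minuscule and all of whose dual vertices are trivalent. The paper singles out the construction of $w(\vmu)$ as the genuinely new ingredient and points to a specific solution in the literature; you are proposing to reinvent it, and the sketch you give does not yet pin down a web.

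Your extension argument also understates what is needed. The proof of \thm{th:extend} does use general building facts, but to show that the extension respects \emph{every} edge of $D$ it invokes \lem{l:everyedge}, which in turn rests on \thm{th:young} --- an $A_2$-specific structure theorem for geodesics in $\CAT(0)$ diskoids. Your replacement hypothesis, that every interior edge of your $D(\vlambda,\vmu)$ lies on a complete geodesic from the base, would indeed suffice and would bypass \thm{th:young} entirely; but verifying it is precisely a statement about the combinatorics of the triangulation you have not yet specified. In short, both gaps collapse to the same missing piece: a concrete construction of $D(\vlambda,\vmu)$ with the right geodesic structure, which is what the Westbury-triangle machinery is meant to supply.
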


The geometric results of the current article are used to establish that
the webs $w(\vmu)$ form a basis and as far as we are aware no elementary
proof is available.  This is in sharp contrast to the $\SL(3)$ case
where the basis webs were originally established by elementary means.
The webs $w(\vmu)$ themselves are constructed combinatorially using the
idea of Westbury triangles \cite{Westbury:so7} and \cite{Westbury:general}.
Recently, Westbury has combinatorially obtained \thm{th:westbury} for
the case of a tensor product of standard representation and their duals.

Kim \cite{Kim:thesis} (for $n=4$) and Morrison \cite{Morrison:diagram}
(for general $n$) conjecture a set of generating relations for kernel of
$\fsp(\SL(n)) \to \rep(\SL(n))_\min$.  Using \thm{th:westbury}, we hope
to establish Kim's and Morrison's conjectures.

\subsection{Other rank 2 groups}

Since there are established definitions of spiders for $B_2$ and $G_2$,
it seems quite possible that the results in this paper could be generalized
to these two cases, but there are two important problems to resolve.  First,
the vertex set of the corresponding affine buildings are no longer simply the
points of the affine Grassmannian. Second, since we want to study $\rep(G)$
rather than $\rep(G)_\min$, it is necessary to look at webs labelled not just
by minuscule weights but by fundamental weights. When $G$ is not $\SL(n)$,
it is no longer the case that all fundamental weights are minuscule;
thus the results of this paper would need to be extended to cover this case.

\subsection{Other discrete valuation rings}

Our results in this article apply only to the affine Grassmannians of the
discrete valuation ring $\cO = \C[[t]]$.  In fact, the affine Grassmannian
$\Gr$ exists (as a set) and Bruhat-Tits building $\Delta$ exists and is
$\CAT(0)$ for any complete discrete valuation ring $\cO$.  It is a well-known
open problem to state and prove a geometric Satake correspondence in this
setting, it is only known in the equal characteristic case $\cO = k[[t]]$
for a field $k$.  Since the building geometry is so similar for all choices
of $\cO$, our results could be interpreted as (further) evidence that a
geometric Satake correspondence exists for all $\cO$.

\subsection{Webs in surfaces}

Another possible generalization is from webs in disks to webs in surfaces.
If $\Sigma$ is a closed surface and $G$ has rank 1 or 2, there is an
analogous basis of non-elliptic webs on $\Sigma$ \cite{SK:confluence}, which
are equivalent to $\CAT(0)$ triangulations.  (Or, $\Sigma$ can have boundary
circles with marked points, but the closed case is especially interesting.)
This web basis is a basis of the skein module of $\Sigma \times [0,1]$, which
is also the coordinate ring of the variety of representations $\pi_1(\Sigma)
\to G$.  Our results suggest an interpretation of this coordinate ring in
terms of certain simplicial maps from the universal cover of $\Sigma$ to
the affine building $\Delta$.  This should be related to the conjectures
of Fock-Goncharov \cite{FG:Teichmuller}.

\subsection{Categorification}

We would also like to apply our results to categorification and knot
homology.  According to the philosophy of \cite{CK:coherent2}, to each web
$w$ with dual diskoid $D$ and boundary $\vlambda$, we should associate an
object $A(w)$ in the derived category of coherent sheaves on $\Gr(\vlambda)$.
When the configuration space $Q(D)$ has the expected dimension, then $A(w)$
should be the pushforward $\pi_*(\cO_{Q(D)})$ of the structure sheaf
of $Q(D)$.  It would also be nice to understand foams (as introduced by
Khovanov \cite{Khovanov:sl3}) in this language.   In particular, it would
be interesting to consider the configuration spaces of duals of foams.
Some ideas in this direction have been pursued by Frohman.

\subsection{Quantum groups}

Finally, developing a $q$-analogue of our theory is also an open problem.
As mentioned earlier there is a functor from the free spider $\fsp(G)$ to
$\rep_q(G)$, the representation category of the quantum group for any $q$
not a root of unity.  However, our geometric Satake machinery only applies
in the case when $q = 1$.  Hopefully, we can extend to general $q$ using
the quantum geometric Satake developed by Gaitsgory \cite{Gaitsgory:twisted}.


\providecommand{\bysame}{\leavevmode\hbox to3em{\hrulefill}\thinspace}
\providecommand{\MR}{\relax\ifhmode\unskip\space\fi MR }
\providecommand{\MRhref}[2]{%
  \href{http://www.ams.org/mathscinet-getitem?mr=#1}{#2}
}
\providecommand{\href}[2]{#2}

\end{document}